\newtheorem{theorem}{Theorem}[section]
\newtheorem{lemma}[theorem]{Lemma}
\newtheorem{corollary}[theorem]{Corollary}
\newtheorem{proposition}[theorem]{Proposition}
\theoremstyle{definition}
\newtheorem{definition}[theorem]{Definition}
\newtheorem{example}[theorem]{Example}
\theoremstyle{remark}
\newtheorem{remark}[theorem]{Remark}
\newtheorem{exercise}[theorem]{Exercise}
\newtheorem{conjecture}[theorem]{Conjecture}
\newtheorem{question}[theorem]{Question}
\numberwithin{equation}{section}
\newcommand{\op}[1]{\operatorname{#1}}
\newcommand{\GL}{\operatorname{GL}}
\newcommand{\SL}{\operatorname{SL}}
\newcommand{\Gr}{\operatorname{Gr}}
\newcommand{\Fl}{\operatorname{Fl}}
\newcommand{\Rep}{\operatorname{Rep}}
\newcommand{\Hom}{\operatorname{Hom}}
\newcommand{\PHom}{\operatorname{PHom}}
\newcommand{\Aut}{\operatorname{Aut}}
\newcommand{\Ext}{\operatorname{Ext}}
\newcommand{\ext}{\operatorname{ext}}
\newcommand{\e}{{\sf e}}
\newcommand{\f}{{\sf f}}
\newcommand{\g}{{\sf g}}
\newcommand{\h}{{\sf h}}
\newcommand{\ep}{{\epsilon}}
\newcommand{\vep}{{\varepsilon}}
\newcommand{\Tr}{\operatorname{Tr}}
\newcommand{\Ker}{\operatorname{Ker}}
\newcommand{\Coker}{\operatorname{Coker}}
\newcommand{\Img}{\operatorname{Im}}
\newcommand{\QF}{\op{QF}}
\newcommand{\T}{\operatorname{T}}
\newcommand{\SI}{\operatorname{SI}}
\newcommand{\res}{\operatorname{res}}
\newcommand{\ind}{\operatorname{ind}}
\newcommand{\proj}{\operatorname{proj}\text{-}}
\newcommand{\ckQ}{\widehat{k\Delta}}
\renewcommand{\hat}[1]{\widehat{#1}}
\renewcommand{\ll}{\text{\rm \l}}
\newcommand{\pp}{\text{\rm p}}
\renewcommand{\S}{{\bf S}}
\newcommand{\mc}[1]{\mathcal{#1}}
\newcommand{\mb}[1]{\mathbb{#1}}
\newcommand{\bs}[1]{\boldsymbol{#1}}
\renewcommand{\b}[1]{\bold{#1}}
\newcommand{\cone}{\mb{R}_+\Sigma_{\beta}(Q)}
\newcommand{\br}[1]{\overline{#1}}
\newcommand{\innerprod}[1]{\langle#1\rangle}
\newcommand{\dimbar}{\underline{\dim}}
\newcommand{\sm}[1]{\left(\begin{smallmatrix}#1\end{smallmatrix}\right)}
\renewcommand{\L}{{\mb{L}}}
\newcommand{\M}{\mb{M}}
\newcommand{\uca}{\br{\mc{C}}}
\newcommand{\Deltauv}{\Delta_{\b{v}}^{\b{u}}}
\begin{document}

\title{Cluster Algebras and Semi-invariant Rings II. Projections}
\author{Jiarui Fei}
\address{National Center for Theoretical Sciences, Taipei 30043, Taiwan}
\email{jrfei@ncts.ntu.edu.tw}
\thanks{}

\subjclass[2010]{Primary 13F60, 16G20; Secondary 13A50, 52B20}

\date{}
\keywords{Cluster Algebra, Semi-invariant Ring, Quiver Representation, Graded Upper Cluster Algebra, Quiver with Potential,
Weight Restriction, Orthogonal Projection, Vertex Removal, Exceptional Sequence, Cluster Character, Polytope, Lattice Point}

\begin{abstract} Let $\SI_\beta(Q)$ be the semi-invariant ring of $\beta$-dimensional representations of a quiver $Q$.
Suppose that $(Q,\beta)$ projects to another quiver with dimension vector $(Q',\beta')$ through an exceptional representation $E$.
We show that if $\SI_\beta(Q)$ is the upper cluster algebra associated to an ice quiver $\Delta$,
then $\SI_{\beta'}(Q')$ is the upper cluster algebra associated to $\Delta'$,
where $\Delta'$ is obtained from $\Delta$ through simple operations depending on $E$.
We also study the relation of their bases using the quiver with potential model.
\end{abstract}

\maketitle

\section*{Introduction}

The {\em cluster algebra} of Fomin-Zelevinsky was turned out be ubiquitous in algebraic Lie theory and classical invariant theory.
It had been realized under different motivation (eg., \cite{FG,Fk1}) that the {\em upper cluster algebra} \cite{BFZ} is a more useful notion than the cluster algebra.
The purpose of this series of papers is to understand when a semi-invariant of quiver representations is an upper cluster algebra, and use the cluster structure solve problems in invariant theory.

One family of such upper cluster algebras found in \cite{Fs1} is the semi-invariant ring of complete {\em triple flags} introduced in \cite{DW1}.
They are {\em multigraded} upper cluster algebras of the {\em hive quivers}. The grading is also very useful because the dimension of each graded piece counts certain {\em Littlewood-Richardson coefficient}.
On the other hand, many quivers can be projected from the triple flag quivers through exceptional sequences.
In fact, the author does not know any quiver without oriented cycles that cannot be projected from a triple flag quiver.
As we shall see in this paper that finding the cluster structure of semi-invariant ring of their representations will be ultimately reduced to the case of complete triple flags.

More naturally and generally, one can ask how a graded upper cluster algebra (or GUCA in short) might change into another one under degree (or weight) restriction.
We are unable to answer this question in general.
However, if the grading is restricted to the hyperplane $\theta^\perp$ cut out by some weight $\theta$,
then we can provide a complete solution under some additional conditions.

There is one special case which is particularly interesting to us.
That is when $\theta=\ep$ is a {\em projector} in the graded upper cluster algebra $\mc{A}$.
Here, being a projector simply means that $\ep$ is {\em real} and $(\mc{A}_e)_{\ep^\perp}=\mc{A}_{\ep^\perp}$ for some $e\in \mc{A}_{\ep}$.
In this case, the restriction leads to deleting some vertex.
For the general cases, we need more restriction on the weights $\bs{\sigma}$ of an extended cluster.
In general, we need to delete a set of frozen vertices, then freeze another set of vertices.
Let $\Delta$ be an ice quiver, and $\uca(\Delta)$ be its associated upper cluster algebra.
Let $\b{v}$ be a set of vertices of $\Delta$.
We use the notation $\Delta_{\b{v}}$ and $\Delta^{\b{v}}$ for the quiver obtained from $\Delta$ by deleting and freezing $\b{v}$ respectively.

Let $Q=(Q_0,Q_1)$ be a finite quiver without oriented cycles, and $\beta$ be a fixed dimension vector of $Q$.
The space $\Rep_\beta(Q)$ of all $\beta$-dimensional representations is acted by
the product of special linear groups $\SL_\beta:=\prod_{v\in Q_0}\SL_{\beta(v)}$ by the natural base change.
The rings of semi-invariants $\SI_\beta(Q)$ is by definition the invariant ring $k[\Rep_\beta(Q)]^{\SL_\beta}$.
This algebra is graded by a sublattice of $\mb{Z}^{Q_0}$ equipped with a bilinear form $\innerprod{-,-}$.
We denote by $\SI_\beta(Q)_\sigma$ the weight-$\sigma$ component of $\SI_\beta(Q)$.

The weight restriction to $\ep^\perp:=\{\sigma\mid \innerprod{\ep,\sigma}=0\}$ applied to $\SI_\beta(Q)$ leads to a change of the quiver $Q$ and the dimension vector $\beta$.
This was studied by A. Schofield \cite{S1} and Derksen-Weyman \cite{DW2}.
To be more precise, we have that $\bigoplus_{\sigma\in\ep^\perp} \SI_\beta(Q)_{\sigma} \cong \SI_{\beta_\ep}(Q_\ep)$,
where $Q_{\ep}$ is another quiver with one vertex less than $Q$, and $\beta_\ep$ is a dimension vector of $Q_\ep$.
The general weight restriction does not necessarily lead to such a change.
However, it does in another special case called {\em vertex removal} with an additional condition \eqref{eq:beta_r} on $\beta$.
We denote the new quiver with dimension vector in this situation by $(\br{Q},\br{\beta})$.
Put all these together, and we obtain our first two main results.

\begin{theorem}[Theorem \ref{T:invGUCA}] Suppose that $\SI_\beta(Q)$ is a naturally graded UCA $\uca(\Delta,\b{x}; \bs{\sigma})$, and $B(\Delta)$ has full rank.
If $\bs{\sigma}(e)=\ep$ is real and extremal in the grading cone of $\SI_\beta(Q)$, then
we have that $\SI_{\beta_\ep}(Q_\ep)\cong \uca(\Delta_e,\b{x}_e;\bs{\sigma}_e)$.
\end{theorem}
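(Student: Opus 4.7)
The plan is to factor the desired isomorphism through a two-step reduction. First, apply the Schofield-Derksen-Weyman reflection recalled in the introduction to obtain
\[
\SI_{\beta_\ep}(Q_\ep)\;\cong\;\bigoplus_{\sigma\in\ep^\perp}\SI_\beta(Q)_\sigma\;=\;\SI_\beta(Q)_{\ep^\perp}.
\]
Under the hypothesis $\SI_\beta(Q)=\uca(\Delta,\b{x};\bs{\sigma})$, the theorem then reduces to the purely cluster-algebraic identification
\[
\uca(\Delta,\b{x};\bs{\sigma})_{\ep^\perp}\;\cong\;\uca(\Delta_e,\b{x}_e;\bs{\sigma}_e).
\]
This is the step where the projector conditions on $\ep$ (real and extremal) do the work, and I would extract it as a general lemma on graded UCAs whose sketch follows.

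For the cluster-algebraic identification I would build mutually inverse maps. In one direction, since $e$ is frozen, the specialization $\pi\colon \uca(\Delta)\to \uca(\Delta_e)$ sending $x_e\mapsto 1$ and fixing all other cluster variables is a well-defined algebra homomorphism: mutation at a non-$e$ vertex in $\Delta_e$ lifts to the same mutation in $\Delta$ up to monomial factors in $x_e$, and $\pi$ intertwines them. In the other direction, each cluster variable of $\Delta_e$ in an arbitrary seed is lifted back to $\uca(\Delta)$ by multiplying the evident $\Delta$-lift by the unique power of $x_e$ that places it in weight $\ep^\perp$; the passage from $\bs{\sigma}$ to $\bs{\sigma}_e$ records precisely this rescaling. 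Full rank of $B(\Delta)$ implies that $B(\Delta_e)$ still carries enough information for these lifts to be coherent across seeds, and for the Laurent phenomenon in $\uca(\Delta_e)$ to transfer to Laurent expansions in $\uca(\Delta)_{\ep^\perp}$.

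The main obstacle is verifying that this lifting is globally consistent, so that the inverse of $\pi\vert_{\uca(\Delta)_{\ep^\perp}}$ is defined as an algebra map and not only on a generating set. For this I would invoke extremality of $\ep$: it bounds the $\mb{Z}$-grading $\innerprod{\ep,-}$ below and so makes each $\ep^\perp$-weight component finitely generated as a module over the polynomial ring in $x_e$, forcing the lift of a given weight-$\sigma$ element to be uniquely determined once its $\pi$-image is fixed. Realness of $\ep$ is then used to identify the cluster variable $x_e$ with a distinguished semi-invariant in $\SI_\beta(Q)_\ep$ that plays well with mutation. With uniqueness of the lift in hand, compatibility with all mutation relations of $\Delta_e$ is automatic because mutations are Laurent-polynomial identities that the lift preserves by construction. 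Combining this identification with Schofield-Derksen-Weyman then delivers $\SI_{\beta_\ep}(Q_\ep)\cong \uca(\Delta_e,\b{x}_e;\bs{\sigma}_e)$.
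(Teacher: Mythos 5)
Your overall decomposition is the same as the paper's: first Derksen--Weyman's identification $\SI_{\beta_\ep}(Q_\ep)\cong\SI_\beta(Q)_{\ep^\perp}$ (Theorem \ref{T:SIemb}), then an identification of the weight-restricted algebra with $\uca(\Delta_e,\b{x}_e;\bs{\sigma}_e)$. The gap is in the second step, which you present as a ``purely cluster-algebraic'' general lemma on graded UCAs. It is not. What cluster combinatorics alone gives (Proposition \ref{P:subalgebra}, via the map $\ell_e(a)=ae^{-\innerprod{\ep,\alpha}}$, which is essentially your specialization up to renaming the new cluster variables) is only $(\mc{A}_e)_{\ep^\perp}\cong\uca(\Delta_e,\b{x}_e;\bs{\sigma}_e)$, i.e.\ \emph{after localizing at} $e$. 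Removing the localization is exactly the statement that $e$ is a projector: every $f\in\mc{A}_\alpha$ with $\innerprod{\ep,\alpha}=d>0$ must factor as $e^{d}f'$ with $f'\in\mc{A}_{\ep^\perp}$ (Lemma \ref{L:projector}). Your lifting direction runs into precisely this: the lift of a cluster variable of $\Delta_e$ is the $\Delta$-lift times $e^{-\innerprod{\ep,\bs{\sigma}(v)}}$, and when that exponent is negative the lift a priori lives only in $\mc{A}_e$ (coefficient variables are not inverted in this paper's $\mc{L}_{\b{x}}$). Your justification via extremality --- that it bounds the grading below and makes each component finitely generated over $k[x_e]$, ``forcing the lift to be uniquely determined'' --- addresses uniqueness of a lift, not existence of the factorization, which is the actual issue. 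Moreover, extremality of a real weight in the grading cone of an abstract graded UCA does \emph{not} imply the projector property (e.g.\ $k[u,v]$ with $u,v$ both of the same real extremal weight $\ep$ has $(A_u)_{\ep^\perp}\ni v/u\notin A_{\ep^\perp}=k$). The paper's Proposition \ref{P:extremal_proj} derives the projector property from genuinely representation-theoretic input: $\SI_\beta(Q)$ is a UFD, its weight spaces are spanned by Schofield semi-invariants $s(f)$, and a general-subrepresentation argument shows each such $s(f)$ of weight $\alpha$ with $\innerprod{\ep,\alpha}>0$ is divisible by the appropriate power of $s(f_E)$. Some such input must be supplied; it cannot be replaced by cluster mutation bookkeeping.

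Two smaller omissions. First, Theorem \ref{T:SIemb} requires $\ep$ to be a real \emph{Schur} root orthogonal to $\beta$; you apply it without verifying Schurness, which the paper extracts from the proof of Proposition \ref{P:extremal_proj} using Lemma \ref{L:exact} and extremality. Second, you assume throughout that $e$ is frozen, but the theorem allows $e$ to be mutable; the reduction to the frozen case is Lemma \ref{L:reduce2frozen}, and that reduction is where the full-rank hypothesis on $B(\Delta)$ is actually used.
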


\begin{theorem}[Theorem \ref{T:removal}] Suppose that $\SI_\beta(Q)$ is a naturally graded UCA $\uca(\Delta;\bs{\sigma})$, and $B(\Delta)$ has full rank.
Let $r$ be a vertex of $Q$ satisfying \eqref{eq:beta_r}
and that $\bs{\sigma}(v)(r) = 0$ for all $v\in \Delta_0$ except for a set of frozen vertices $\b{v}$, where all $\bs{\sigma}(v)(r)<0$ (or all $>0$).
Then we have that $\SI_{\br{\beta}}(\br{Q})\cong \uca(\Deltauv;\bs{\sigma}(\hat{\b{v}}))$, where $\b{u}$ is {\em attached} to $\b{v}$.
\end{theorem}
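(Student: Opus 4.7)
The plan is to combine two hyperplane-restriction results: one for semi-invariant rings under the classical vertex-removal construction, and one for graded upper cluster algebras under restriction of the grading to a coordinate hyperplane. Both sides of the desired isomorphism should be realized as the weight-$r^\perp$ component of $\SI_\beta(Q) \cong \uca(\Delta;\bs{\sigma})$.

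First, I would establish the semi-invariant side. Let $r^\perp = \{\sigma \in \mb{Z}^{Q_0} \mid \sigma(r) = 0\}$. Under the hypothesis \eqref{eq:beta_r} on $\beta$, the vertex-removal theorem of Schofield \cite{S1} and Derksen--Weyman \cite{DW2} gives a canonical identification
$$\bigoplus_{\sigma \in r^\perp} \SI_\beta(Q)_\sigma \;\cong\; \SI_{\br\beta}(\br Q).$$
This rewrites the left-hand side of the theorem as the hyperplane truncation of the graded UCA $\uca(\Delta;\bs{\sigma})$.

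Next I would analyze the truncation on the UCA side. The hypothesis says that the only extended cluster variables carrying nonzero weight at $r$ are the frozen $x_v$ for $v\in \b v$, and these weights all share a strict sign. For any Laurent monomial $\prod x_v^{a_v}$ appearing in an element of $\uca(\Delta;\bs{\sigma})$, the $r$-coordinate of its weight is $\sum_{v\in \b v} a_v \bs{\sigma}(v)(r)$; by the same-sign hypothesis, this vanishes only when $a_v = 0$ for every $v\in \b v$. Consequently the hyperplane $r^\perp$-component is precisely the subalgebra of $\uca(\Delta;\bs{\sigma})$ generated by elements involving no $x_v$ with $v\in \b v$. Deleting $\b v$ from $\Delta$ eliminates these frozen variables, and freezing the attached set $\b u$ is forced: any mutation at $u\in\b u$ would invoke an exchange relation containing some $x_v$ with $v\in\b v$, hence would escape the hyperplane. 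This matches exactly the combinatorial modification $\Delta \mapsto \Deltauv$, with induced grading $\bs{\sigma}(\hat{\b v})$ (the restriction of $\bs{\sigma}$ to the remaining extended cluster).

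The main obstacle is the rigorous identification of this truncation with the full UCA $\uca(\Deltauv;\bs{\sigma}(\hat{\b v}))$, rather than merely with some subalgebra of it. For this one must (a) verify that every cluster of $\Deltauv$ arises from the original clusters by restricting along the hyperplane, so that no Laurent expansions are lost; (b) check that the full-rank hypothesis on $B(\Delta)$ descends to full rank of $B(\Deltauv)$, which is needed to apply the standard UCA intersection formula $\uca = \bigcap_{\text{clusters}} \L$; and (c) invoke the companion general result for weight restrictions of GUCAs — the analogue of Theorem \ref{T:invGUCA} for non-projector weights — which I expect is established earlier in the paper. Once this general GUCA truncation theorem is in hand, combining it with the vertex-removal identification of the first paragraph immediately yields $\SI_{\br\beta}(\br Q) \cong \uca(\Deltauv;\bs{\sigma}(\hat{\b v}))$.
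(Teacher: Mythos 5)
Your proposal is correct and follows essentially the same route as the paper: the paper's proof is precisely the combination of Lemma \ref{L:projSI} (your first paragraph, identifying $\SI_{\br{\beta}}(\br{Q})$ with the $r^\perp$-truncation of $\SI_\beta(Q)$) with Proposition \ref{P:outerproj} (the "companion general result" you anticipate, which carries out the sign/attachment argument you sketch and establishes the identification with the full UCA $\uca(\Deltauv;\bs{\sigma}(\hat{\b{v}}))$). The only minor inaccuracy is your step (b): the paper does not need $B(\Deltauv)$ to have full rank, only $B(\Delta)$, since the reverse inclusion is proved directly against all clusters of $\Deltauv$ and the forward inclusion uses Theorem \ref{T:bounds} for $\Delta$.
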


In \cite{DWZ1} and \cite{DWZ2}, the {\em mutation} of {\em quivers with potentials} is invented to model the cluster algebras.
For an ice quiver with potential $(\Delta,W)$, we can associated to it the {\em Jacobian algebra} $J(\Delta,W)$.
Let $G(\Delta,W)$ be the set of all {\em $\mu$-supported vectors} in $K_0(\proj J(\Delta,W))$ introduced in \cite{Fs1}.
We say an IQP $(\Delta,W)$ {\em models} an algebra $\mc{A}$ if the {\em generic cluster character} of \cite{P} maps $G(\Delta,W)$ onto a basis of $\mc{A}$.
Our other two main results say that the model is in some sense hereditary under the two operations considered before.

\begin{theorem}[Theorem \ref{T:frozen_model}] Suppose that $e$ is a projector in $\uca(\Delta;\bs{\sigma})$. If $(\Delta,W)$ models $\uca(\Delta;\bs{\sigma})$, then $(\Delta_e,W_e)$ models $\uca(\Delta_e;\bs{\sigma}_e)$.
\end{theorem}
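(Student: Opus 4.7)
The plan is to transport the modeling property along the identification $\uca(\Delta_e;\bs{\sigma}_e) \cong \uca(\Delta;\bs{\sigma})_{\ep^\perp}$ provided by Theorem~\ref{T:invGUCA}, where $\ep := \bs{\sigma}(e)$. Since the generic cluster character $C_?$ is multigraded, a basis for the $\ep^\perp$-graded piece of $\uca(\Delta;\bs{\sigma})$ is indexed by the subset $G_{\ep^\perp}(\Delta,W) := \{\delta \in G(\Delta,W) : \bs{\sigma}(\delta) \in \ep^\perp\}$. The heart of the proof is thus to identify $G(\Delta_e,W_e)$ with $G_{\ep^\perp}(\Delta,W)$, and to check that the two generic cluster characters agree under this identification.

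For the set-level identification, I would use that the projector vertex $e$ is necessarily frozen (as forced by $\ep$ being extremal in the grading cone), so that deleting $e$ induces an inclusion $K_0(\proj J(\Delta_e,W_e)) \hookrightarrow K_0(\proj J(\Delta,W))$ onto the hyperplane $\{\delta(e)=0\}$. The projector hypothesis forces any $\delta \in G_{\ep^\perp}(\Delta,W)$ to satisfy $\delta(e)=0$: otherwise $\bs{\sigma}(\delta)$ would pick up a nonzero $\ep$-component, contradicting the extremality of $\ep$. Conversely, starting from $\delta \in G(\Delta_e,W_e)$, the extension by zero at $e$ should lie in $G_{\ep^\perp}(\Delta,W)$, using that any mutation sequence in $\Delta_e$ lifts to one in $\Delta$ that leaves the frozen vertex $e$ untouched, and that QP mutation for $(\Delta,W)$ along such sequences restricts to QP mutation for $(\Delta_e,W_e)$.

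Matching the generic cluster characters is then essentially formal: for $\delta$ with $\delta(e)=0$, the cluster variable $x_e$ does not enter the monomial prefactor $\b{x}^{-\delta}$, and the quiver Grassmannians of the generic $\delta$-dimensional $J(\Delta,W)$-representation coincide with those of the generic $J(\Delta_e,W_e)$-representation (the $e$-component of the generic representation being trivial). Combining this with the set-level bijection yields that $C_?$ sends $G(\Delta_e,W_e)$ bijectively onto a basis of $\uca(\Delta_e;\bs{\sigma}_e)$, as required. The main obstacle I anticipate is the transfer of $\mu$-supportedness: while the weight argument fixes the $e$-coordinate at zero, one must still verify that each individual mutation of $(\Delta,W)$ along a vertex distinct from $e$ corresponds to the analogous mutation of $(\Delta_e,W_e)$, and that the premutation and reduction steps of the QP mutation commute with the deletion of $e$ at each stage — this is where the projector condition should play a role beyond the pure weight restriction.
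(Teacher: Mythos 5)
Your high-level strategy (transport the basis along $\uca(\Delta_e;\bs{\sigma}_e)\cong\uca(\Delta;\bs{\sigma})_{\ep^\perp}$ and match up the $\mu$-supported $\g$-vectors) is the same as the paper's, but the central technical step of your argument is wrong. You claim that any $\delta\in G(\Delta,W)$ whose weight lies in $\ep^\perp$ must have $\delta(e)=0$. This is false: the weight of $C_W(\delta)$ is $\delta\bs{\sigma}=\sum_v\delta(v)\bs{\sigma}(v)$, and the weights $\bs{\sigma}(v)$ for $v\neq e$ are in general not orthogonal to $\ep$, so the condition $\innerprod{\ep,\delta\bs{\sigma}}=0$ defines the hyperplane $\Ker\ep_l=\{\g:\g(e)=-\sum_{v\neq e}\g(v)\innerprod{\ep,\bs{\sigma}(v)}\}$, not the coordinate hyperplane $\{\g(e)=0\}$. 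Extremality of $\ep$ in the weight cone says nothing about the $e$-coordinate of a $\g$-vector. Correspondingly, the correct correspondence between $G(\Delta_e,W_e)$ and $\g$-vectors for $\Delta$ is not extension by zero but the maps $\ll_e$ (forget the $e$-th coordinate) and $\i_e:\e_v\mapsto\e_v-\innerprod{\ep,\bs{\sigma}(v)}\e_e$; and the matching of characters is not the "formal" identity you describe, because the cluster variables of $\Delta_e$ are $\b{x}_e(v)=e^{-\innerprod{\ep,\bs{\sigma}(v)}}\b{x}(v)$, not $\b{x}(v)$. The paper handles this by the homomorphism $\ell_e$ (division by powers of $e$) together with the induction functor $\ind_e=-\otimes_J J_e$ and the commutative diagram $\ell_e C=C\ind_e$ (Lemma \ref{L:commute_e}), which also uses $B(\Delta_e)I_e=B(\Delta)$ to match the $y$-variables.

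A second gap: you assert that the projector vertex $e$ is "necessarily frozen", but nothing forces this, and the theorem does not assume it. The paper spends half of its proof on the mutable case, which is genuinely delicate: when $e$ is mutable, a $\mu$-supported $\g\in G(\Delta,W)$ may have $\Coker(\g)$ supported on $e$, so it does not directly give an element of $G(\Delta^e,W)$; the paper replaces $\g$ by $\g+m\e_e$ for $m\gg0$ (which kills the support at $e$ without changing the $\g$-vector of $\ell_e(C_W(\g))$) and then reduces to the frozen case via Lemma \ref{L:reduce2frozen}. Your proposal has no mechanism for this step. Your remarks about lifting mutation sequences and commuting QP-mutation with deletion are also off-track for this proof: the paper's argument uses no QP mutation here, only restriction/induction along the surjection $J(\Delta,W)\twoheadrightarrow J(\Delta_e,W_e)$.
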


\begin{theorem}[Theorem \ref{T:hat}] Under the same assumption as Proposition \ref{P:outerproj},
we suppose that $(\Delta,W)$ models $\uca(\Delta;\bs{\sigma})$.
Then $(\Deltauv,W_{\b{v}})$ models $\uca(\Deltauv;\bs{\sigma}(\hat{\b{v}}))$.
\end{theorem}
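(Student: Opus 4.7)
The plan is to proceed in parallel with Theorem \ref{T:frozen_model} while invoking Theorem \ref{T:removal} to identify the target algebra. Since $(\Delta,W)$ models $\uca(\Delta;\bs{\sigma})$, the generic cluster character $CC$ sends $G(\Delta,W)$ bijectively onto a basis of $\uca(\Delta;\bs{\sigma})$. The goal is to intersect this bijection with the $\hat{\b{v}}^\perp$-graded slice and match the resulting set with $G(\Deltauv,W_{\b{v}})$ under the cluster character $CC'$ of the new IQP.

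First I would invoke Proposition \ref{P:outerproj} together with Theorem \ref{T:removal} to identify $\bigoplus_{\sigma\in\hat{\b{v}}^\perp}\uca(\Delta;\bs{\sigma})_{\sigma}$ with $\uca(\Deltauv;\bs{\sigma}(\hat{\b{v}}))$, thereby pinning down the codomain. Next I would analyze the Jacobian algebras: because $\b{v}$ lies in the frozen part of $\Delta$, deleting it amounts to restricting decorated representations to the complementary full subquiver, and freezing the additional vertices $\b{u}$ merely disables mutation there without altering the representation variety. This yields a candidate bijection between decorated representations of $J(\Delta,W)$ whose weight lies in $\hat{\b{v}}^\perp$ and decorated representations of $J(\Deltauv,W_{\b{v}})$.

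The third step is to verify that the two generic cluster characters intertwine this bijection. The F-polynomials, defined via quiver Grassmannians of the Jacobian modules, are compatible because deleting the frozen set $\b{v}$ only specializes the corresponding $y$-variables, while the $\g$-vectors transform under the natural projection $K_0(\op{proj}J(\Delta,W))\to K_0(\op{proj}J(\Deltauv,W_{\b{v}}))$ in the expected way. Together these imply that $CC'(\g')$ equals the restriction of $CC(\g)$ to the graded slice, and combined with the identification from Step 1, this produces the desired basis of $\uca(\Deltauv;\bs{\sigma}(\hat{\b{v}}))$.

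The main obstacle is showing that $\mu$-supportedness is preserved and reflected under the bijection. Mutations at vertices of $\b{u}$ are prohibited in $\Deltauv$ but permitted in $\Delta$, so I must argue that nothing is lost when one restricts the allowed mutation sequences. The attachment condition from Proposition \ref{P:outerproj} should force every mutation in $\Delta$ that crosses $\b{u}$ to be realizable, up to a weight-preserving modification, by a sequence living entirely in $\Deltauv$; this reflection argument is the analog of the projector step used in Theorem \ref{T:frozen_model} and is where the attachment hypothesis plays the essential role. Once this is established, the maps in both directions are cluster-character-equivariant and the theorem follows.
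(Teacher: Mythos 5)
Your Steps 1--3 follow the paper's intended route. Proposition \ref{P:outerproj} identifies $\mc{A}_{\theta^\perp}$ with $\uca(\Deltauv;\bs{\sigma}(\hat{\b{v}}))$ (Theorem \ref{T:removal} is not needed here --- that is the semi-invariant-ring specialization, whereas Theorem \ref{T:hat} is stated purely for upper cluster algebras), and the paper's argument is exactly the commutative square $p_{\b{v}}\circ C = C\circ \res_{\b{v}}$, where $\res_{\b{v}}$ restricts a decorated representation to $\Delta_0\setminus\b{v}$ and $p_{\b{v}}$ evaluates $\b{x}(\b{v})$ at $0$; distinctness of the resulting $\g$-vectors is Corollary \ref{C:Gprojv}.

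The last step is where your argument goes wrong. You frame the remaining difficulty as showing that ``nothing is lost when one restricts the allowed mutation sequences'' at $\b{u}$, and you propose to realize mutations of $\Delta$ crossing $\b{u}$ by weight-preserving sequences inside $\Deltauv$. But $\mu$-supportedness (Definitions \ref{D:mu_suprep} and \ref{D:mu_supg}) is a condition on the support of the module $\Coker(\g)$, and neither $G(\Delta,W)$ nor the generic character $C_W$ is defined in terms of mutation sequences, so no reflection-of-mutations argument is relevant or available. What actually has to be checked is that restriction and zero-extension of decorated representations identify the $\mu$-supported $\g$-vectors on the two sides, i.e.\ that $\pp_{\b{v}}$ carries the part of $G(\Delta,W)$ lying over $\theta^\perp$ onto $G(\Deltauv,W_{\b{v}})$ --- a statement about supports of generic cokernels, where the new constraint is that the cokernel must also vanish on $\b{u}$, since $\b{u}$ becomes frozen in $\Deltauv$. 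Relatedly, your claim that this step is ``the analog of the projector step used in Theorem \ref{T:frozen_model}'' is backwards: the projector hypothesis there is needed to upgrade $(\mc{A}_e)_{\ep^\perp}$ to $\mc{A}_{\ep^\perp}$, and no analog is needed here because Proposition \ref{P:outerproj} already gives $\mc{A}_{\theta^\perp}=\uca(\Deltauv;\bs{\sigma}(\hat{\b{v}}))$ on the nose --- which is precisely why the paper calls this case easier than the projection case.
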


\noindent If in addition $G(\Delta,W)$ is given by lattice points in some polyhedron, then so are $G(\Delta_e,W_e)$ and $G(\Deltauv,W_{\b{v}})$.

A comprehensive application of this theory has been illustrated in \cite{Fk1}.
To invite readers, we still provide many other examples.
The first class of examples explains how we get some known cluster structure from \cite{Fs1}.
They include the total coordinate rings of partial flag varieties and unipotent subgroups of $\GL_n$ \cite{GLSa}.
The second class of examples is mildly new.
They are the semi-invariant rings of complete $m$-tuple flags for $m\leq 6$, whose cluster structure can also be obtained from \cite{Fs1}.

\subsection*{Outline of the paper}
In Section \ref{S:GUCA} we recall the graded upper cluster algebra following \cite{BFZ,FZ4,FP}.
The new emphasis is on the grading given by a weight configuration of a unimodular lattice.
In Section \ref{S:Proj}, we introduce the crucial map $\ell_e$ and the definition of a projector.
We prove the precursors of the first two main results -- Proposition \ref{P:subalgebra} and \ref{P:outerproj}.
In Section \ref{S:SI}, we recall the theory of semi-invariant ring of quiver representations with an emphasis on the two operations -- orthogonal projection and vertex removal. We give a characterization of a projector in a semi-invariant ring in Proposition \ref{P:extremal_proj}.
We prove our first two main results -- Theorem \ref{T:invGUCA} and \ref{T:removal}, and provide many examples.
In Section \ref{S:CC}, we recall the ice quivers with potentials and their representations following \cite{DWZ1}.
We recall the generic cluster character in the current setting following \cite{P,Fs1}.
In Section \ref{S:model}, we use the restriction and induction functor to study the relation of the cluster models under the two operations.
We prove the other two main results -- Theorem \ref{T:frozen_model} and \ref{T:hat}, and provide examples.

\subsection*{Notations and Conventions}
Our vectors are exclusively row vectors. All modules are right modules.
For a quiver $Q$, we denote by $Q_0$ the set of vertices and by $Q_1$ the set of arrows.
For an arrow $a$, we denote by $t(a)$ and $h(a)$ its tail and head.
Arrows are composed from left to right, i.e., $ab$ is the path $\cdot \xrightarrow{a}\cdot \xrightarrow{b} \cdot$.
Unadorned $\Hom, \otimes$ and $\dim$ are all over the base field $k$. 
We write $\hom$ for $\dim \Hom$.
For any representation $M$, $\dimbar M$ is the dimension vector of $M$.
For direct sum of $n$ copies of $M$, we write $nM$ instead of the traditional $M^{\oplus n}$.

\section{Graded Upper Cluster Algebras} \label{S:GUCA}
\subsection{Cluster Algebras}
A quiver is just a directed graph with possibly multiple arrows between two vertices.
Throughout we assume that quivers have no loops or oriented 2-cycles.
An {\em ice} quiver $\Delta$ is a quiver, where some vertices are designated as {\em mutable} while the rest are designated {\em frozen}.
We usually label the quiver such that the first $p$ vertices are mutable.
If we require no arrows between frozen vertices, then
such a quiver is uniquely determined by its {\em $B$-matrix} $B(\Delta)$.
It is a $p\times q$ matrix given by
$$b_{u,v} = |\text{arrows }u\to v| - |\text{arrows }v \to u|.$$
The combinatorial data defining a cluster algebra is completely encoded in an ice quiver.

\begin{definition} \label{D:Qmu}
Let $u$ be a mutable vertex of $\Delta$.
The {\em quiver mutation} $\mu_u$ transforms $\Delta$ into the new quiver $\Delta'=\mu_u(\Delta)$ via a sequence of three steps.
\begin{enumerate}
\item For each pair of arrows $v\to u\to w$, introduce a new arrow $v\to w$ (unless both $v$ and $w$ are frozen, in which case do nothing);
\item Reverse the direction of all arrows incident to $u$;
\item Remove all oriented 2-cycles.
\end{enumerate}
\end{definition}

\noindent The above recipe can be reformulated in terms of $B$-matrix as follows.
Let $\phi$ be the $q\times q$ matrix obtained from the identity matrix by replacing the $u$-th row by a vector $\phi_u$ where $$\phi_u(u)=-1,\quad \text{and}\quad \phi_u(v)=|\text{arrows } v\to u |.$$
We write $\phi_{\op{p}}$ for the restriction of $\phi$ to its $p\times p$ upper left corner.
Then the mutated $B$-matrix $B'$ for $\Delta'$ is related to the original one by
\begin{equation} \label{eq:mu_B} B'=\phi_{\op{p}}^{\T} B\phi.
\end{equation}
We note that $\phi=\phi^{-1}$ and $\phi_{\op{p}}=\phi_{\op{p}}^{-1}$
so the quiver mutation is an {\em involution} and preserves the rank of the $B$-matrix.

\begin{definition} \label{D:seeds} 
Let $\mc{F}$ be a field containing $k$.
A {\em seed} in $\mc{F}$ is a pair $(\Delta,\b{x})$ consisting of an ice quiver $\Delta$ together with a collection $\b{x}=\{x_1,x_2,\dots,x_q\}$, called an {\em extended cluster}, consisting of algebraically independent (over $k$) elements of $\mc{F}$, one for each vertex of $\Delta$.
The elements of $\b{x}$ associated with the mutable vertices are called {\em cluster variables}; they form a {\em cluster}.
The elements associated with the frozen vertices are called
{\em frozen variables}, or {\em coefficient variables}.

A {\em seed mutation} $\mu_u$ at a (mutable) vertex $u$ transforms $(\Delta,\b{x})$ into the seed $(\Delta',\b{x}')=\mu_u(\Delta,\b{x})$ defined as follows.
The new quiver is $\Delta'=\mu_u(\Delta)$.
The new extended cluster is
$\b{x}'=\b{x}\cup\{x_{u}'\}\setminus\{x_u\}$
where the new cluster variable $x_u'$ replacing $x_u$ is determined by the {\em exchange relation}
\begin{equation*} \label{eq:exrel}
x_u\,x_u' = \prod_{v\rightarrow u} x_v + \prod_{u\rightarrow w} x_w.
\end{equation*}
\end{definition}

\noindent We note that the mutated seed $(\Delta',\b{x}')$ contains the same
coefficient variables as the original seed $(\Delta,\b{x})$.
It is easy to check that one can recover $(\Delta,\b{x})$
from $(\Delta',\b{x}')$ by performing a seed mutation again at $u$.
Two seeds $(\Delta,\b{x})$ and $(\Delta^\dag,\b{x}^\dag)$ that can be obtained from each other by a sequence of mutations are called {\em mutation-equivalent}, denoted by $(\Delta,\b{x})\sim (\Delta^\dag,\b{x}^\dag)$.
If $\Delta$ and $\Delta^\dag$ are clear from the context, we may just write $\b{x} \sim \b{x}^\dag$.

\begin{definition}[{Cluster algebra}] \label{D:cluster-algebra}
The {\em cluster algebra $\mc{C}(\Delta,\b{x})$} associated to a seed $(\Delta,\b{x})$ is defined as the subring of $\mc{F}$
generated by all elements of all extended clusters of the seeds mutation-equivalent to $(\Delta,\b{x})$.
\end{definition}

\noindent Note that the above construction of $\mc{C}(\Delta,\b{x})$ depends only, up to a natural isomorphism, on the mutation equivalence class of the initial quiver $\Delta$. So we may drop $\b{x}$ and simply write $\mc{C}(\Delta)$.

\subsection{Upper Cluster Algebras}

An amazing property of cluster algebras is
\begin{theorem}[{Laurent Phenomenon}, \textrm{\cite{FZ1,BFZ}}] \label{T:Laurent}
Any element of a cluster algebra $\mc{C}(\Delta,\b{x})$ can be expressed in terms of the
extended cluster $\b{x}$ as a Laurent polynomial, which is polynomial in coefficient variables.
\end{theorem}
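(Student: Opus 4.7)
The plan is to prove a slightly stronger statement by induction on the length of a mutation sequence from $(\Delta, \b{x})$: for every seed $(\Delta^\dag, \b{x}^\dag) \sim (\Delta, \b{x})$, each element $x^\dag \in \b{x}^\dag$ lies in the ring $\mb{Z}[x_1^{\pm 1}, \dots, x_p^{\pm 1}, x_{p+1}, \dots, x_q]$ (Laurent in cluster variables, polynomial in coefficient variables). Since the cluster algebra $\mc{C}(\Delta, \b{x})$ is generated by such $x^\dag$, the theorem follows. The base case (zero mutations) is immediate from algebraic independence.

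For the inductive step, I would introduce the \emph{upper bound} algebra associated to $(\Delta, \b{x})$,
\[
\mc{U}(\Delta, \b{x}) := \mb{Z}[\b{x}^{\pm 1}] \cap \bigcap_{u \text{ mutable}} \mb{Z}[\mu_u(\b{x})^{\pm 1}],
\]
and prove two assertions: (a) every cluster variable from a mutation-equivalent seed lies in $\mc{U}(\Delta, \b{x})$, and (b) in any such Laurent expansion, the coefficient variables never appear with negative exponent. Assertion (b) is a direct inspection of the exchange relation, since by Definition \ref{D:Qmu} no arrows between frozen vertices are produced and no frozen $x_v$ appears in the denominator of a newly-introduced $x_u'$.

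The heart of the argument is assertion (a), which I would establish via the \emph{caterpillar lemma}: if $x^\dag$ belongs to the Laurent polynomial ring of \emph{every} one-step neighbor of $(\Delta, \b{x})$, then it already belongs to $\mc{U}(\Delta, \b{x})$. To propagate Laurent-ness along a mutation path, one uses unique factorization in $\mb{Z}[\b{x}^{\pm 1}]$ together with the key coprimality fact: the two monomials in the exchange binomial $\prod_{v\to u} x_v + \prod_{u\to w}x_w$ have no common factor. This forces any expression that becomes Laurent after simultaneously inverting both $x_u$ and $x_u'$ to have been Laurent in $\b{x}$ to begin with, which is exactly what is needed to pull Laurent-ness back one mutation step. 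Starting the induction from the seed where $x^\dag$ is literally a cluster variable, one walks the mutation path in reverse, applying this pulling-back at each stage.

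The principal obstacle I expect is the bookkeeping in the caterpillar lemma when two consecutive mutations occur at distinct but adjacent vertices $u$ and $v$: one must verify that the exchange binomials at $u$, at $v$, and at the twice-mutated vertices interact in a compatible way, which requires explicit use of the $B$-matrix transformation rule \eqref{eq:mu_B} and the hypothesis that $\Delta$ has no oriented $2$-cycles (so $\phi_{\op{p}}^\T B\phi$ genuinely captures the new exchange data). Once the coprimality of the relevant exchange binomials is verified in each of the local configurations (same vertex, non-adjacent vertices, adjacent vertices), the induction closes and the Laurent Phenomenon, including the polynomiality in coefficient variables, follows.
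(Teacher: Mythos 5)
This theorem is not proved in the paper at all: it is imported verbatim from \cite{FZ1,BFZ}, so there is no internal argument to compare against. Your sketch is a faithful reconstruction of the standard proof from those references --- the upper bound $\mc{U}(\Delta,\b{x})$, the caterpillar-style induction along the mutation path, and the coprimality of the two monomials of the exchange binomial (guaranteed by the absence of oriented $2$-cycles) combined with unique factorization in the Laurent ring are exactly the ingredients of Fomin--Zelevinsky and Berenstein--Fomin--Zelevinsky. One caveat: your assertion (b), the polynomiality in the coefficient variables, is not quite a ``direct inspection of the exchange relation,'' since after several mutations the naive denominator of a cluster variable is a product of numerators of earlier cluster variables rather than a monomial; the statement only follows once the non-divisibility of every exchange binomial by any frozen variable is threaded through the same induction that proves (a). This is consistent with the paper's own Remark after Theorem \ref{T:bounds}, which notes that the argument of \cite{BFZ} must be re-examined to see that it works for $\mc{L}_{\b{x}}$ with the coefficient variables not inverted.
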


\noindent Since $\mc{C}(\Delta,\b{x})$ is generated by cluster variables from the seeds mutation equivalent to $(\Delta,\b{x})$,
Theorem \ref{T:Laurent} can be rephrased as
$$\mc{C}(\Delta,\b{x}) \subseteq \bigcap_{(\Delta^\dag,\b{x}^\dag) \sim (\Delta,\b{x})}\mc{L}_{\b{x}^\dag},$$
where $\mc{L}_{\b{x}}:=k[x_1^{\pm 1},\dots,x_p^{\pm 1}, x_{p+1}, \dots x_{q}]$.
Note that our definition of $\mc{L}_{\b{x}}$ is slightly different from the original one in \cite{BFZ}, where the coefficient variables are inverted in $\mc{L}_{\b{x}}:=k[x_1^{\pm 1},\dots,x_p^{\pm 1}, x_{p+1}^{\pm 1}, \dots x_{q}^{\pm 1}]$.
\begin{definition}[{Upper Cluster Algebra}]
The {\em upper cluster algebra} (or UCA in short) with seed $(\Delta,\b{x})$ is
$$\uca(\Delta,\b{x}):=\bigcap_{(\Delta^\dag,\b{x}^\dag) \sim (\Delta,\b{x})}\mc{L}_{\b{x}^\dag}.$$
\end{definition}

In general, there may be infinitely many seeds mutation equivalent to $(\Delta,\b{x})$.
So the above definition is not very useful to test the membership in an upper cluster algebra.
However, the following theorem allows us to do only finitely many checking.

\begin{definition}\cite{BFZ}
We define the upper bounds
$$\mc{U}(\Delta,\b{x}):=\bigcap_{1\leq u\leq p}\mc{L}_{\mu_u(\b{x})}.$$
\end{definition}

\begin{theorem}\cite[Corollary 1.9]{BFZ} \label{T:bounds} Suppose that $B(\Delta)$ has full rank, and $(\Delta,\b{x})\sim (\Delta^\dag,\b{x}^\dag)$,
then $\mc{U}(\Delta,\b{x})=\mc{U}(\Delta^\dag,\b{x}^\dag)$. In particular, $\mc{U}(\Delta,\b{x})=\uca(\Delta,\b{x})$.
\end{theorem}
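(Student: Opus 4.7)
The plan is to follow the strategy of Berenstein--Fomin--Zelevinsky, reducing the statement to a mutation-invariance claim for the upper bound. The trivial containment $\uca(\Delta,\b{x})\subseteq \mc{U}(\Delta,\b{x})$ holds directly from the definitions. For the reverse containment we need the much stronger assertion that $\mc{U}$ is unchanged under a single seed mutation, i.e.\ that $\mc{U}(\Delta,\b{x})=\mc{U}(\Delta',\b{x}')$ whenever $(\Delta',\b{x}')=\mu_w(\Delta,\b{x})$ for some mutable vertex $w$. Once this is established, an easy induction on the length of a mutation sequence shows that every $f\in \mc{U}(\Delta,\b{x})$ lies in $\mc{L}_{\b{x}^\dag}$ for all $(\Delta^\dag,\b{x}^\dag)\sim (\Delta,\b{x})$, hence in $\uca(\Delta,\b{x})$.

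To prove the mutation-invariance, fix $f\in\mc{U}(\Delta,\b{x})$ and a mutable vertex $u$ of $\Delta'$. I would check $f\in\mc{L}_{\mu_u(\b{x}')}$ by cases. When $u=w$, note that $\mu_w(\b{x}')=\b{x}$, so the assertion is immediate from $f\in\mc{L}_{\b{x}}$. For $u\neq w$, compare the two ways of reaching the same adjacent cluster: one applies $\mu_u$ inside $\Delta$ to get $\b{x}''=\mu_u(\b{x})$, and the other applies $\mu_u$ inside $\Delta'$ to $\b{x}'$. The idea is to start from the Laurent expansion of $f$ in $\b{x}''$, substitute the exchange relation expressing $x_w$ in terms of $\mu_u(\mu_w(\b{x}))$, and verify that all potential non-Laurent terms cancel. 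This boils down to a bookkeeping of the exponents of $x_w$ and $x_u'$ appearing after the substitution, which is controlled by the relevant columns of $B(\Delta)$ and the transformation law \eqref{eq:mu_B}.

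The crucial input, and the only place the hypothesis is used, is the full rank of $B(\Delta)$: it guarantees that the columns of $B(\Delta)$ indexed by $u$ and $w$ are linearly independent, so that a monomial which simultaneously has bounded powers of $x_u$, $x_w$, and their images under mutation is forced to be a genuine Laurent monomial in $\mu_u(\b{x}')$. Without full rank one would have nontrivial linear relations among these columns that could produce monomials Laurent in two clusters but not a third.

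The main obstacle is this final cancellation argument, which in the original reference requires a careful matrix computation using the exchange relations and the transformation rule for $B$-matrices under mutation. Since the statement is quoted verbatim from \cite[Corollary~1.9]{BFZ}, I would invoke that proof rather than reproduce it, and restrict my own discussion to checking that our slightly modified definition of $\mc{L}_{\b{x}}$ (where coefficient variables are not inverted) does not affect the argument---indeed, it merely intersects each Laurent ring in the BFZ formulation with the polynomial ring in the frozen variables, and this intersection commutes with the intersections defining $\mc{U}$ and $\uca$.
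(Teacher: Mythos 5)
Your proposal matches the paper's treatment: the paper gives no independent proof, citing \cite[Corollary 1.9]{BFZ} and adding only a remark that the argument survives the modified definition of $\mc{L}_{\b{x}}$ (frozen variables not inverted), which is exactly what you do. Your sketch of the BFZ mechanism (reduction to invariance under a single mutation, with full rank supplying the coprimality of exchange monomials) is a fair summary of the cited proof, so nothing further is needed.
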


\begin{remark} This theorem is originally proved for $\mc{U}(\Delta,\b{x})$ and $\uca(\Delta,\b{x})$ with $\mc{L}_{\b{x}}$ defined there.
However, if we carefully examine the argument, we find that the result is also valid for our $\mc{L}_{\b{x}}$.
\end{remark}

Any upper cluster algebra, being a subring of a field, is an integral domain (and under our conventions, a $k$-algebra).
However, it may fail to be Noetherian \cite{Sp}.
Since normality is preserved under localization and intersection, any UCA is normal.
The next lemma is useful to identify an UCA as a subalgebra of some given Noetherian normal domain.

Let $\b{x}$ be an extended cluster in some UCA.
If $\b{x}$ is contained in another commutative ring $R$, we write $Z_R(\b{x})$ for the union of the zero locus in $\op{Spec}(R)$ defined by all cluster variables in $\b{x}$.
\begin{definition}  \label{D:CR1} We say that a seed $(\Delta,\b{x})$ is cluster-regular in codimension 1 (or CR1 in short) in $R$ if each adjacent cluster $\b{x}'\subset R$, and
$Z_R(\b{x}) \cap \left(\bigcap_{\b{x}'\stackrel{\mu_u}{\sim} \b{x}}Z_R(\b{x}') \right)$ has codimension $\geq 2$ in $\op{Spec} R$.
\end{definition}
\noindent We call two elements of $R$ {\em coprime in codimension 1} if the locus of their common zeros
has codimension~$\ge 2$ in $\operatorname{Spec}(R)$.
By expanding $Z_R(\b{x}) \cap \left(\bigcap_{\b{x}'\stackrel{\mu_u}{\sim} \b{x}}Z_R(\b{x}') \right)$,
we can easily see that the codimension condition in Definition \ref{D:CR1} is equivalent to that
\begin{align}\label{eq:CR1}
\text{Each pair of cluster variables in } \b{x} & \text{ and each pair } (x_u,x_u') \notag \\
& \text{ are coprime in codimension 1 in } R.
\end{align}

\begin{lemma}[{\cite[Proposition 3.6]{FP}}] \label{L:RCA}
Let $R$ be a $k$-algebra and a Noetherian normal domain.
If $(\Delta,\b{x})$ is a CR1 seed in $R$, then $R\supseteq\uca(\Delta,\b{x})$.
\end{lemma}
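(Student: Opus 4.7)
My plan is to combine the Laurent phenomenon, which is essentially built into the definition of the upper cluster algebra, with the classical characterization that a Noetherian normal domain equals the intersection of its localizations at height-one primes. Both ingredients are standard, so the real work is in showing that the ``bad'' locus, where an element $y\in\uca(\Delta,\b{x})$ could fail to lie in $R$, has codimension at least two; the CR1 hypothesis is designed precisely to provide this.

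First I would take an arbitrary $y\in\uca(\Delta,\b{x})$. By definition of the UCA, $y\in\mc{L}_{\b{x}}$ and $y\in\mc{L}_{\mu_u(\b{x})}$ for every mutable vertex $u$; that is, $y$ is a Laurent polynomial in the mutable cluster variables and a polynomial in the frozen variables, both with respect to $\b{x}$ and with respect to each adjacent extended cluster $\mu_u(\b{x})$. The CR1 hypothesis gives $\b{x}\subset R$ and $\mu_u(\b{x})\subset R$ for all $u$, so each of these expressions involves only elements of $R$, with denominators being monomials in the mutable cluster variables of the corresponding seed.

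Next I would pass to localizations. For any prime $\mf{p}\in\op{Spec} R$ disjoint from $Z_R(\b{x})$, the mutable cluster variables of $\b{x}$ become units in $R_{\mf{p}}$, so the Laurent expression of $y$ in $\b{x}$ exhibits $y\in R_{\mf{p}}$. The same reasoning applied to each $\mu_u(\b{x})$ yields $y\in R_{\mf{p}}$ whenever $\mf{p}\notin Z_R(\mu_u(\b{x}))$. Altogether, $y$ lies in $R_{\mf{p}}$ for every prime $\mf{p}$ lying outside the intersection
\[ B:=Z_R(\b{x})\cap\bigcap_{u} Z_R(\mu_u(\b{x})). \]

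I would finish with the codimension argument. By CR1, $B$ has codimension $\geq 2$ in $\op{Spec} R$, so no height-one prime of $R$ lies in $B$, and consequently $y\in R_{\mf{p}}$ for every height-one prime $\mf{p}$. Since $R$ is a Noetherian normal domain, $R=\bigcap_{\op{ht}(\mf{p})=1} R_{\mf{p}}$ inside $\op{Frac}(R)$, which forces $y\in R$, provided we first know $y\in\op{Frac}(R)$; this is the only subtle point, and it is dispatched by noting that the Laurent expression of $y$ in $\b{x}$ already realizes $y$ as a quotient of two elements of $R$. The argument therefore has no serious obstacle once the CR1 condition is translated into the statement that the relevant bad locus has codimension at least two.
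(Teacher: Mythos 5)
Your proof is correct and follows essentially the same route as the paper: express an element of the UCA as a Laurent polynomial in the initial and adjacent clusters, deduce regularity away from the common zero locus, and invoke the CR1 codimension bound together with $R=\bigcap_{\op{ht}\mf{p}=1}R_{\mf{p}}$ for a Noetherian normal domain. The only cosmetic difference is that the paper phrases the last step via the integral closure $\bigcap_{\op{ht}P=1}R_P$ of a Noetherian domain plus normality, whereas you use the normal-domain identity directly; these are equivalent.
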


\begin{proof}
For any $r$ in the complement of $Z_R(\b{x}) \cap \left(\bigcap_{\b{x}'\stackrel{\mu_u}{\sim} \b{x}}Z_R(\b{x}') \right)$ in $\op{Spec} R$,
we can find a seed $(\Delta^\circ,\b{x}^\circ)$ (either the original one or an adjacent one) such that any cluster variable in $\b{x}^\circ$ does not vanish at $r$. We can express any $z\in \uca(\Delta,\b{x})$ as a Laurent polynomial in $\b{x}^\circ$, which is polynomial in coefficient variables.
Since $\b{x}^\circ \subset R$, we conclude that $z$ is regular at $r$.
We know from \cite[Corollary 5.22]{AM} that if $R$ is a Noetherian domain, then $\bigcap_{\op{ht} P=1}R_P$ is the integral closure of $R$.
By the CR1 assumption, we conclude that $z$ is in the integral closure of $R$ which is $R$ itself by the normality.
\end{proof}

\subsection{$\g$-vectors and Gradings} \label{s:gv}
Let $\b{x}=\{x_1,x_2,\dots,x_q\}$ be an (extended) cluster.
For a vector $\g\in \mb{Z}^q$, we write $\b{x}^\g$ for the monomial $x_1^{\g(1)}x_2^{\g(2)}\cdots x_q^{\g(q)}$.
For $u=1,2,\dots,p$, we set ${y}_u= \b{x}^{-b_{u}}$ where $b_u$ is the $u$-th row of the matrix $B(\Delta)$,
and let ${\b{y}}=\{{y}_1,{y}_2,\dots,{y}_p\}$.

Suppose that an element $z\in\uca(\Delta)$ can be written as
\begin{equation}\label{eq:z} z = \b{x}^{\g(z)} F({y}_1,{y}_2,\dots,{y}_p),
\end{equation}
where $F$ is a rational polynomial not divisible by any ${y}_i$, and $\g(z)\in \mb{Z}^q$.
If we assume that the matrix $B(\Delta)$ has full rank,
then the elements ${y}_1,{y}_2,\dots,{y}_p$ are algebraically independent so that the vector $\g(z)$ is uniquely determined \cite{FZ4}.
We call the vector~$\g(z)$~the (extended) {\em $\g$-vector} of $z$.
Definition implies at once that for two such elements $z_1,z_2$ we have that
$\g(z_1z_2) = \g(z_1) + \g(z_2)$.
So the set $G(\Delta)$ of all $\g$-vectors in $\uca(\Delta)$ forms a sub-semigroup of $\mb{Z}^q$.

\begin{lemma}[{\cite[Lemma 5.5]{Fs1}, {\em cf.} \cite{P}}] \label{L:independent} Assume that the matrix $B(\Delta)$ has full rank.
Let $Z=\{z_1,z_2,\dots,z_k\}$ be a subset of $\uca(\Delta)$ with distinct well-defined $\g$-vectors.
Then $Z$ is linearly independent over $k$.
\end{lemma}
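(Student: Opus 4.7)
The plan is to show that any supposed nontrivial relation $\sum_{i=1}^m c_i z_i = 0$ with all $c_i$ nonzero and pairwise distinct $\g$-vectors $\g(z_i)$ forces the coefficient of some explicit Laurent monomial to be nonzero, a contradiction. First I would expand each $z_i = \b{x}^{\g(z_i)} F_i(\b{y})$ via \eqref{eq:z}; since $F_i$ is a polynomial in $y_1,\dots,y_p$ whose constant term $f_{i,0}$ is nonzero (as $F_i$ is not divisible by any $y_u$), we have $F_i(\b{y}) = \sum_{\alpha \in \mb{Z}_{\geq 0}^p} f_{i,\alpha}\, \b{y}^\alpha$. Substituting $y_u = \b{x}^{-b_u}$ yields the Laurent expansion
$$z_i = \sum_{\alpha \in \mb{Z}_{\geq 0}^p} f_{i,\alpha}\, \b{x}^{\g(z_i) - \alpha B}$$
inside $k[x_1^{\pm 1},\dots,x_p^{\pm 1},x_{p+1},\dots,x_q]$.

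Next I would construct a real linear functional $\lambda$ on $\mb{R}^q$ satisfying two conditions: (i) $\lambda(b_u) > 0$ for every $u = 1,\dots,p$, and (ii) the values $\lambda(\g(z_1)),\dots,\lambda(\g(z_m))$ are pairwise distinct. Condition (i) is exactly where the full-rank hypothesis on $B(\Delta)$ enters: since $b_1,\dots,b_p$ are linearly independent in $\mb{R}^q$, the map $\lambda \mapsto (\lambda(b_u))_{u=1}^p$ surjects onto $\mb{R}^p$, so its preimage of the positive orthant $\mb{R}^p_{>0}$ is a nonempty open cone. Condition (ii) is generic inside this cone, as it only requires avoiding the finitely many hyperplanes $\lambda(\g(z_i) - \g(z_j)) = 0$ for $i \neq j$; any $\lambda$ in the resulting dense open subset works.

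To conclude, I would track $\lambda$-values of the support. Every Laurent monomial occurring in the expansion of $z_i$ has exponent $\g(z_i) - \alpha B$ with $\alpha \in \mb{Z}_{\geq 0}^p$, hence $\lambda$-value $\lambda(\g(z_i)) - \sum_u \alpha_u \lambda(b_u) \leq \lambda(\g(z_i))$, with equality if and only if $\alpha = 0$. After reindexing so that $\lambda(\g(z_1))$ is the strict maximum among $\lambda(\g(z_1)),\dots,\lambda(\g(z_m))$, the monomial $\b{x}^{\g(z_1)}$ appears in $\sum_i c_i z_i$ only from the $\alpha = 0$ contribution of $z_1$, with total coefficient $c_1 f_{1,0} \neq 0$. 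This contradicts $\sum c_i z_i = 0$, proving linear independence.

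I do not anticipate a serious obstacle: the proof is a single ``leading-term'' argument, and the only nontrivial ingredient is the existence of the functional $\lambda$. That existence is guaranteed precisely by the full-rank hypothesis, which ensures the cone generated by $b_1,\dots,b_p$ is strongly convex. Without full rank the cone could contain a line, no strictly positive $\lambda$ would exist, and distinct $\g$-vectors would no longer suffice to separate leading monomials.
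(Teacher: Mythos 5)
The paper does not actually prove this lemma; it imports it from \cite[Lemma 5.5]{Fs1}, so there is no internal proof to compare against. Your strategy --- expand each $z_i$ in the Laurent monomials $\b{x}^{\g(z_i)-\alpha B}$, choose a linear functional $\lambda$ with $\lambda(b_u)>0$ for all mutable $u$ (possible exactly because the rows of $B(\Delta)$ are linearly independent) and with pairwise distinct values on the $\g(z_i)$, and then isolate the leading monomial --- is the standard argument and is, in substance, the proof given in the cited source. The bookkeeping is correct: $\g_i-\alpha B=\g_i$ forces $\alpha=0$ by full rank, and $\b{x}^{\g(z_1)}$ cannot arise from $z_j$ with $j\neq 1$ because that would give $\lambda(\g(z_1))<\lambda(\g(z_j))\le\lambda(\g(z_1))$.

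There is, however, one step whose justification does not hold as written: you assert that the constant term $f_{i,0}$ is nonzero \emph{because} $F_i$ is not divisible by any $y_u$. That implication is false. For instance $F=y_1+y_2$ (or $y_1y_2+y_3$) is divisible by no single $y_u$ yet has zero constant term; ``not divisible by any $y_u$'' only says that for each $u$ some monomial of $F$ omits $y_u$, not that some monomial omits all of them. The nonvanishing of $f_{i,0}$ is not cosmetic: if $f_{1,0}=0$ the monomial $\b{x}^{\g(z_1)}$ simply does not occur, the maximal $\lambda$-value over the actual support of $\sum c_iz_i$ may be attained by several distinct pairs $(i,\alpha)$ with $\alpha\neq 0$, and those contributions could cancel, so the contradiction evaporates. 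The correct way to close this is to use the normalization built into the notion of a well-defined $\g$-vector in \cite{Fs1} and \cite{P}: there the elements in question are (generic) cluster characters, whose $F$-polynomial has constant term $1$ coming from the empty subrepresentation in \eqref{eq:CC}, so $f_{i,0}\neq 0$ is part of the hypothesis rather than a consequence of non-divisibility. With that normalization stated explicitly (or with $F(0)\neq 0$ added to the definition of a well-defined $\g$-vector), your argument is complete.
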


Let $\L\cong \mb{Z}^n$ be a {\em unimodular} lattice equipped with a bilinear form $\L\times \L \to \mb{Z}$, $\alpha,\beta \mapsto \innerprod{\alpha,\beta}$.
The bilinear form may be neither symmetric nor skew-symmetric but must be unimodular, i.e., the mapping
$$\L \xrightarrow{\alpha \mapsto \innerprod{\alpha,-}} \Hom_{\mb{Z}}(\L,\mb{Z})$$
is a $\mb{Z}$-module isomorphism.
This kind of lattice is the one discussed throughout this paper.

For any $\theta\in \L$, we write
$$^\perp \theta := \{\alpha\in \L \mid \innerprod{\alpha,\theta}=0\},\text{ \ and \ } \theta^\perp := \{\beta\in \L \mid \innerprod{\theta,\beta}=0\}$$
for its left and right {\em orthogonal sublattices}.
The {\em Coxeter transformation} $\tau$ is the unique linear transformation determined by $\innerprod{\alpha,\beta}=-\innerprod{\beta,\tau\alpha}$.
Note that $^\perp\theta\cong\theta^\perp$ via the Coxeter transformation, allowing us to restrict our discussion to right orthogonal sublattices.
For a {\em real} vector $\ep\in\mb{L}$, i.e., $\innerprod{\ep,\ep}=1$, we define two maps \begin{align*}
r_\ep: \L\to {^\perp\ep},\ \alpha\mapsto \alpha-\innerprod{\alpha,\ep}\ep, \text{ \ and \ } l_\ep: \L\to \ep^\perp,\ \beta\mapsto \beta-\innerprod{\ep,\beta}\ep.
\end{align*}
called the right and left {\em projection} through $\ep$.

\begin{definition} \label{D:wtconfig} A {\em weight configuration} $\bs{\sigma}$ of $\mb{L}$ on an ice quiver $\Delta$ is an assignment for each vertex $v$ of $\Delta$ a (weight) vector $\bs{\sigma}(v)\in \mb{L}$ such that for each mutable vertex $u$, we have that
\begin{equation} \label{eq:weightconfig}
\sum_{v\to u} \bs{\sigma}(v) = \sum_{u\to w} \bs{\sigma}(w).
\end{equation}
The {\em mutation} $\mu_u$ also transforms $\bs{\sigma}$ into a weight configuration $\bs{\sigma}'$ on the mutated quiver $\mu_u(\Delta)$ defined as
\begin{equation*} \label{eq:mu_wt}
\bs{\sigma}'(v) = \begin{cases} \displaystyle \sum_{u\to w} \bs{\sigma}(w) - \bs{\sigma}(u) & \text{if } v=u, \\ \bs{\sigma}(v) & \text{if } v\neq u. \end{cases}\end{equation*}
\end{definition}

\noindent We usually write such a weight configuration as a pair $(\Delta,\bs{\sigma})$ and its mutation by $\mu_u(\Delta,\bs{\sigma})$.
By slight abuse of notation, we can view $\bs{\sigma}$ as a matrix whose $v$-th row is the weight vector $\bs{\sigma}(v)$.
In this matrix notation, the condition \eqref{eq:weightconfig} is equivalent to $B\bs{\sigma}$ is a zero matrix.
For any weight configuration of $\Delta$, the mutation can be iterated because of \eqref{eq:mu_B}.

Let $(\Delta,\bs{\sigma})$ be a weight configuration of a unimodular lattice.
Let $e$ be a vertex of $\Delta$ such that $\bs{\sigma}(e)$ corresponds to a real vector $\ep$.
We define the (left) {\em projection} of the weight configuration $(\Delta,\bs{\sigma})$ through $e$ as the pair $(\Delta_e,\bs{\sigma}_e)$,
where $\Delta_e$ is the full subquiver obtained from $\Delta$ by deleting the vertex $e$, and
$$\bs{\sigma}_e(v)= l_\ep(\bs{\sigma}(v)) =\bs{\sigma}(v)-\left\langle\ep,\bs{\sigma}(v) \right\rangle \ep.$$
It is easy to see that the projection commutes with mutations away from $e$.

Given a weight configuration $(\Delta,\bs{\sigma})$,
we can assign a multidegree (or weight) to the upper cluster algebra $\uca(\Delta,\b{x})$ by setting
$\deg(x_v)=\bs{\sigma}(v)$ for $v=1,2,\dots,q$.
Then mutation preserves multihomogeneousity.
We say that this upper cluster algebra is $\bs{\sigma}$-graded, and denoted by $\uca(\Delta,\b{x};\bs{\sigma})$.
We refer to $(\Delta,\b{x};\bs{\sigma})$ as a graded seed.
Note that the variables in $\b{y}$ have zero degrees.
So if $z$ has a well-defined $\g$-vector as in \eqref{eq:z}, then $z$ is homogeneous of degree $\g\bs{\sigma}$.



\section{Projections from Graded Cluster Algebras} \label{S:Proj}

\subsection{The map $\ell_e$}

Let $\M$ be a monoid. For simplicity, we assume that $\M$ is a submonoid inside a unimdoular lattice $\L:=\mb{Z}^n$.
Let $A$ be a $\M$-graded $k$-algebra, which is also an integral domain.  
We do not assume that $A$ is Noetherian, or have finite-dimensional graded pieces.
We will speak of a $\M$-graded $k$-algebra just as a graded algebra.
For any $\ep\in \L$, we define $A_{\ep^\perp}$ to be the graded subalgebra $\bigoplus_{\alpha\in \M\mid \ep\perp\alpha} A_{\alpha}$ of $A$.
We denote by $A_e$ the localization of $A$ at $e\in A$.

For any $e\in A$ of real weight $\ep$, we define the algebra homomorphism $\ell_e: A\to (A_e)_{\ep^\perp}$ by the following $k$-linear extension on the homogeneous components:
$$\forall a\in A_\alpha,\ a \mapsto a e^{-\innerprod{\ep,\alpha}}.$$
Clearly $\ell_e$ fixes $A_{\ep^\perp}$.
For any $ae^{-n}\in (A_e)_{\ep^\perp}$, we have that $\ell_e(a)=ae^{-n}$ so $\ell_e$ is surjective.
The homomorphism $\ell_e$ can be uniquely extended to the quotient fields $\QF(A)\to \QF(A)_{\ep^\perp}$, which fixes $\QF(A)_{\ep^\perp}$.
We denote the extension still by $\ell_e$.
Let $i_e$ be the embedding $\QF(A)_{\ep^\perp} \hookrightarrow \QF(A)$.

\begin{definition} \label{D:projector} An element $e\in A_\ep$ is called a {\em projector} in $A$ if $\ep$ is real and $(A_e)_{\ep^\perp}=A_{\ep^\perp}$.
Such an $\ep\in\M$ is also called a {\em projector} for $A$.
\end{definition}

\noindent If $e$ is a projector, then $\ell_e$ maps $A$ onto $A_{\ep^\perp}$. We call $\ell_e$ the {\em projection through} $e$.

\begin{lemma} \label{L:projector} $e\in A_\ep$ is a projector if and only if for any $f\in A_\alpha$ with $\innerprod{\ep,\alpha}=d>0$, $f$ factors as $f=e^d f'$ for some $f'\in A_{\ep^\perp}$.
\end{lemma}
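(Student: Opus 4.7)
The plan is to verify both directions directly from the definitions, exploiting that $\ep$ is real (so $\innerprod{\ep,\ep}=1$) and that $A$, being an integral domain, embeds into its localization $A_e$ so factorizations in $A_e$ pull back to $A$.

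For the $(\Leftarrow)$ direction, assume the factorization property. The inclusion $A_{\ep^\perp} \subseteq (A_e)_{\ep^\perp}$ holds tautologically. For the reverse, I would take any homogeneous element of $(A_e)_{\ep^\perp}$, which we may write in the form $f/e^n$ with $f \in A_\alpha$ homogeneous and $n \geq 0$. The weight of $f/e^n$ in $A_e$ is $\alpha - n\ep$, and the condition $\innerprod{\ep,\alpha-n\ep}=0$ together with $\innerprod{\ep,\ep}=1$ forces $\innerprod{\ep,\alpha}=n$. If $n=0$ we are done; if $n>0$ the hypothesis supplies $f'\in A_{\ep^\perp}$ with $f=e^n f'$, whence $f/e^n = f'\in A_{\ep^\perp}$.

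For the $(\Rightarrow)$ direction, assume $e$ is a projector and let $f\in A_\alpha$ with $d=\innerprod{\ep,\alpha}>0$. I would form the element $f/e^d\in A_e$ and check that it is homogeneous of weight $\alpha - d\ep$, which lies in $\ep^\perp$ again because $\innerprod{\ep,\ep}=1$. Hence $f/e^d \in (A_e)_{\ep^\perp} = A_{\ep^\perp}$, so there exists $f'\in A_{\ep^\perp}$ with $f/e^d = f'$ as elements of $A_e$. Clearing denominators gives $f = e^d f'$ inside $A_e$, and since $A$ is an integral domain the natural map $A\hookrightarrow A_e$ is injective, so this identity already holds in $A$.

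There is essentially no obstacle here: the entire argument rests on (i) the bookkeeping identity $\innerprod{\ep,\alpha-n\ep} = \innerprod{\ep,\alpha}-n$ coming from reality of $\ep$, and (ii) injectivity of $A\to A_e$ from $A$ being a domain. The only thing worth being careful about is the case $n=0$ in the backward direction, which needs to be handled separately since the hypothesis is stated for $d>0$.
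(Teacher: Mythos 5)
Your proof is correct and takes essentially the same route as the paper's: both identify a homogeneous element of $(A_e)_{\ep^\perp}$ as $fe^{-d}$ with $f\in A_\alpha$ and $\innerprod{\ep,\alpha}=d$ (using $\innerprod{\ep,\ep}=1$), and observe that equality with $A_{\ep^\perp}$ amounts exactly to the factorization $f=e^df'$. You merely spell out the two directions, the pullback of the factorization from $A_e$ to $A$, and the $n=0$ case, which the paper compresses into a single biconditional.
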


\begin{proof}  Clearly $(A_e)_{\ep^\perp}\supseteq A_{\ep^\perp}$. For $\forall f\in A_\alpha$ such that $f e^{-d} \in (A_e)_{\ep^\perp}$,
we have that $\innerprod{\ep,\alpha}=d \innerprod{\ep,\ep}$.
\hbox{Then $(A_e)_{\ep^\perp}=A_{\ep^\perp}$ if and only if $\innerprod{\ep,\ep}=1$ and $f':=f e^{-d} \in A_{\ep^\perp}$.}
\end{proof}

\begin{proposition} \label{P:projector} Suppose that $\ep$ is a projector for $A$, then we have the following. \begin{enumerate}
\item If $A_{\bs{0}}=k$, then $\dim A_{n\ep}=1$ for any $n\in\mb{N}$.
\item If any $u\in A_{\b{0}}$ is a unit in $A$, then any $f \in A_\ep$ is irreducible in $A$.
\item If $A$ is a UFD and $A_{\bs{0}}=k$ contains all units in $A$, then $\ep$ is extremal in the cone $\mb{R}^+\M$.
\end{enumerate} \end{proposition}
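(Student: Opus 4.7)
The plan is to deduce each part from Lemma~\ref{L:projector} together with the graded-domain structure of $A$. For~(1), any $f\in A_{n\ep}$ satisfies $\innerprod{\ep,n\ep}=n$, so when $n>0$ the lemma gives $f=e^n f'$ with $f'\in A_{\ep^\perp}$; since $\deg f'=\bs{0}$ we have $f'\in A_{\bs{0}}=k$, showing $A_{n\ep}=k\,e^n$.

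For~(2), I factor $f=gh$ and note that since $A$ is a domain graded by the torsion-free group $\mb{Z}^n$, a standard leading/trailing term argument (using any total order on $\mb{Z}^n$ compatible with addition) forces $g,h$ to be homogeneous, say $g\in A_\gamma$, $h\in A_\delta$ with $\gamma+\delta=\ep$. Since $\innerprod{\ep,\gamma}+\innerprod{\ep,\delta}=1$ is a sum of two integers, I may assume $d:=\innerprod{\ep,\delta}\geq 1$. Lemma~\ref{L:projector} then gives $h=e^d h'$ with $h'\in A_{\ep^\perp}$ and $f=ef_0$ with $f_0\in A_{\ep^\perp}$; cancelling $e$ in the domain yields
\[
f_0 = (g e^{d-1})\,h',
\]
where $ge^{d-1}$ lies in $A_{\ep^\perp}$ because its $\ep$-degree is $(1-d)+(d-1)=0$. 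Since $\deg f_0=\bs{0}$, the element $f_0$ lies in $A_{\bs{0}}$, a field by hypothesis, so $f_0$ is a nonzero unit. Both $ge^{d-1}$ and $h'$ are therefore units in $A$; in particular $g\cdot(e^{d-1}h'f_0^{-1})=1$, so $g$ is a unit. The trickiest sub-case is $d\geq 2$, where $g$ has non-positive $\ep$-degree and Lemma~\ref{L:projector} does not apply to $g$ directly; absorbing $d-1$ powers of $e$ into $g$ is the key device that places the analysis inside $A_{\ep^\perp}$.

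For~(3), I argue by contradiction. Suppose $\ep$ is not extremal in $\mb{R}^+\M$, so there exist $\alpha,\beta\in\mb{R}^+\M$ with $\ep=\alpha+\beta$ and $\alpha\notin\mb{R}^+\ep$. Expanding $\alpha=\sum t_iw_i$ with $t_i>0,\ w_i\in\M$, at least one $w_i$ is not proportional to $\ep$; isolating that term and clearing denominators in the rational decomposition gives $m\ep=\alpha'+\beta'$ with $m\in\mb{Z}_{>0}$, $\alpha',\beta'\in\M$, and $\alpha'$ not proportional to $\ep$. I then pick nonzero $g\in A_{\alpha'}$, $h\in A_{\beta'}$; by Part~(1), $A_{m\ep}=k\,e^m$, so $gh=ce^m$ for some $c\in k^\times$. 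The UFD hypothesis together with irreducibility of $e$ from Part~(2) (whose hypothesis holds since $A_{\bs{0}}=k$ is a field) makes $e$ prime, so I extract maximal powers: $g=e^a g_0,\ h=e^b h_0$ with $e\nmid g_0,h_0$. Then $g_0h_0=c\,e^{m-a-b}$ with $e\nmid g_0h_0$ forces $a+b=m$ and $g_0h_0=c\in k^\times$, so $g_0,h_0$ are units and, by hypothesis, lie in $A_{\bs{0}}=k$. Hence $g\in k\cdot e^a$ has degree $a\ep$, contradicting $\alpha'\notin\mb{R}^+\ep$.
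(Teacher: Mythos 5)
Your proof is correct and follows essentially the same route as the paper's: derive $A_{n\ep}=k\,e^n$ from Lemma~\ref{L:projector} for (1), pull an $e$-power out of the factor with positive $\ep$-pairing for (2), and combine $A_{n\ep}=k\,e^n$ with irreducibility of $e$ and unique factorization for (3). You are somewhat more explicit than the paper at a few junctures (the reduction to homogeneous factors via a total order on $\mb{Z}^n$, the cone-theoretic extraction of $m\ep=\alpha'+\beta'$, and invoking primeness of $e$ to peel off maximal $e$-powers in (3) rather than just citing the UFD property), but the underlying argument is the same.
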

\begin{proof}
(1). If $e\in A_\ep$ is a projector, then for all $f\in A_{n\ep}$ we have that $f=e^n f'$ for some $f'\in A_{\bs{0}}$. But $A_{\bs{0}}=k$, so $\dim A_{n\ep}=1$.

(2). Let $e$ be a projector in $A_\ep$.
Suppose that $f=gh\in A_\ep$ with $g\in A_\alpha$ and $h\in A_\beta$.
One of $\alpha$ and $\beta$, say $\alpha$, must have that $\innerprod{\ep,\alpha}=d>0$.
Then $g=g'e^d$ for some $g'\in A_{\ep^\perp}$.
On the other hand, $f=eu$ for some $u\in A_{\b{0}}$.
Then $eu=g'e^dh \Rightarrow g'e^{d-1}h=u$ because $A$ is a domain.
By assumption $u$ is a unit, then so is $h$.

(3). Suppose that $\ep$ is not extremal, then there is some $\alpha,\beta\in \M \setminus \mb{R}\ep$ such that $n\ep = \alpha+\beta$.
By (1), we have that $e^n = gh$ for $e\in A_{\ep}$, and some $g\in A_\alpha, h\in A_\beta$.
By (2) $e$ is irreducible, so $e^n = gh$ contradicts the UFD property of $A$.
\end{proof}

\subsection{The Cases when $\ep\in\mb{M}$}
Throughout this paper, we will use $e$ for both the vertex $e$ and the cluster variable $\b{x}(e)$.
We find no potential confusion from this abuse of notation.
From now on, the map $\ell_e$ is always defined on the quotient field $k(\b{x})$ of $\uca(\Delta,\b{x};\bs{\sigma})$.
If $\b{x}$ is an extended cluster, when we say ``Laurent polynomial in $\b{x}$", it always implies that it is polynomial in coefficient variables in $\b{x}$.
We omit this part if it is not important in the context.

The goal of this subsection is to relate $\left(\uca(\Delta;\bs{\sigma})_e\right)_{\ep^\perp}$ to another upper cluster algebra, where $\ep=\bs{\sigma}(e)$ is real.
Our first lemma reduces the general case to the case when $e$ is frozen.
By freezing a set of vertex $\b{u}$, we mean that we make $\b{u}$ frozen.
Let $\Delta^{\b{u}}$ be the quiver obtained from $\Delta$ by freezing $\b{u}$.

%

\begin{lemma} \label{L:reduce2frozen} Suppose that $B(\Delta)$ is of full rank. Let $\mc{A}:=\uca(\Delta,\b{x};\bs{\sigma})$ and $\mc{S}:=\uca(\Delta^e,\b{x}; \bs{\sigma})$, then we have that $\mc{A}_e=\mc{S}_e$.
\end{lemma}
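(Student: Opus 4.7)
The plan is to express both $\mc{A}$ and $\mc{S}$ as finite upper bounds via Theorem \ref{T:bounds}, then compare them after inverting $x_e$. First I would note that $B(\Delta^e)$ is obtained from $B(\Delta)$ by deleting the $e$-th row, so it has full rank $p-1$; hence Theorem \ref{T:bounds} applies to both ice quivers and yields
$$\mc{A} = \bigcap_{u=1}^p \mc{L}_{\mu_u(\b{x})}, \qquad \mc{S} = \bigcap_{u\ne e} \mc{L}'_{\mu_u(\b{x})},$$
where $\mc{L}_{\cdot}$ uses the mutable/frozen partition of $\Delta$ and $\mc{L}'_{\cdot}$ that of $\Delta^e$ (so $x_e$ is inverted in the former but appears only as a polynomial generator in the latter). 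Since mutation at $u\ne e$ does not touch $x_e$, we have $\mc{L}_{\mu_u(\b{x})} = \mc{L}'_{\mu_u(\b{x})}[x_e^{-1}]$ for each such $u$.

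A routine check -- any element of $\bigcap_{u\ne e}\mc{L}_{\mu_u(\b{x})}$ can be cleared of $x_e$-denominators by multiplying through by a single uniform power of $x_e$, landing in each $\mc{L}'_{\mu_u(\b{x})}$ and hence in $\mc{S}$ -- gives $\mc{S}_e = \bigcap_{u\ne e}\mc{L}_{\mu_u(\b{x})}$. Substituting into the upper bound for $\mc{A}$ produces the clean factorization
$$\mc{A} = \mc{L}_{\mu_e(\b{x})} \cap \mc{S}_e.$$
Since $x_e$ is a non-zerodivisor in both factors (indeed, they sit inside the field $k(\b{x})$), localization at $x_e$ commutes with this intersection and I obtain
$$\mc{A}_e = \mc{L}_{\mu_e(\b{x})}[x_e^{-1}] \cap \mc{S}_e.$$
The inclusion $\mc{A}_e \subseteq \mc{S}_e$ is now built in.

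The remaining task is to show the reverse inclusion $\mc{S}_e \subseteq \mc{L}_{\mu_e(\b{x})}[x_e^{-1}]$. By the Laurent phenomenon applied to $\mc{S}$, we have $\mc{S} \subseteq \mc{L}'_{\b{x}}$ and hence $\mc{S}_e \subseteq \mc{L}_{\b{x}}$, so it suffices to prove $\mc{L}_{\b{x}} \subseteq \mc{L}_{\mu_e(\b{x})}[x_e^{-1}]$. The only generator that is not manifestly on the right is $x_e$ itself; but the exchange relation at $e$ gives
$$x_e = \frac{\prod_{v\to e} x_v + \prod_{e\to w} x_w}{x_e'},$$
and the right-hand side lies in $\mc{L}_{\mu_e(\b{x})}$ since $x_e'$ is invertible there and the numerator is a polynomial in the remaining original cluster variables. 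I expect this invocation of the exchange relation to be the main (though mild) technical step of the argument; everything else is intersection bookkeeping combined with Theorem \ref{T:bounds}.
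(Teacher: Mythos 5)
Your proof is correct and follows essentially the same route as the paper's: both reduce to the finite upper bounds of Theorem \ref{T:bounds} and use the exchange relation $x_ex_e'=h$ at $e$ to pass between the Laurent rings of $\Delta$ and $\Delta^e$; your version just makes the intersection bookkeeping explicit. (One cosmetic point: the reason localization at $x_e$ commutes with the intersection $\mc{L}_{\mu_e(\b{x})}\cap\mc{S}_e$ is that $x_e$ \emph{belongs} to both factors---for the first, again by the exchange relation---not merely that it is a non-zerodivisor.)
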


\begin{proof} We first show that $\mc{S}$ is a subalgebra of $\mc{A}$.
By Theorem \ref{T:bounds}, it suffices to show that any $s\in\mc{S}$ can be written as a Laurent polynomial in $\b{x}':=\mu_e(\b{x})$.
Note that $s$ can be written as a Laurent polynomial $p$ in $\b{x}$, which is polynomial in $e$.
But $ee' = h$, where $h$ is the exchange polynomial. Substituting $e$ by $h(e')^{-1}$, we get the required Laurent polynomial in $\b{x}'$.

Since any cluster $\b{x}$ of $\mc{S}$ is also a cluster of $\mc{A}$,
any element in $\mc{A}$ can be written as a Laurent polynomial in $\b{x}$ but not necessary polynomial in $e$.
So after inverting $e$ in $\mc{S}$, we obtain that $\mc{S}_e \supseteq \mc{A}$.
We conclude that $\mc{S}_e = \mc{A}_e$.
\end{proof}

\begin{remark} \label{R:CR1} The lemma still holds if the full rank condition is replaced by the condition that $\mc{A}$ is Noetherian and $(\Delta,\b{x})$ is a CR1 seed in $\mc{A}$.
Indeed, if $(\Delta,\b{x})$ is a CR1 seed in $\mc{A}$, then $(\Delta^e,\b{x})$ is also a CR1 seed in $\mc{A}$ by \eqref{eq:CR1}.
By Lemma \ref{L:RCA}, we still have that $\mc{S} \subset \mc{A}$.
The same remark applies to several results depending on Lemma \ref{L:reduce2frozen}, eg.,
Proposition \ref{P:subalgebra}, Corollary \ref{C:Gproj_e}, Theorem \ref{T:invGUCA}, and Corollary \ref{C:model_proj}.
\end{remark}

For a graded seed $(\Delta,\b{x};\bs{\sigma})$, we define $\b{x}_e$ by $\b{x}_e(v)=\ell_e(\b{x}(v))=\b{x}(e)^{-\innerprod{\ep,\bs{\sigma}(v)}}\b{x}(v)$ for $v\neq e$.
Since $\b{x}$ is algebraically independent, so is $\b{x}_e$.
We have seen that $(\Delta_e;\bs{\sigma}_e)$ is a weight configuration, so $(\Delta_e, \b{x}_e ;\bs{\sigma}_e)$ is a graded seed.

\begin{lemma} \label{L:projmu} For any sequence of mutations $\bs{\mu}:=\mu_{u_k}\cdots\mu_{u_2}\mu_{u_1}$ with $u_i\neq e$, we have that
$\bs{\mu}(\Delta)_e = \bs{\mu}(\Delta_e)$ and $\ell_e(\bs{\mu}(\b{x})(v)) = \bs{\mu}(\b{x}_e)(v)$ for $v\neq e$.
\end{lemma}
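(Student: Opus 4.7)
The plan is to proceed by induction on the length $k$ of the mutation sequence, reducing immediately to the case of a single mutation $\mu_u$ at a vertex $u\neq e$. The base case $k=0$ holds by the very definition of $(\Delta_e,\b{x}_e;\bs{\sigma}_e)$, since $\b{x}_e(v) = \ell_e(\b{x}(v))$ for $v\neq e$. The inductive step breaks into three verifications: the quiver identity $\mu_u(\Delta)_e = \mu_u(\Delta_e)$, the cluster-variable identity $\ell_e(\mu_u(\b{x})(u)) = \mu_u(\b{x}_e)(u)$ (which is the only non-trivial coordinate since all other cluster variables are fixed by $\mu_u$), and the compatibility of $l_\ep$ with weight mutation.

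For the quiver part, I would appeal directly to the three-step recipe of Definition \ref{D:Qmu}. Since $u\neq e$, steps (2) and (3) only affect arrows incident to $u$, and step (1) creates arrows $v\to w$ out of paths $v\to u\to w$. Deleting the vertex $e$ simply discards those new arrows that happen to have $e$ as an endpoint, which is exactly what one obtains by first removing $e$ (killing the paths through $e$) and then mutating at $u$. Hence deletion of $e$ commutes with $\mu_u$.

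For the cluster variable part, I would apply the algebra homomorphism $\ell_e$ to the exchange relation
\[
x_u\, x_u' \;=\; \prod_{v\to u} x_v \;+\; \prod_{u\to w} x_w
\]
in $\Delta$. The key observation is that $\ell_e(x_e) = e\cdot e^{-\innerprod{\ep,\ep}} = 1$ (since $\ep$ is real), so every occurrence of $x_e$ in the two monomials collapses to $1$, while $\ell_e(x_v) = \b{x}_e(v)$ for $v\neq e$. The resulting equation is precisely the exchange relation at $u$ in $\Delta_e$ applied to $\b{x}_e$, because in $\Delta_e$ all arrows incident to $e$ have been removed. Combined with $\ell_e(x_u) = \b{x}_e(u) = \mu_u(\b{x}_e)(u')^{-1}\cdot(\cdots)$ this yields $\ell_e(x_u') = \mu_u(\b{x}_e)(u)$, as desired.

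For the weight part, one checks that $l_\ep(\bs{\sigma}'(u)) = \bs{\sigma}_e'(u)$ by linearity of $l_\ep$: applying $l_\ep$ to $\sum_{u\to w}\bs{\sigma}(w) - \bs{\sigma}(u)$ and using $l_\ep(\bs{\sigma}(e)) = l_\ep(\ep) = 0$ causes the contributions from arrows between $u$ and $e$ to vanish, matching the analogous sum in $\Delta_e$. The main obstacle, if any, is purely bookkeeping --- one must carefully track how arrows through $e$ contribute factors of $x_e$ (respectively multiples of $\ep$) on the algebra side (respectively on the weight side), and observe that these factors are annihilated by $\ell_e$ (respectively by $l_\ep$), matching exactly what is lost by deleting $e$ first. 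Once this is observed, the identity is immediate.
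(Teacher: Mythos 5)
Your proof is correct and follows essentially the same route as the paper's (which simply reduces to a single mutation $\mu_u$, notes that deletion of $e$ visibly commutes with $\mu_u$, and invokes that $\ell_e$ is a homomorphism); you have merely made explicit the key computation $\ell_e(x_e)=1$ and the term-by-term effect on the exchange relation. The extra verification of the weight compatibility is not required by the statement of the lemma but is harmless.
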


\begin{proof} It suffices to show for $\bs{\mu}=\mu_u$ with $u\neq e$.
It is rather clear from the definition that $\mu_u$ commutes with the deletion at $e$.
The map $\ell_e$ commutes with mutations because $\ell_e$ is a homomorphism.
\end{proof}

\begin{corollary} \label{C:liftcluster} For any extended cluster $\b{x}_e^\dag$ of $(\Delta_e, \b{x}_e)$, there is an extended cluster $\b{x}^\dag$ of $(\Delta, \b{x})$
such that $\b{x}^\dag(e)=e$ and $\ell_e (\b{x}^\dag(v))=\b{x}_e^\dag(v)$ for $v\neq e$.
In particular, $\mc{C}(\Delta_e, \b{x}_e; \bs{\sigma}_e)\subseteq (\mc{C}(\Delta, \b{x};\bs{\sigma})_e)_{\ep^\perp}$.
\end{corollary}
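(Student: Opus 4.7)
The plan is to lift the mutation sequence that produces $\b{x}_e^\dag$ from $\b{x}_e$ up to a mutation sequence on $(\Delta,\b{x})$ that avoids the vertex $e$, and then apply Lemma \ref{L:projmu} term by term. This is essentially the only tool we have, and since $e$ is already absent from $\Delta_e$, the hypothesis of Lemma \ref{L:projmu} will be automatic.

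By Definition \ref{D:seeds}, there exists a mutation sequence $\bs{\mu}=\mu_{u_k}\cdots \mu_{u_1}$ on $\Delta_e$ with $\bs{\mu}(\b{x}_e)=\b{x}_e^\dag$. Since none of the $u_i$ equals $e$ (the vertex $e$ is not in $\Delta_e$), the same sequence $\bs{\mu}$ can be applied to $(\Delta,\b{x})$. Set $\b{x}^\dag:=\bs{\mu}(\b{x})$, which is then an extended cluster of $(\Delta,\b{x})$ by construction. Because $e$ is never mutated, $\b{x}^\dag(e)=\b{x}(e)=e$. By Lemma \ref{L:projmu}, for every $v\neq e$ we have
\begin{equation*}
\ell_e\bigl(\b{x}^\dag(v)\bigr) = \ell_e\bigl(\bs{\mu}(\b{x})(v)\bigr) = \bs{\mu}(\b{x}_e)(v) = \b{x}_e^\dag(v),
\end{equation*}
which gives the first assertion.

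For the inclusion $\mc{C}(\Delta_e,\b{x}_e;\bs{\sigma}_e)\subseteq (\mc{C}(\Delta,\b{x};\bs{\sigma})_e)_{\ep^\perp}$, recall that $\mc{C}(\Delta_e,\b{x}_e;\bs{\sigma}_e)$ is generated as a $k$-algebra by the collection of all extended cluster variables over all seeds mutation-equivalent to $(\Delta_e,\b{x}_e)$. Any such variable $\b{x}_e^\dag(v)$, by the first assertion, equals $\ell_e(\b{x}^\dag(v))$ for some cluster variable $\b{x}^\dag(v)\in \mc{C}(\Delta,\b{x};\bs{\sigma})$. Since $\ell_e$ lands in $(\QF(\mc{C}(\Delta,\b{x}))_e)_{\ep^\perp}$ and $\b{x}^\dag(v)\in\mc{C}(\Delta,\b{x})$, we conclude that $\b{x}_e^\dag(v)\in (\mc{C}(\Delta,\b{x};\bs{\sigma})_e)_{\ep^\perp}$, and the inclusion follows.

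The only subtle point — and arguably the "hard part" — is checking that $\b{x}^\dag(e)=e$ so that $\ell_e$ is well-defined on each $\b{x}^\dag(v)$ with the correct normalization, and that the weight-grading is preserved under $\bs{\mu}$ so that the image of $\ell_e$ really lies in the $\ep^\perp$-graded piece. Both follow from the fact that $e$ is never mutated and that mutation of a graded seed preserves multihomogeneity, as recorded in Section \ref{s:gv}. Thus no genuine obstacle arises, and the corollary reduces to unpacking Lemma \ref{L:projmu}.
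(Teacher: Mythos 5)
Your proof is correct and follows essentially the same route as the paper: lift the mutation sequence $\bs{\mu}$ producing $\b{x}_e^\dag$ to $(\Delta,\b{x})$, note $\b{x}^\dag(e)=e$ since $e$ is never mutated, and conclude via Lemma \ref{L:projmu}. The paper leaves the ``in particular'' inclusion implicit, while you spell it out; that addition is also fine.
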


\begin{proof} Suppose that $\b{x}_e^\dag$ is obtained from $\b{x}_e$ by a sequence of mutations $\bs{\mu}:=\mu_{u_k}\cdots\mu_{u_2}\mu_{u_1}$.
We claim that $\b{x}^\dag=\bs{\mu}(\b{x})$.
Since the mutation is away from $e$, we have that $\b{x}^\dag(e)=e$.
By Lemma \ref{L:projmu} we have that
$\ell_e(\b{x}^\dag(v))= \ell_e(\bs{\mu}(\b{x})(v))=\bs{\mu}(\b{x}_e)(v).$
\end{proof}

\begin{proposition} \label{P:subalgebra} Suppose that $e$ is frozen or $B(\Delta)$ has full rank.
Let $\mc{A}:=\uca(\Delta,\b{x};\bs{\sigma})$ and $\mc{S}:=\uca(\Delta_e, \b{x}_e ;\bs{\sigma}_e)$,
then we have that $(\mc{A}_e)_{\ep^\perp}\cong \mc{S}$.
\end{proposition}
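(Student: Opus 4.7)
The plan is to exhibit the isomorphism via the projection homomorphism $\ell_e$ itself, by establishing $\ell_e(\mc{A})=(\mc{A}_e)_{\ep^\perp}$ and $\ell_e(\mc{A})=\mc{S}$. First I reduce to the case where $e$ is frozen. If $e$ is mutable, the full-rank hypothesis on $B(\Delta)$ lets me invoke Lemma~\ref{L:reduce2frozen} to replace $\mc{A}$ by $\uca(\Delta^e,\b{x};\bs{\sigma})$ without changing $\mc{A}_e$; since $B(\Delta^e)$ is obtained from $B(\Delta)$ by deleting one row, it inherits full row rank. So henceforth I may assume $e$ is frozen (and $B(\Delta)$ has full rank), which means every seed of $\Delta$ arises, via Corollary~\ref{C:liftcluster}, as a lift $\hat\dag$ of a seed $\dag$ of $\Delta_e$. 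The equality $\ell_e(\mc{A})=(\mc{A}_e)_{\ep^\perp}$ is then formal: $\ell_e$ fixes the $\ep^\perp$-graded part, and any $z=a/e^n\in(\mc{A}_e)_{\ep^\perp}$ with $a\in\mc{A}$ homogeneous of weight $n\ep+\alpha$ satisfies $\ell_e(a)=ae^{-n}=z$.

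To prove $\ell_e(\mc{A})\subseteq\mc{S}$, I record the identity
\[ \ell_e(\b{x}^{\hat\dag,\gamma})=\prod_{v\neq e}\b{x}_e^\dag(v)^{\gamma_v} \]
for any lifted seed $\hat\dag$ and any exponent vector $\gamma$: $\ell_e$ is a ring homomorphism, $\ell_e(e)=1$, and $\ell_e(\b{x}^{\hat\dag}(v))=\b{x}_e^\dag(v)$ for $v\neq e$, so the $e$-powers produced by each factor cancel automatically. Consequently $\ell_e$ sends $\mc{L}_{\b{x}^{\hat\dag}}$ into $\mc{L}_{\b{x}_e^\dag}$, and intersecting over all seeds gives $\ell_e(\mc{A})\subseteq\bigcap_\dag\mc{L}_{\b{x}_e^\dag}=\mc{S}$.

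For the reverse inclusion $\mc{S}\subseteq\ell_e(\mc{A})$, I take $s\in\mc{S}$ homogeneous of weight $\alpha\in\ep^\perp$, write $s$ as a Laurent polynomial in $\b{x}_e^\dag$, and back-substitute $\b{x}_e^\dag(v)=e^{-\innerprod{\ep,\bs{\sigma}^{\hat\dag}(v)}}\b{x}^{\hat\dag}(v)$. Each monomial acquires a power of $e$, and taking the maximum over the finite Laurent expansion produces an integer $K_\dag$ with $e^{K_\dag}s\in\mc{L}_{\b{x}^{\hat\dag}}$. Theorem~\ref{T:bounds} applied to the full-rank quiver $\Delta^e$ reduces the check to the finite family consisting of the original cluster and its mutations $\mu_u(\b{x})$ for $u\neq e$ mutable, so $K^*:=\max K_\dag$ over this list exists and $e^{K^*}s\in\mc{U}(\Delta^e,\b{x})=\mc{A}$. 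Hence $s\in\mc{A}_e$, and the weight condition places it in $(\mc{A}_e)_{\ep^\perp}=\ell_e(\mc{A})$.

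The main obstacle is obtaining a uniform $K^*$ across clusters: each seed contributes its own bound $K_\dag$, and without further input there is no reason these bounds should be globally controlled. The full-rank reduction carried out in the first step is precisely the ingredient that, through Theorem~\ref{T:bounds}, collapses the a priori infinite family of seeds to a finite one and produces a single global $e$-exponent.
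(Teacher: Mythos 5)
Your argument follows the same route as the paper's: reduce to the case of frozen $e$ via Lemma~\ref{L:reduce2frozen}, prove $\ell_e(\mc{A})\subseteq\mc{S}$ by pushing Laurent expansions through the ring homomorphism $\ell_e$ along the lifted clusters of Corollary~\ref{C:liftcluster}, and prove $\mc{S}\subseteq(\mc{A}_e)_{\ep^\perp}$ by back-substituting $\b{x}_e^\dag(v)=e^{-\innerprod{\ep,\bs{\sigma}^{\hat\dag}(v)}}\b{x}^{\hat\dag}(v)$. The one place you genuinely diverge is the final step: to exhibit a single element of $\mc{A}$ mapping onto a given $s\in\mc{S}$ you need one power of $e$ clearing denominators in \emph{every} cluster simultaneously, and you obtain it by invoking Theorem~\ref{T:bounds} to collapse the a priori infinite family of seeds to the initial one and its one-step mutations. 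You are right that this uniformity is the crux; the paper's proof instead compares the lifts $p(\b{x}^\dag)$ and $p'(\b{x}^\ddag)$ for an arbitrary pair of clusters, observes they differ multiplicatively by a power of $e$, and leaves the uniformity implicit, so your treatment of this point is the more explicit of the two.

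The cost of your shortcut is that it quietly strengthens the hypothesis. The proposition assumes ``$e$ is frozen \emph{or} $B(\Delta)$ has full rank,'' and in the branch where $e$ is frozen but $B(\Delta)$ need not have full rank we have $\Delta^e=\Delta$, Theorem~\ref{T:bounds} is unavailable, and your identification $\mc{U}(\Delta^e,\b{x})=\uca(\Delta^e,\b{x})$ --- hence the existence of the uniform $K^*$ --- is unjustified. Your parenthetical ``(and $B(\Delta)$ has full rank)'' after the reduction replaces the disjunction by the conjunction. Everything else checks out: the reduction when $e$ is mutable is valid (deleting the $e$-th row preserves full row rank), the identity $\ell_e(\b{x}^{\hat\dag,\gamma})=\prod_{v\neq e}\b{x}_e^\dag(v)^{\gamma_v}$ and the formal equality $\ell_e(\mc{A})=(\mc{A}_e)_{\ep^\perp}$ are correct, and since $e$ is a coefficient variable, multiplying by extra powers of $e$ keeps you inside each $\mc{L}_{\b{x}^{\hat\dag}}$, so the maximum $K^*$ over the finite list does the job when full rank holds. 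To cover the stated generality you would either need a separate argument for the frozen-but-degenerate case (e.g., the paper's pairwise comparison of lifts, or the CR1 alternative of Remark~\ref{R:CR1}), or explicitly restrict the statement to the full-rank situation, which is the only one used in the later applications.
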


\begin{proof} By Lemma \ref{L:reduce2frozen}, we only need to consider the case when $e$ is frozen.
We first show that $\mc{S}\subseteq (\mc{A}_e)_{\ep^\perp}$.
Let $\b{x}^\dag\sim \b{x}$ be any extended cluster of $(\Delta,\b{x})$.
Since $e$ is frozen, $\ell_e(\b{x}^\dag)\setminus\{1\}$ is an extended cluster of $(\Delta_e, \b{x}_e)$ by Lemma \ref{L:projmu}.
For any homogeneous $s\in \mc{S}$, we write $s$ as a Laurent polynomial $p(\ell_e(\b{x}^\dag))$, which is equal to $\ell_e (p(\b{x}^\dag))$.
Let $\b{x}^\ddag$ be another extended cluster, and we write $s$ as a Laurent polynomial $p'(\ell_e(\b{x}^\ddag))=\ell_e (p'(\b{x}^\ddag))$.
By the definition of $\ell_e$, we see that $p(\b{x}^\dag)$ and $p'(\b{x}^\ddag)$ differ by a factor of some power of $e$.
So $s$ lies in the image $\ell_e(\mc{A}_e)$, which is $(\mc{A}_e)_{\ep^\perp}$.

Now let $a\in (\mc{A}_e)_{\ep^\perp}$ be homogeneous.
Let $\b{x}_e^\dag \sim \b{x}_e$ be any extended cluster of $(\Delta_e,\b{x}_e)$.
Choose an extended cluster $\b{x}^\dag$ as in Corollary \ref{C:liftcluster}, and then we can write $a$ as a Laurent polynomial $p(\b{x}^\dag)$.
We apply the map ${\ell}_e$ to $a=p(\b{x}^\dag)$, and then $a=\ell_e(a)=p(\ell_e(\b{x}^\dag))$ is a Laurent polynomial in $\b{x}_e^\dag$.
We thus obtain the other containment $(\mc{A}_e)_{\ep^\perp}\subseteq\mc{S}$.
\end{proof}

\begin{remark} \label{R:cluster} We saw from Corollary \ref{C:liftcluster} that $\mc{C}(\Delta_e, \b{x}_e;\bs{\sigma})\subseteq (\mc{C}(\Delta, \b{x};\bs{\sigma})_e)_{\ep^\perp}$.
However, we have examples to show that under the assumption of Proposition \ref{P:subalgebra} the inclusion can be strict.
\end{remark}


We define a linear function $\ep_l: \mb{R}^{\Delta_0} \to \mb{R}$ by
$$\ep_l: \g \mapsto \innerprod{\ep, \g\bs{\sigma} }.$$
We define a linear isomorphism $\i_e: \mb{R}^{(\Delta_e)_0} \to \Ker \ep_l$ by
$$\e_v \mapsto \e_v - \innerprod{\ep,\bs{\sigma}(v)}\e_e.$$
By construction the map $\i_e$ is weight-preserving, that is,
$\g\bs{\sigma}_e = \i_e(\g)\bs{\sigma}$, and satisfies that
$i_e (\b{x}_e^{\g}) = \b{x}^{\i_e(\g)}$ ($i_e$ as defined before Definition \ref{D:projector}).
Let $I_e$ be the matrix of the linear transform $\i_e$.
\begin{lemma} \label{L:projB} If $e$ is frozen, then $B(\Delta_e) I_e = B(\Delta)$.
So we have that ${i}_e(y_u') = y_u$ and $\ell_e(y_u)=y_u'$, where $y_u$ and $y_u'$ are the $y$-variables associated to $(\Delta,\b{x})$ and $(\Delta_e,\b{x}_e)$.
\end{lemma}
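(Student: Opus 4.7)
The plan is a direct matrix computation followed by two short applications. By construction, $I_e$ is the $(q-1)\times q$ matrix whose row indexed by $v\neq e$ is $\e_v-\innerprod{\ep,\bs{\sigma}(v)}\,\e_e$; equivalently, $(I_e)_{v,w}=\delta_{v,w}$ for $w\neq e$ and $(I_e)_{v,e}=-\innerprod{\ep,\bs{\sigma}(v)}$. Since $e$ is frozen, the mutable vertices of $\Delta$ and of $\Delta_e$ coincide, and for any mutable $u$ and any $v\neq e$ the arrows between $u$ and $v$ are unaffected by deleting $e$. Hence $b(\Delta_e)_{u,v}=b(\Delta)_{u,v}$ for every $v\neq e$, and reading off the $w$-th column of $B(\Delta_e)I_e$ for $w\neq e$ immediately gives $(B(\Delta_e)I_e)_{u,w}=b(\Delta_e)_{u,w}=b(\Delta)_{u,w}$.

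The heart of the proof is the $e$-th column. There
$(B(\Delta_e)I_e)_{u,e}=-\sum_{v\neq e}b(\Delta_e)_{u,v}\innerprod{\ep,\bs{\sigma}(v)}$.
Applying $\innerprod{\ep,-}$ to the vector identity $\sum_v b(\Delta)_{u,v}\bs{\sigma}(v)=0$, which is the matrix form \eqref{eq:weightconfig} of the weight configuration condition for the mutable vertex $u$, I would obtain $\sum_v b(\Delta)_{u,v}\innerprod{\ep,\bs{\sigma}(v)}=0$. Separating the $v=e$ term and using that $\bs{\sigma}(e)=\ep$ is real, so $\innerprod{\ep,\bs{\sigma}(e)}=\innerprod{\ep,\ep}=1$, this rearranges to $b(\Delta)_{u,e}=-\sum_{v\neq e}b(\Delta)_{u,v}\innerprod{\ep,\bs{\sigma}(v)}$. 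Combined with $b(\Delta_e)_{u,v}=b(\Delta)_{u,v}$ for $v\neq e$, this is exactly the desired equality, completing the identity $B(\Delta_e)I_e=B(\Delta)$.

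For the two consequences, recall $y_u=\b{x}^{-b_u}$ and $y_u'=\b{x}_e^{-b_u'}$, where $b_u,b_u'$ are the $u$-th rows of $B(\Delta),B(\Delta_e)$. The identity just proved, read row-by-row, asserts $b_u'I_e=b_u$. Combining with the weight-preserving property $i_e(\b{x}_e^{\g})=\b{x}^{\i_e(\g)}=\b{x}^{\g I_e}$ noted before Definition \ref{D:projector}, one gets $i_e(y_u')=\b{x}^{-b_u'I_e}=\b{x}^{-b_u}=y_u$. For the second assertion, expand $\ell_e(y_u)=\prod_v \ell_e(x_v)^{-b_{u,v}}$. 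The factor at $v=e$ is $\ell_e(e)^{-b_{u,e}}=(e\cdot e^{-\innerprod{\ep,\ep}})^{-b_{u,e}}=1$, and for $v\neq e$ one has $\ell_e(x_v)=\b{x}_e(v)$ by the definition of $\b{x}_e$. Since $b_{u,v}=b_{u,v}'$ for $v\neq e$, this collapses to $\prod_{v\neq e}\b{x}_e(v)^{-b_{u,v}'}=\b{x}_e^{-b_u'}=y_u'$.

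The calculation is essentially forced once the matrix $I_e$ is written down; there is no real obstacle. The only conceptual point worth flagging is that the reality of $\ep$ is precisely what makes the $v=e$ term in the weight configuration identity contribute $b(\Delta)_{u,e}$ rather than some multiple of it, which is exactly what pins down the $e$-th column of $B(\Delta)$.
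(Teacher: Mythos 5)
Your proof is correct and follows essentially the same route as the paper: the off-$e$ columns are trivially equal, and the $e$-th column is pinned down by pairing the weight-configuration identity $\sum_v b_{u,v}\bs{\sigma}(v)=0$ with $\innerprod{\ep,-}$ and using $\innerprod{\ep,\ep}=1$; the two $y$-variable identities then follow as in the paper (your direct expansion of $\ell_e(y_u)$ is just a more explicit version of the paper's ``apply $\ell_e$'' step). No gaps.
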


\begin{proof} It suffices to show that the $e$-th columns are equal, or equivalently
$$b_{u,e} = \sum_{v\neq e} b_{u,v} I_{v,e}.$$
Since $\sum_v b_{u,v}\bs{\sigma}(v)=0$,
we have that $\sum_{v\neq e} b_{u,v}\bs{\sigma}(v)=b_{u,e}\bs{\sigma}(e)$.
The required identity is obtained by taking $\innerprod{\bs{\sigma}(e),-}$ on the both sides.

By definition, ${i}_e(y_u') = {i}_e(\b{x}_e^{-b_u'})= \b{x}^{-\i_e(b_u')} = \b{x}^{-b_u} = y_u$, where $b_u'$ is the $u$-th row of $B(\Delta_e)$. Applying ${\ell}_e$ to the above equation, we get that $\ell_e(y_u)=y_u'$.
\end{proof}

\noindent Let $\ll_e: \mb{R}^{\Delta_0} \to \mb{R}^{(\Delta_e)_0} $ be the linear map defined by forgetting the $e$-th coordinate.
Then it is a left inverse of $\i_e$ and satisfies that
$\ell_e (\b{x}^{\g}) = \b{x}_e^{\ll_e(\g)}$.
We say that an upper cluster algebra $\uca(\Delta)$ is {\em $\g$-indexed} if
it has a basis whose elements have well-defined distinct $\g$-vectors in $G(\Delta)$ (see Section \ref{s:gv}).

\begin{corollary} \label{C:Gproj_e} Suppose that $e$ is frozen or $B(\Delta)$ has full rank.
If $\uca(\Delta,\b{x};\bs{\sigma})$ is $\g$-indexed, then $\uca(\Delta_e,\b{x}_e;\bs{\sigma}_e)$ has a basis with well-defined $\g$-vectors in $\ll_e(G(\Delta))$. 
Moreover, if $e$ is a projector, then $\uca(\Delta_e,\b{x}_e;\bs{\sigma}_e)$ is $\g$-indexed as well.
\end{corollary}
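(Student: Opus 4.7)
The strategy is to push a $\g$-vector basis of $\mc{A}:=\uca(\Delta,\b{x};\bs{\sigma})$ forward through the surjection $\ell_e\colon\mc{A}\twoheadrightarrow\mc{S}:=\uca(\Delta_e,\b{x}_e;\bs{\sigma}_e)$ supplied by Proposition \ref{P:subalgebra}, and to read off the $\g$-vectors of the images using Lemma \ref{L:projB}.

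I would first handle the case when $e$ is frozen. If $b$ is a basis element of $\mc{A}$ with $\g$-vector $\g$, write $b=\b{x}^{\g}F(y_1,\ldots,y_p)$ with $F$ a rational polynomial not divisible by any $y_u$. Combining $\ell_e(\b{x}^{\g})=\b{x}_e^{\ll_e(\g)}$ with Lemma \ref{L:projB}'s identity $\ell_e(y_u)=y_u'$ yields $\ell_e(b)=\b{x}_e^{\ll_e(\g)}F(y_1',\ldots,y_p')$, and $F$ remains coprime to each $y_u'$. The identity $B(\Delta_e)I_e=B(\Delta)$ forces $B(\Delta_e)$ to inherit full rank from $B(\Delta)$, so $\ll_e(\g)\in\ll_e(G(\Delta))$ is the well-defined $\g$-vector of $\ell_e(b)$ in $\mc{S}$. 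By surjectivity of $\ell_e$, the collection $\{\ell_e(b)\}$ spans $\mc{S}$, and Lemma \ref{L:independent} lets me extract a maximal linearly independent subfamily as a basis, establishing the first claim.

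When $e$ is mutable and $B(\Delta)$ has full rank, I would reduce to the frozen case via Lemma \ref{L:reduce2frozen} by freezing $e$ to form $\mc{S}^\dag:=\uca(\Delta^e,\b{x};\bs{\sigma})$, which satisfies $\mc{S}^\dag_e=\mc{A}_e$ and has full-rank $B$-matrix. One then verifies that $\mc{S}^\dag$ itself inherits a $\g$-indexed basis from $\mc{A}$: this amounts to substituting $y_e=\b{x}^{-b_e}$ into each expression $b=\b{x}^{\g}F(y_1,\ldots,y_p)$ and regrouping so that the presentation is canonical with respect to $\Delta^e$. The frozen-case argument then applies to $\mc{S}^\dag$, and $\mc{S}$ follows from the identification $(\mc{S}^\dag_e)_{\ep^\perp}=(\mc{A}_e)_{\ep^\perp}\cong\mc{S}$.

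For the projector assertion, I would exploit $(\mc{A}_e)_{\ep^\perp}=\mc{A}_{\ep^\perp}$, so that $\mc{S}\subseteq\mc{A}$ and the basis elements of $\mc{A}$ with $\innerprod{\ep,\g\bs{\sigma}}=0$ already span $\mc{S}$ by the graded decomposition $\mc{S}=\bigoplus_{\beta\in\ep^\perp}\mc{A}_\beta$. For such $b$, the map $\ell_e$ acts trivially, so $\ll_e(\g)$ is directly the $\g$-vector of $b$ in $\mc{S}$. The relation $\innerprod{\ep,\g\bs{\sigma}}=\g(e)+\sum_{v\ne e}\g(v)\innerprod{\ep,\bs{\sigma}(v)}$ recovers $\g(e)$ from $\ll_e(\g)$ once the weight constraint $\innerprod{\ep,\g\bs{\sigma}}=0$ is imposed, so $\ll_e$ is injective on this subfamily and the projected $\g$-vectors remain distinct. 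The main obstacle is the reduction step for mutable $e$: verifying that $\mc{S}^\dag$ truly inherits a $\g$-indexed basis requires careful bookkeeping on Laurent expansions under the change of seed from $\Delta$ to $\Delta^e$, and this is where the bulk of the technical work lies.
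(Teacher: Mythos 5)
Your proof follows the paper's argument essentially verbatim in its core: apply $\ell_e$ to a $\g$-indexed basis, use Lemma \ref{L:projB} to write $\ell_e(z)=\b{x}_e^{\ll_e(\g(z))}F(y_1',\dots,y_p')$, and conclude via the surjectivity supplied by Proposition \ref{P:subalgebra}; for the projector claim, both you and the paper pass to the homogeneous basis elements with degree in $\ep^\perp$ and prove injectivity of $\ll_e$ on the corresponding $\g$-vectors --- your formula recovering $\g(e)$ from the constraint $\innerprod{\ep,\g\bs{\sigma}}=0$ is exactly the paper's one-line observation that $\Ker\ll_e=\mb{R}\e_e$ meets $\Ker\ep_l$ trivially. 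The step you defer as ``the bulk of the technical work'' --- verifying that freezing a mutable $e$ preserves the $\g$-indexed presentation --- is not carried out in the paper either: its proof consists only of the frozen-case computation, with the mutable case absorbed into the reduction of Lemma \ref{L:reduce2frozen} already built into Proposition \ref{P:subalgebra}. So you have flagged a genuine subtlety that the paper glosses over, not omitted something the paper actually supplies. Two minor points: extracting a basis from the spanning set $\{\ell_e(b)\}$ requires only linear algebra, not Lemma \ref{L:independent}; and the first claim of the corollary asserts only well-definedness, not distinctness, of the projected $\g$-vectors, which is precisely why the injectivity argument is reserved for the projector case.
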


\begin{proof} Suppose that a basis element $z$ in $\uca(\Delta;\bs{\sigma})$ can be written as \eqref{eq:z}.
Then by Lemma \ref{L:projB}, $\ell_e(z)= \b{x}_e^{\ll_e(\g(z))} F({y}_1',{y}_2',\dots,{y}_p')$.
By Proposition \ref{P:subalgebra}, $\uca(\Delta_e;\bs{\sigma}_e)$ has a basis with well-defined $\g$-vectors in $\ll_e(G(\Delta))$.
Now suppose that $e$ is a projector, then $\uca(\Delta;\bs{\sigma})_{\ep^\perp}=(\uca(\Delta;\bs{\sigma})_e)_{\ep^\perp}=\uca(\Delta_e;\bs{\sigma}_e)$.
We remain to show that $\g$-vectors can be chosen to be distinct for a basis of $\uca(\Delta_e;\bs{\sigma}_e)$.
Since elements with well-defined $\g$-vectors are homogeneous,
$\uca(\Delta;\bs{\sigma})_{\ep^\perp}$ has a basis with $\g$-vectors in $G(\Delta) \cap \Ker \ep_l$.
It suffices to show that $\ll_e$ is injective restricted to $G(\Delta) \cap \Ker \ep_l$.
If $\g\in \Ker \ll_e$, then $\g(u)=0$ for all $u\neq e$. But $\e_e\notin \Ker \ep_l$.
The last statement follows.
\end{proof}

\subsection{The Cases when $\ep$ is not necessary in $\mb{M}$} \label{s:notin}

In general, the cases when $\ep$ is not necessary in $\mb{M}$ is much more complicated.
We can only obtain similar result after imposing more restriction.
However, in what follows we need not assume $\ep$ to be real, so we use $\theta$ instead of $\ep$.

\begin{definition} \label{D:attached}
A set of vertices $\b{u}$ is called {\em attached} to another set of vertices $\b{v}$ if $t(a) \in \b{v}$ (resp. $h(a)\in \b{v}$) whenever $h(a) \in \b{u}$ (resp. $t(a)\in \b{u}$).
For such a pair $(\b{u},\b{v})$, we form a quiver $\Deltauv$ by deleting $\b{v}$ and freezing $\b{u}$.
\end{definition}

Let $\b{v}$ be a set of vertices. We write $\b{x}(\hat{\b{v}})$ for the set of cluster variables $\{\b{x}(v)\mid v\notin\b{v} \}$ in the cluster $\b{x}$.
We also write $\bs{\sigma}(\hat{\b{v}})$ for the restriction of $\bs{\sigma}$ to $\Delta_0\setminus \b{v}$.

\begin{proposition} \label{P:outerproj} Let $\mc{A}:=\uca(\Delta,\b{x};\bs{\sigma})$ with $B(\Delta)$ of full rank.
Suppose that $\theta$ is a weight such that $\innerprod{\theta,\bs{\sigma}(v)}= 0$ for all $v\in \Delta_0$ except for a set of frozen vertices $\b{v}$,
where all $\innerprod{\theta,\bs{\sigma}(v)}<0$ $($or all $>0)$.
Then the graded subalgebra $\mc{A}_{\theta^\perp}$ is equal to $\mc{S}:=\uca(\Deltauv,\b{x}(\hat{\b{v}});\bs{\sigma}(\hat{\b{v}}))$,
where $\b{u}$ is attached to $\b{v}$.
\end{proposition}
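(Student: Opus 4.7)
My plan is to establish both inclusions $\mc{A}_{\theta^\perp} \subseteq \mc{S}$ and $\mc{S} \subseteq \mc{A}_{\theta^\perp}$ by combining a weight-support lemma with a mutation-lifting argument, closely paralleling the proof structure of Proposition~\ref{P:subalgebra}.

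The central ingredient is the weight-support lemma: for any homogeneous $a \in \mc{A}_{\theta^\perp}$ and any cluster $\b{x}^\dag$ of $\mc{A}$ obtained from $\b{x}$ by mutations that avoid $\b{v}$, the Laurent expansion of $a$ in $\b{x}^\dag$ does not involve $\b{x}(v)$ for any $v \in \b{v}$. Indeed, each Laurent monomial $(\b{x}^\dag)^{g}$ in the expansion has weight $\deg(a) \in \theta^\perp$, and because the vertices of $\b{v}$ remain frozen throughout, $g_v \geq 0$ by the definition of $\mc{L}_{\b{x}^\dag}$. Pairing with $\theta$ and using that $\innerprod{\theta, \bs{\sigma}^\dag(w)} = 0$ for all $w \notin \b{v}$ yields $\sum_{v \in \b{v}} g_v \innerprod{\theta, \bs{\sigma}(v)} = 0$, and the common sign of these pairings together with $g_v \geq 0$ forces $g_v = 0$ for all $v \in \b{v}$.

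The second ingredient is a mutation-lifting: each cluster of $\mc{S}$ obtained from $\b{x}(\hat{\b{v}})$ by a mutation sequence in $\Deltauv$ lifts to a cluster of $\mc{A}$ obtained by the same sequence in $\Delta$, with matching non-$\b{v}$ part and with the $\b{v}$-variables unchanged. This reduces to verifying that at each mutable vertex $w$ of $\Deltauv$, the exchange polynomials in $\Delta$ and in $\Deltauv$ coincide, i.e., $w$ has no arrows to or from $\b{v}$ in $\Delta$. The attached property of $\b{u}$ handles the arrows incident to $\b{u}$, and the sign hypothesis together with the weight-configuration identity $\sum_v b_{w,v} \bs{\sigma}(v) = 0$ paired with $\theta$ handles the remaining mutable vertices. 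Combining the two ingredients gives both inclusions: for $\mc{A}_{\theta^\perp} \subseteq \mc{S}$, given $a$ and any cluster $\b{x}^\dag_{\mc{S}}$ of $\mc{S}$, lift to $\b{x}^\dag$ in $\mc{A}$, expand $a$ in $\b{x}^\dag$, and drop the $\b{v}$-variables by the lemma, so $a \in \mc{L}_{\b{x}^\dag_{\mc{S}}}$; intersecting over clusters yields $a \in \mc{S}$. For $\mc{S} \subseteq \mc{A}_{\theta^\perp}$, each cluster variable of $\mc{S}$ is a cluster variable of $\mc{A}$ by lifting, and its $\bs{\sigma}$-weight lies in $\theta^\perp$ by induction on mutation length, since the initial weights do and mutations at mutable vertices of $\Deltauv$ preserve this.

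The main obstacle will be the mutation-lifting step: rigorously verifying that the attached condition and the strict sign hypothesis on the $\innerprod{\theta, \bs{\sigma}(v)}$ together force mutable vertices of $\Deltauv$ to have no arrows to or from $\b{v}$ in $\Delta$, so that the exchange polynomials truly match. This is the crux that distinguishes the external-projection setting here from the cleaner single-projector setting of Proposition~\ref{P:subalgebra}; once established, the rest follows from standard intersection-of-Laurent-polynomial-rings manipulations.
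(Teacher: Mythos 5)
Your skeleton (two inclusions, a sign argument on homogeneous Laurent monomials, and a mutation-lifting claim) matches the paper's, but the proposal omits the step that is actually the heart of the paper's proof: handling the vertices $\b{u}$, which change status from mutable in $\Delta$ to frozen in $\Deltauv$. Recall that in this paper $\mc{L}_{\b{x}}$ requires \emph{polynomiality} in the coefficient variables. For $\mc{A}_{\theta^\perp}\subseteq\mc{S}$, your weight-support lemma only kills the variables $\b{x}(\b{v})$; it gives no constraint on the exponent $g_u$ of $\b{x}(u)$ for $u\in\b{u}$, since $\innerprod{\theta,\bs{\sigma}(u)}=0$ and these variables simply drop out of the pairing. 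Yet membership in $\mc{L}_{\b{x}^\dag_{\mc{S}}}$ requires the expansion to be polynomial in $\b{x}(\b{u})$, and this does not come for free. The paper proves it by expanding $a$ in the adjacent cluster $\mu_u(\b{x})$: because $\b{u}$ is attached to $\b{v}$, the new variable $\b{x}'(u)$ has $\innerprod{\theta,\bs{\sigma}'(u)}$ of the same nonzero sign as the $\innerprod{\theta,\bs{\sigma}(v)}$, $v\in\b{v}$, so every homogeneous Laurent monomial of $a$ in $\mu_u(\b{x})$ carries a non-positive power of $\b{x}'(u)$, and the substitution $\b{x}'(u)^{-1}=\b{x}(u)h^{-1}$ then forces the expansion in $\b{x}$ to be polynomial in $\b{x}(u)$. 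Symmetrically, for $\mc{S}\subseteq\mc{A}_{\theta^\perp}$, lifting cluster variables only yields $\mc{C}(\Deltauv)\subseteq\mc{A}$, which is strictly weaker than what is needed (the remark after the proposition makes exactly this distinction). To get $\uca(\Deltauv)\subseteq\mc{A}$ one must invoke Theorem \ref{T:bounds}: the adjacent clusters of $\Delta$ not covered by your lifting are the $\mu_u(\b{x})$ with $u\in\b{u}$, and there one uses that $s\in\mc{S}$ is polynomial in the now-frozen $\b{x}(u)$ and substitutes $\b{x}(u)=h\,\b{x}'(u)^{-1}$.

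Two further cautions. First, your weight-support lemma is stated for clusters reached by ``mutations that avoid $\b{v}$'', which is vacuous since $\b{v}$ is frozen; it must be mutations avoiding $\b{u}$, because mutating at $u\in\b{u}$ produces a variable with $\innerprod{\theta,\bs{\sigma}'(u)}\neq 0$ and the lemma fails for such seeds. Second, your proposed derivation of the mutation-lifting step does not close: pairing $\sum_v b_{w,v}\bs{\sigma}(v)=0$ with $\theta$ yields only the signed balance $\sum_{v\in\b{v}}b_{w,v}\innerprod{\theta,\bs{\sigma}(v)}=0$, in which the $b_{w,v}$ may have either sign, so it does not force $b_{w,v}=0$ for each $v$. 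The paper treats this lifting as clear; it really rests on the standing assumption that $\b{v}$ is adjacent only to $\b{u}$, which is how $\b{u}$ and $\b{v}$ are chosen in all applications, and which is preserved under mutations away from $\b{u}$. You were right to flag this as the delicate point, but the computation you propose does not establish it.
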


\begin{proof} The same argument as Lemma \ref{L:reduce2frozen} shows that $\mc{S}$ is contained in $\mc{A}$, and thus in $\mc{A}_{\theta^\perp}$.
Indeed, by Theorem \ref{T:bounds} it suffices to show that any $s\in\mc{S}$ can be written as a Laurent polynomial in $\b{x}':=\mu_u(\b{x})$ for each $u\in \b{u}$.
Note that $s$ can be written as a Laurent polynomial in $\b{x}$, which is polynomial in $\b{x}(u)$'s.
Let $h$ be the exchange polynomial $\prod_{u\to v}\b{x}(v) + \prod_{w\to u}\b{x}(w)$ so that $\b{x}(u)=h\b{x}'(u)^{-1}$.
We thus get the required Laurent polynomial in $\b{x}'$.

Conversely, for any homogeneous $a\in \mc{A}_{\theta^\perp}$, we can write $a$ as a Laurent polynomial $p$ in any given extended cluster $\b{x}^\dag \sim \b{x}$ with polynomial coefficient variables.
We assume that $(\Delta^\dag,\b{x}^\dag;\bs{\sigma}^\dag)$ is obtained from $(\Delta,\b{x};\bs{\sigma})$ by mutations away from $\b{u}$.
It is clear that any cluster mutated from the seed $(\Deltauv,\b{x}(\hat{\b{v}}))$ is contained in some $\b{x}^\dag$.
Since $\b{u}$ is attached to $\b{v}$, we still have that $\innerprod{\theta,\bs{\sigma}^\dagger(v)}= 0$ for all $v\in \Delta_0\setminus \b{v}$
and $\innerprod{\theta,\bs{\sigma}(v)}<0$ (or $>0$) for $v\in \b{v}$.
Then we see that any coefficient variables in $\b{x}(\b{v})$ cannot appear in $p$ because otherwise $a \notin \mc{A}_{\theta^\perp}$.

We remain to show that $p$ is polynomial in $\b{x}(u)$ for $u\in \b{u}$.
We write $a$ as a Laurent polynomial $p'$ in the mutated cluster $\b{x}':=\mu_u(\b{x})$.
Since $\b{u}$ is attached to $\b{v}$, the exchange polynomial $h$ contains variables in $\b{x}(\b{v})$.
In particular, $\innerprod{\theta,\bs{\sigma}'(u)}$ has the same sign as $\innerprod{\theta,\bs{\sigma}(v)}$ where $v\in\b{v}$.
We decompose $p'$ as a sum of (homogeneous) Laurent monomials with no cancelations.
We see that $\b{x}'(u)$ must have non-positive degree in each such monomial because otherwise $a \notin \mc{A}_{\theta^\perp}$.
But $\b{x}'(u)^{-1} = \b{x}(u)h^{-1}$ so $p$ must be polynomial in $\b{x}(u)$.
\end{proof}

\begin{remark} Similar to Remark \ref{R:CR1}, we can replace the full rank condition by the condition that $\mc{A}$ is Noetherian and $(\Delta,\b{x})$ is a CR1 seed in $\mc{A}$.
Then by \eqref{eq:CR1} $(\Deltauv,\b{x}(\hat{\b{v}}))$ is also a CR1 seed in $\mc{A}$, so by Lemma \ref{L:RCA} we still have that $\mc{S} \subset \mc{A}$.
The same remark applies to several results depending on Proposition \ref{P:outerproj}, such as
Corollary \ref{C:Gprojv}, Theorem \ref{T:removal}, Theorem \ref{T:hat}, and Corollary \ref{C:removal_model}.
\end{remark}

\begin{remark} Analogous to Remark \ref{R:cluster}, under the assumption of Proposition \ref{P:outerproj}, we only have
$\mc{C}(\Deltauv, \b{x}(\hat{\b{v}});\bs{\sigma}(\hat{\b{v}}))\subseteq \mc{C}(\Delta, \b{x};\bs{\sigma})_{\theta^\perp}$ in general.
\end{remark}

In the situation of Proposition \ref{P:outerproj},
if we define ${p}_{\b{v}}: \mc{A}\to \mc{A}_{\theta^\perp}=\mc{S}$ by evaluation of $\b{x}(\b{v})$ at $0$,
then $p_{\b{v}}$ is a left inverse of the embedding $i_{\b{v}}:\mc{S} \hookrightarrow \mc{A}$.
Similar to Lemma \ref{L:projB}, we have that $i_{\b{v}}(y_u')=y_u$ and $p_{\b{v}}(y_u)=y_u'$, where $y_u'$ is the $y$-variable for $(\Deltauv,\b{x}(\hat{\b{v}}))$.
Let $\pp_{\b{v}}:\mb{R}^{\Delta_0} \to \mb{R}^{(\Delta_{\b{v}})_0}$ and $\i_{\b{v}}: \mb{R}^{(\Delta_{\b{v}})_0}\to \mb{R}^{\Delta_0}$ be the linear maps such that
${p}_{\b{v}}(\b{x}^{\g})=\b{x}(\hat{\b{v}})^{\pp_{\b{v}}(\g)}$ and ${i}_{\b{v}}(\b{x}(\hat{\b{v}})^{\g})=\b{x}^{\i_{\b{v}}(\g)}$.
The proof of the following corollary is similar to (but easier than) that of Corollary \ref{C:Gproj_e}.

\begin{corollary} \label{C:Gprojv} Suppose that we are in the situation of Proposition \ref{P:outerproj}.
If $\uca(\Delta;\bs{\sigma})$ is $\g$-indexed,
then $\uca(\Deltauv;\bs{\sigma}(\hat{\b{v}}))$ is $\g$-indexed as well, and moreover we have that $G(\Deltauv)= \pp_{\b{v}}(G(\Delta))$.
\end{corollary}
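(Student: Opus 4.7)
My plan is to adapt the argument of Corollary \ref{C:Gproj_e}, with the pair $(p_{\b{v}}, i_{\b{v}})$ playing the role of $(\ell_e, i_e)$, Proposition \ref{P:outerproj} replacing Proposition \ref{P:subalgebra}, and the identities $i_{\b{v}}(y_u')=y_u$ and $p_{\b{v}}(y_u)=y_u'$ replacing Lemma \ref{L:projB}. The hypothesis gives a basis $\{z_i\}$ of $\uca(\Delta;\bs{\sigma})$ whose elements are homogeneous of weight $\g(z_i)\bs{\sigma}$, and by Proposition \ref{P:outerproj} the subcollection $\mc{B}:=\{z_i\mid \g(z_i)\bs{\sigma}\in\theta^\perp\}$ is already a $k$-basis of $\mc{S}:=\uca(\Deltauv;\bs{\sigma}(\hat{\b{v}}))=\mc{A}_{\theta^\perp}$, where $\mc{A}:=\uca(\Delta;\bs{\sigma})$.

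To compute the $\g$-vector $\g'(z)$ of $z\in\mc{B}$ in the new seed, I would start from the decomposition $z=\b{x}(\hat{\b{v}})^{\g'(z)}F'(y_1',\ldots,y_p')$ in $\mc{S}$ and apply the embedding $i_{\b{v}}$: since $i_{\b{v}}(y_u')=y_u$, this transports to $z=\b{x}^{\i_{\b{v}}(\g'(z))}F'(y_1,\ldots,y_p)$ in $\mc{A}$. Full rank of $B(\Delta)$ gives uniqueness of the $\g$-vector decomposition there, so $\g(z)=\i_{\b{v}}(\g'(z))$, and the left inverse identity $\pp_{\b{v}}\circ\i_{\b{v}}=\op{id}$ then yields $\g'(z)=\pp_{\b{v}}(\g(z))$. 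The injectivity of $\i_{\b{v}}$ makes the $\g$-vectors $\{\pp_{\b{v}}(\g(z_i))\mid z_i\in\mc{B}\}$ pairwise distinct, so $\uca(\Deltauv;\bs{\sigma}(\hat{\b{v}}))$ is $\g$-indexed.

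For the equality $G(\Deltauv)=\pp_{\b{v}}(G(\Delta))$, the inclusion $\subseteq$ is the same computation applied to an arbitrary (not necessarily basis) element of $\mc{S}$ with well-defined $\g$-vector: such a $z$ sits inside $\mc{A}$ with $\g$-vector $\i_{\b{v}}(\g'(z))$, so $\g'(z)=\pp_{\b{v}}(\g(z))\in \pp_{\b{v}}(G(\Delta))$. The main obstacle is the reverse inclusion $\supseteq$: starting from $\g=\g(z)\in G(\Delta)$ realized by some $z\in\mc{A}$, I must construct an element of $\mc{S}$ whose $\g$-vector is $\pp_{\b{v}}(\g)$. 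The natural candidate is the evaluation $p_{\b{v}}(z)$; using $p_{\b{v}}(y_u)=y_u'$ together with the Laurent expansion $z=\b{x}^\g F(\b{y})=\sum_n c_n\b{x}^{\g-\sum n_jb_j}$, one expects only those monomials with vanishing $\b{v}$-exponent to survive, yielding $p_{\b{v}}(z)=\b{x}(\hat{\b{v}})^{\pp_{\b{v}}(\g)}F'(\b{y}')$. Verifying that the remaining $F'$ is still a rational polynomial not divisible by any $y_u'$ is the delicate step, and it is precisely the sign condition on $\innerprod{\theta,\bs{\sigma}(v)}$ for $v\in\b{v}$ that controls which Laurent monomials survive and keeps $F'$ admissible.
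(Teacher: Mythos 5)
Your outline does match the paper's intended argument---the paper only says the proof is ``similar to (but easier than)'' that of Corollary \ref{C:Gproj_e}, with $(p_{\b{v}},i_{\b{v}})$ in place of $(\ell_e,i_e)$---and your identification of the basis $\mc{B}$ of $\mc{S}=\mc{A}_{\theta^\perp}$ and the $\subseteq$ inclusion via $i_{\b{v}}$ are fine. But the argument is left hanging exactly where the content lies. In your second paragraph you \emph{start} from a decomposition $z=\b{x}(\hat{\b{v}})^{\g'(z)}F'(\b{y}')$ in the new seed; the existence of such a decomposition is precisely what ``$\g$-indexed'' asks you to prove (not every element of an upper cluster algebra admits one), so you cannot assume it and only compute what $\g'(z)$ would have to be. The direction that works is the one of Corollary \ref{C:Gproj_e}: start from $z=\b{x}^{\g}F(\b{y})$ in the old seed and show that this expression already \emph{is} a valid decomposition in the new seed. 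Concretely, $z\in\mc{A}_{\theta^\perp}$ does not involve $\b{x}(\b{v})$ (proof of Proposition \ref{P:outerproj}), and the monomials $\b{x}^{\g-\b{n}B}$ are pairwise distinct by full rank, so every $\b{n}$ in the support of $F$ satisfies $\g(v)=\sum_{u\in\b{u}}n_ub_{u,v}$ for all $v\in\b{v}$ (using that $b_{u,v}=0$ for mutable $u\notin\b{u}$, the same fact underlying $i_{\b{v}}(y_u')=y_u$). Since the rows $\{b_u\}_{u\in\b{u}}$ are supported on $\b{v}$ and linearly independent, $\b{n}|_{\b{u}}$ is constant on the support of $F$, and ``$F$ not divisible by any $y_u$'' forces that constant to be zero; hence $\g|_{\b{v}}=0$ and $F$ involves only $\{y_u\}_{u\notin\b{u}}$, so $z=\b{x}(\hat{\b{v}})^{\pp_{\b{v}}(\g)}F(\b{y}')$ is the required decomposition, and distinctness is immediate.

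The second gap is in the inclusion $\pp_{\b{v}}(G(\Delta))\subseteq G(\Deltauv)$: your candidate $p_{\b{v}}(z)$ does not survive the ``delicate step'' you defer. All Laurent monomials of $z=\b{x}^{\g}F(\b{y})$ have the same degree $\g\bs{\sigma}$, and $\innerprod{\theta,(\g-\b{n}B)\bs{\sigma}}$ is the combination $\sum_{v\in\b{v}}\bigl(\g(v)-\sum_un_ub_{u,v}\bigr)\innerprod{\theta,\bs{\sigma}(v)}$ of the (nonnegative) $\b{v}$-exponents with coefficients all of one sign. So either $\g\bs{\sigma}\in\theta^\perp$ and \emph{every} monomial has vanishing $\b{v}$-exponent (whence $p_{\b{v}}(z)=z$ and the previous paragraph applies), or $\g\bs{\sigma}\notin\theta^\perp$ and \emph{no} monomial survives, i.e.\ $p_{\b{v}}(z)=0$ and the construction produces nothing. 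The evaluation map therefore only realizes $\pp_{\b{v}}(\g)$ for $\g\in G(\Delta)$ with $\g\bs{\sigma}\in\theta^\perp$---which is the set the rest of the proof actually uses---and for general $\g\in G(\Delta)$ you would need a different witness or a reduction to that case.
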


\section{Projections from Semi-invariant Rings} \label{S:SI}

\subsection{Semi-invariant Rings of Quiver Representations}

Let us briefly recall the semi-invariant rings of quiver representations \cite{S1}.
Let $Q$ be a finite quiver without oriented cycles. We fix an algebraically closed field $k$ of characteristic zero.
For a dimension vector $\beta$ of $Q$, let $V$ be a $\beta$-dimensional vector space $\prod_{i\in Q_0} k^{\beta(i)}$. We write $V_i$ for the $i$-th component of $V$.
The space of all $\beta$-dimensional representations is
$$\Rep_\beta(Q):=\bigoplus_{a\in Q_1}\Hom(V_{t(a)},V_{h(a)}).$$
The product of general linear group
$$\GL_\beta:=\prod_{i\in Q_0}\GL(V_i)$$
acts on $\Rep_\beta(Q)$ by the natural base change.
Define $\SL_\beta\subset \GL_\beta$ by
$$\SL_\beta=\prod_{i\in Q_0}\SL(V_i).$$
We are interested in the rings of semi-invariants
$$\SI_\beta(Q):=k[\Rep_\beta(Q)]^{\SL_\beta}.$$
The ring $\SI_\beta(Q)$ has a weight space decomposition
$$\SI_\beta(Q)=\bigoplus_\sigma \SI_\beta(Q)_\sigma,$$
where $\sigma$ runs through the multiplicative {\em characters} of $\GL_\beta$.
We refer to such a decomposition the $\sigma$-grading of $\SI_\beta(Q)$.
Recall that any character $\sigma: \GL_\beta\to k^*$ can be identified with a weight vector
$\sigma \in \mb{Z}^{Q_0}$
\begin{equation*} \label{eq:char} \big(g(i)\big)_{i\in Q_0}\mapsto\prod_{i\in Q_0} \big(\det g(i)\big)^{\sigma(i)}.
\end{equation*}

Let us understand these multihomogeneous components
$$\SI_\beta(Q)_\sigma:=\{f\in k[\Rep_\beta(Q)]\mid g(f)=\sigma(g)f, \forall g\in\GL_\beta \}.$$
Since $Q$ has no oriented cycles, the degree zero component is the field $k$ \cite{Ki}.
For any projective presentation $f: P_1\to P_0$, we view it as an element in the homotopy category $K^b(\proj Q)$ of bounded complexes of projective representations of $Q$.
We view each weight $\sigma \in \mb{Z}^{Q_0}$ as an element in the Grothendieck group $K_0(K^b(\proj Q))$.
Concretely, suppose that $P_1=P(\sigma_1)$ and $P_0=P(\sigma_0)$ for $\sigma_1,\sigma_0\in\mathbb{Z}_{\geq 0}^{Q_0}$, then $\sigma = \sigma_1-\sigma_0$.
Here, we use the notation $P(\sigma)$ for $\bigoplus_{i\in Q_0} \sigma(i) P_i^{}$, where $P_i$ is the indecomposable projective representation corresponding to the vertex $i$.
Let $\sigma$ be a weight such that $\sigma\cdot \beta=0$, and $f$ be a projective presentation of weight $\sigma$.
In \cite{S1} Schofield constructed for each such $f$ a (not necessary non-zero) element in $\SI_\beta(Q)_\sigma$.
For the construction of $s(f)$, we refer the readers to the original text or \cite[Section 1]{Fs1}.
In fact,
\begin{theorem}[\cite{DW1,SV,DZ}] \label{T:inv_span} $s(f)$'s span $\SI_\beta(Q)_{\sigma}$ over the base field $k$.
\end{theorem}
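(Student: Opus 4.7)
The plan is to prove spanning by combining Cauchy's $\GL_\beta$-equivariant decomposition of $k[\Rep_\beta(Q)]$ with an explicit determinantal description of $s(f)$, and then matching the two combinatorially.

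First I would decompose the coordinate ring. Since $\Rep_\beta(Q)=\bigoplus_{a\in Q_1}\Hom(V_{t(a)},V_{h(a)})$, Cauchy's formula gives a $\GL_\beta$-equivariant isomorphism
$$k[\Rep_\beta(Q)]\cong\bigotimes_{a\in Q_1}\bigoplus_{\lambda_a}\mb{S}_{\lambda_a}(V_{t(a)}^*)\otimes \mb{S}_{\lambda_a}(V_{h(a)}).$$
Taking the $\SL_\beta$-invariants of $\GL_\beta$-weight $\sigma$ then picks out, at each vertex $i$, the multiplicity space of $\det^{\sigma(i)}$ inside the tensor product of Schur functors indexed by the arrows incident to $i$. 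By Littlewood-Richardson, these multiplicity spaces admit an explicit basis indexed by compatible tuples $(\lambda_a)$ of tableaux, and summing gives $\dim\SI_\beta(Q)_\sigma$.

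Next I would realize each basis element as an $s(f)$. For a projective presentation $f:P(\sigma_1)\to P(\sigma_0)$ with $\sigma_1-\sigma_0=\sigma$, the hypothesis $\sigma\cdot\beta=0$ makes the induced map $f_V:=\Hom_Q(f,V)\colon \Hom_Q(P(\sigma_0),V)\to \Hom_Q(P(\sigma_1),V)$ a square endomorphism, and $s(f)(V)=\det(f_V)$ is its determinant, visibly $\SL_\beta$-invariant of weight $\sigma$. Expanded in coordinates on $\Rep_\beta(Q)$, this determinant becomes a signed sum of products of minors of the block matrices representing $V$ along the arrows; a careful bookkeeping identifies each product of minors with one of the Littlewood-Richardson basis elements from the previous step. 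By choosing $f$ adapted to each tuple $(\lambda_a)$, one realizes every such basis element.

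The main obstacle will be verifying that this matching is surjective onto the Cauchy count. Two routes handle it. The Schofield--Van den Bergh route reduces to a bipartite quiver via reflection functors, where the Cauchy minors are the classical $\GL_n$-invariants on a matrix space, so surjectivity becomes the First Fundamental Theorem for $\GL_n$; one then checks that reflection functors transport $s(f)$ compatibly back to $Q$. The Derksen--Weyman route argues uniformly using ``virtual'' projective presentations and computes dimensions via the Euler form of $Q$. Either route requires the nontrivial compatibility that the construction of $s(f)$ is natural enough to survive the identifications, which I expect to be the delicate part of the write-up.
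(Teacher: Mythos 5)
The paper does not prove this statement: it is imported wholesale from \cite{DW1,SV,DZ}, so your proposal has to be measured against those references rather than against anything in the text. Your setup is sound and does match the opening moves of the Domokos--Zubkov and Schofield--Van den Bergh arguments: Cauchy's formula gives the $\GL_\beta$-equivariant decomposition of $k[\Rep_\beta(Q)]$, the $\SL_\beta$-invariants of weight $\sigma$ decompose as a sum over tuples $(\lambda_a)$ of multiplicity spaces computed by Littlewood--Richardson, and $s(f)=\det\Hom_Q(f,-)$ is visibly a semi-invariant of weight $\sigma_1-\sigma_0$.

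The gap is in the middle step, and it is exactly the content of the theorem. The assertion that ``a careful bookkeeping identifies each product of minors with one of the Littlewood--Richardson basis elements'' and that ``by choosing $f$ adapted to each tuple $(\lambda_a)$ one realizes every such basis element'' is not an argument, and taken literally it is the wrong picture: an individual $s(f)$ is not concentrated in a single isotypic component of the Cauchy decomposition (its Laplace expansion is a signed sum of products of minors spread over many tuples $(\lambda_a)$), so there is no element-by-element matching of presentations to basis vectors to be had. What the actual proofs establish is that the \emph{span} of all the $s(f)$ exhausts the weight space, and this requires a genuine mechanism: Schofield--Van den Bergh and Domokos--Zubkov reduce by vertex-splitting and polarization (not reflection functors) to a bipartite situation where the First Fundamental Theorem for $\GL_n$ applies, and then track leading terms through a filtration to get back to the original quiver; Derksen--Weyman argue differently, by induction using the multiplicativity of $s(f)$ under exact sequences (the paper's Lemma \ref{L:exact}) together with Schofield's theory of general representations. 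You name both routes but execute neither, so the proposal as written assumes the surjectivity it sets out to prove.
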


Let $\Sigma_\beta(Q)$ be the set of all weights $\sigma$ such that $\SI_\beta(Q)_\sigma$ is non-empty.
It is known that such a weight $\sigma$ corresponds to some dimension vector $^\alpha\sigma$, i.e., $\sigma\cdot(-)=-\innerprod{{^\alpha\sigma},-}_Q$,
where $\innerprod{-,-}_Q$ is the {\em Euler form} of $Q$.
The Euler form induces a form $\innerprod{-,-}$ on the space of weights satisfying
$\innerprod{\sigma,\theta}=\innerprod{{^\alpha\sigma},{^\alpha\theta}}_Q$.
The unimodular lattice $\mb{Z}^{Q_0}$ with this form will be used in our weight configuration later.
The next theorem is an easy consequence of King's stability criterion \cite[Proposition 3.1]{Ki} and Theorem \ref{T:inv_span}.
By a general $\beta$-dimensional representation, we mean in a sufficiently small Zariski open subset (``sufficient" here depends on the context).
Following \cite{S2}, we use the notation $\gamma\hookrightarrow\beta$ to mean that
a general $\beta$-dimensional representation has a $\gamma$-dimensional subrepresentation.

\begin{theorem} \cite[Theorem 3]{DW1} \label{T:cone} We have
$$\Sigma_\beta(Q)=\{\sigma\in \mb{Z}^{Q_0} \mid \sigma\cdot \beta=0 \text{ and } \sigma\cdot \gamma\geq 0 \text{ for all } \gamma\hookrightarrow \beta\}.$$
In particular, $\Sigma_\beta(Q)$ is a pointed saturated monoid.
\end{theorem}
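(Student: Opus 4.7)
The plan is to prove the two inclusions of the equality separately, and then read off the monoid structure. The forward inclusion rests on King's stability criterion, while the reverse inclusion combines King's criterion with Schofield's explicit semi-invariants and the spanning statement of Theorem~\ref{T:inv_span}.

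For the forward direction, suppose $f \in \SI_\beta(Q)_\sigma$ is nonzero. The diagonal scalar matrices $\lambda\cdot\Id \in \GL_\beta$ act trivially on $\Rep_\beta(Q)$ (conjugation by scalars is the identity) but transform $f$ by $\lambda^{\sigma\cdot\beta}$, forcing $\sigma\cdot\beta = 0$. By King's numerical criterion \cite{Ki}, a representation $V$ of dimension $\beta$ is $\sigma$-semistable exactly when $\sigma\cdot\beta = 0$ and $\sigma\cdot\dimbar V'\geq 0$ for every subrepresentation $V'\subseteq V$. The nonvanishing of $f$ at some $V \in \Rep_\beta(Q)$ exhibits a nonempty, hence dense by irreducibility, $\sigma$-semistable open locus. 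A general $\beta$-dimensional $V$ is therefore $\sigma$-semistable, and since by definition every $\gamma\hookrightarrow\beta$ arises as $\dimbar V'$ for some subrepresentation of such a generic $V$, one concludes $\sigma\cdot\gamma\geq 0$ for every $\gamma\hookrightarrow\beta$.

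For the reverse direction, assume $\sigma$ satisfies the two stated conditions. King's criterion immediately gives that a general $V\in\Rep_\beta(Q)$ is $\sigma$-semistable, hence $\SI_\beta(Q)_{n\sigma}\neq 0$ for some $n\geq 1$; the nontrivial content is the refinement to $n=1$, which is the main obstacle of the proof. For this I would invoke the Schofield construction: lift $\sigma$ to a class in $K_0(K^b(\proj Q))$ represented by a projective presentation $f\colon P(\sigma_1)\to P(\sigma_0)$ with $\sigma = \sigma_1 - \sigma_0$. The hypothesis $\sigma\cdot\beta = 0$ forces $\Hom(P(\sigma_0),V)$ and $\Hom(P(\sigma_1),V)$ to have the same dimension, so $s(f)(V):=\det\Hom(f,V)$ defines an element of $\SI_\beta(Q)_\sigma$. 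The decisive step is to exhibit $f$ for which $\Hom(f,V)$ is invertible at a generic $V$: a nontrivial kernel or cokernel would, via Schofield's analysis of general subrepresentations, produce a subrepresentation of $V$ whose dimension vector $\gamma\hookrightarrow\beta$ violates one of the inequalities $\sigma\cdot\gamma\geq 0$. Thus the hypothesized inequalities are precisely what guarantees generic nondegeneracy, yielding a nonzero element of $\SI_\beta(Q)_\sigma$. This geometric content is exactly what underlies Theorem~\ref{T:inv_span}, which I would cite to conclude that nonemptiness of the right-hand set in the theorem coincides with nonvanishing of some $s(f)$.

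Finally, the claim that $\Sigma_\beta(Q)$ is a pointed saturated monoid is formal from the description. Since $0\leq\gamma(i)\leq\beta(i)$ whenever $\gamma\hookrightarrow\beta$, the indexing set $\{\gamma \mid \gamma\hookrightarrow\beta\}$ is finite, so $\Sigma_\beta(Q)$ is the lattice of integer points of the rational polyhedral cone cut out in the hyperplane $\sigma\cdot\beta = 0$ by the finitely many half-spaces $\sigma\cdot\gamma\geq 0$, and is therefore automatically saturated. Pointedness follows because any $\sigma$ with $\pm\sigma$ both in the cone must satisfy $\sigma\cdot\gamma = 0$ for every $\gamma\hookrightarrow\beta$, and the abundance of such $\gamma$ (spanning the relevant sublattice of the support of $\beta$) then forces $\sigma = 0$.
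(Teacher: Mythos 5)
The paper does not actually prove this statement: it is imported verbatim from \cite[Theorem 3]{DW1}, preceded only by the remark that it is ``an easy consequence of King's stability criterion and Theorem \ref{T:inv_span}.'' Your proposal follows exactly that route (King's numerical and GIT criteria for the forward inclusion, Schofield's determinantal semi-invariants plus the spanning theorem for the reverse), so structurally it matches the intended argument, and the forward direction as you write it is complete. You also correctly isolate where the real content lies: King's GIT criterion only yields a nonzero semi-invariant of weight $n\sigma$ for some $n\geq 1$, and the whole point of the theorem is $n=1$. The one substantive soft spot is your ``decisive step.'' The claim that a generic degeneracy of $\Hom(f,V)$ produces some $\gamma\hookrightarrow\beta$ with $\sigma\cdot\gamma<0$ is precisely the hard direction of Schofield's theorem on general representations (\cite{S2}; in the form used by \cite{DW1}, the computation of $\ext(\alpha,\beta)$, equivalently of the generic corank of $\Hom(f,V)$, as a maximum of $-\innerprod{\alpha,\gamma}_Q$ over $\gamma\hookrightarrow\beta$). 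As written you assert this rather than prove it; it is the deep input of the entire theorem and should be named and cited explicitly, since the trivial direction (a bad subrepresentation forces degeneracy) is not what you need.

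Two minor points on the last paragraph. Saturation is indeed formal once the equality of sets is established. For pointedness, ``the abundance of such $\gamma$'' needs justification: either note that a reverse topological ordering of $Q_0$ (sinks first, so $h(a)\le t(a)$ for every arrow) makes each truncation $\gamma^{(k)}=\sum_{j\le k}\beta(j)\e_j$ the dimension vector of a subrepresentation of \emph{every} $V$, and these are triangular, hence span the weight lattice supported on $\op{supp}\beta$; or bypass the spanning argument entirely by observing that if $\pm\sigma\in\Sigma_\beta(Q)$ then a product of nonzero elements of $\SI_\beta(Q)_{\sigma}$ and $\SI_\beta(Q)_{-\sigma}$ lies in $\SI_\beta(Q)_{\bs{0}}=k$, so both factors are units of the polynomial ring $k[\Rep_\beta(Q)]$, forcing $\sigma=0$.
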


Recall that a dimension vector $\alpha$ is called a {\em real} root if $\innerprod{\alpha,\alpha}_Q=1$.
It is called {\em Schur} if $\Hom_Q(M,M)=k$ for
$M$ general in $\Rep_\alpha(Q)$. For a real Schur root $\alpha$, $\Rep_{n\alpha}(Q)$ has a dense orbit for any $n\in \mb{N}$.

\begin{lemma}[{\cite[Lemma 1.7]{Fs1}}] \label{L:gsreal}
If $\alpha$ is a real Schur root, then $\SI_\beta(Q)_{-\innerprod{n\alpha,-}_Q}\cong k$ for any $n\in \mb{N}$.
\end{lemma}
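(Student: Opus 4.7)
The plan is to combine the Schofield description of semi-invariants (Theorem \ref{T:inv_span}) with the existence of a dense $\GL_{n\alpha}$-orbit in $\Rep_{n\alpha}(Q)$ coming from the real Schur hypothesis on $\alpha$. First I would unpack Theorem \ref{T:inv_span} in this weight: every element of $\SI_\beta(Q)_\sigma$ with $\sigma:=-\innerprod{n\alpha,-}_Q$ is a $k$-linear combination of Schofield semi-invariants $s(f)$, where $f\colon P_1\to P_0$ is a projective presentation of weight $\sigma$. After stripping off contractible summands of $f$, such a presentation is the minimal one of some module $M$ with $\dimbar M=n\alpha$, and $s(f)$ depends on $M$ only through its isomorphism class and an overall scalar; so the weight space is linearly spanned by the family $\{s(f_M):M\in\Rep_{n\alpha}(Q)\}$.

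The second step is to package this family into a single morphism of varieties
\[
\Psi\colon \Rep_{n\alpha}(Q)\longrightarrow \SI_\beta(Q)_\sigma,\qquad M\longmapsto s(f_M).
\]
Fixing projective representations $P_0,P_1$ of suitable dimensions and letting only the differential vary, the explicit determinantal formula for $s(f)(N)$ (cf.\ \cite[Section 1]{Fs1}) realizes $\Psi$ as a morphism of varieties. By construction $\Psi$ is $\GL_{n\alpha}$-equivariant up to a one-dimensional character of $\GL_{n\alpha}$, so each $\GL_{n\alpha}$-orbit is mapped into a single line in the target.

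Because $\alpha$ is a real Schur root, $\Rep_{n\alpha}(Q)$ contains the dense orbit $\GL_{n\alpha}\cdot(nM_\alpha)$. Applying the previous observation to this orbit, the image of $\Psi$ on it lies in a single line $L\subset \SI_\beta(Q)_\sigma$, and since $\Psi$ is a morphism the entire image is contained in $L$. Combined with the spanning statement of Theorem \ref{T:inv_span}, this forces $\dim_k \SI_\beta(Q)_\sigma\le 1$; whenever $\sigma$ lies in the weight monoid $\Sigma_\beta(Q)$, the space is nonzero and hence $\cong k$.

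The main obstacle is the clean assembly of $\Psi$ into a genuine morphism: presentations of weight $\sigma$ associated to different modules $M$ may have different minimal pairs $(P_0,P_1)$, and one has to account for contractible summands $P_j \xrightarrow{=} P_j$ that do not change $s(f)$ but shift the ambient parametrization. The cleanest way is to work in $K^b(\op{proj} Q)$, where homotopy equivalence absorbs these issues, and to note that adding trivial summands only rescales $s(f)$, hence does not enlarge the image of $\Psi$ beyond $L$.
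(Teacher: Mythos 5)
The paper does not actually prove this lemma---it is imported verbatim from \cite[Lemma 1.7]{Fs1}---but your argument is correct and is essentially the standard one behind that citation: span $\SI_\beta(Q)_\sigma$ by the Schofield semi-invariants $c^M$ indexed by $M\in\Rep_{n\alpha}(Q)$ (after the routine reduction from arbitrary presentations of weight $\sigma$ to minimal ones, which you rightly flag as the only technical point), note that $M\mapsto c^M$ is a morphism carrying each $\GL_{n\alpha}$-orbit into a line, and use the dense orbit through $nE$ together with continuity to conclude the whole image lies in $k\,s(f_E)^n$. Your closing caveat is also the correct reading of the statement: the argument literally gives $\dim\le 1$, with equality exactly when the weight lies in $\Sigma_\beta(Q)$, which is the situation in which the lemma is invoked (e.g.\ in Proposition \ref{P:extremal_proj}).
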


\begin{lemma}\cite[Lemma 1]{DW1} \label{L:exact} Suppose that we have an exact sequence of representations of $Q$
$$0\to L \to M \to N\to 0.$$
If $\innerprod{\dimbar L,\beta}=\innerprod{\dimbar N,\beta}=0$, then as a function on $\Rep_\beta(Q)$, $s(f_M)$ is, up to a scalar, equal to $s(f_L)s(f_N)$, where $f_M$ is the minimal presentation of $M$.
If $\innerprod{\dimbar L,\beta}> 0$ or $\innerprod{\dimbar N,\beta}< 0$,
then $s(f_M)=0$.
\end{lemma}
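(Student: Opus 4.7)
My plan is to apply the horseshoe lemma to the short exact sequence $0 \to L \to M \to N \to 0$. Starting from the minimal projective presentations $f_L \colon P_1^L \to P_0^L$ and $f_N \colon P_1^N \to P_0^N$, the horseshoe construction yields a (generally non-minimal) projective presentation of $M$ of the block form
\[
f_M' = \begin{pmatrix} f_L & \phi \\ 0 & f_N \end{pmatrix} \colon P_1^L \oplus P_1^N \longrightarrow P_0^L \oplus P_0^N,
\]
where the connecting map $\phi \colon P_1^N \to P_0^L$ is obtained by lifting the projective cover $P_0^N \to N$ to a map $P_0^N \to M$ via projectivity of $P_0^N$ and reading off the induced map on kernels. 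Since $f_M'$ and the minimal $f_M$ differ only by contractible summands of the form $\op{id}_P \colon P \to P$, and the Schofield construction $s$ assigns such summands the determinantal factor $1$, we have $s(f_M) = s(f_M')$ up to a unit.

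Next I would apply the contravariant functor $\Hom_Q(-, V)$ to $f_M'$. By additivity, this produces the block triangular matrix
\[
\Hom_Q(f_M', V) = \begin{pmatrix} \Hom_Q(f_L, V) & 0 \\ \Hom_Q(\phi, V) & \Hom_Q(f_N, V) \end{pmatrix},
\]
whose diagonal blocks have sizes controlled by $\innerprod{\dimbar L, \beta}$ and $\innerprod{\dimbar N, \beta}$ via $\dim \Hom_Q(P_0^L, V) - \dim \Hom_Q(P_1^L, V) = \innerprod{\dimbar L, \beta}$ and similarly for $N$. Under the hypothesis $\innerprod{\dimbar L, \beta} = \innerprod{\dimbar N, \beta} = 0$ both diagonal blocks are square, and the block triangular determinant formula gives $s(f_M)(V) = \pm s(f_L)(V) \cdot s(f_N)(V)$ on the nose, establishing the multiplicative claim.

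For the vanishing assertion I would argue on how the rectangularity of the diagonal blocks propagates. If $\innerprod{\dimbar L, \beta} + \innerprod{\dimbar N, \beta} \neq 0$ then $\sigma_M \cdot \beta \neq 0$, so $\Hom_Q(f_M, V)$ is itself non-square and $s(f_M) = 0$ by the standard convention on the Schofield determinant. In the remaining sub-case the two imbalances cancel, making $\Hom_Q(f_M', V)$ square but with rectangular diagonal blocks; here I would use the forced non-trivial kernel of one rectangular block (when it has more columns than rows) or the forced deficient image of the other (when it has more rows than columns), combined with the block triangular shape, to exhibit either a non-trivial kernel or a deficient image of the full matrix, and thus conclude $\det \Hom_Q(f_M', V) \equiv 0$. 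The main obstacle I anticipate is making this rank/kernel analysis work cleanly in the canceling sub-case without letting the off-diagonal $\Hom_Q(\phi, V)$ compensate for the rectangularity; the key input should be the compatibility between $\phi$ and the minimality of $f_L, f_N$ coming from the horseshoe construction.
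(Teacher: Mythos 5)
The paper offers no proof of this lemma (it is imported from \cite[Lemma 1]{DW1}), so I am judging your argument on its own. Your treatment of the multiplicative case is correct and is essentially the standard one: over the hereditary algebra $kQ$ the horseshoe lemma does produce the block-triangular presentation $f_M'$, the discrepancy with the minimal $f_M$ is a contractible summand contributing a nonzero constant, and when $\innerprod{\dimbar L,\beta}=\innerprod{\dimbar N,\beta}=0$ both diagonal blocks of $\Hom_Q(f_M',V)$ are square, so the determinant factors.

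The gap is in the vanishing case, and it is exactly the obstacle you flagged: a square block-triangular matrix with rectangular diagonal blocks need \emph{not} be singular. For example the identity $\left(\begin{smallmatrix}1&0&0\\0&1&0\\0&0&1\end{smallmatrix}\right)$ can be written as $\left(\begin{smallmatrix}A&0\\B&C\end{smallmatrix}\right)$ with $A$ of size $1\times 2$ and $C$ of size $2\times 1$; the off-diagonal block does compensate. So no amount of rank counting on the diagonal blocks alone can finish the argument. What saves the day is the location of the zero block. In your matrix $\left(\begin{smallmatrix}\Hom(f_L,V)&0\\ \Hom(\phi,V)&\Hom(f_N,V)\end{smallmatrix}\right)$ the summand $\Hom(P_0^N,V)$ of the source is carried into $\Hom(P_1^N,V)$ by $\Hom(f_N,V)$ \emph{alone}, whose kernel is $\Hom(N,V)$, and the induced map on the complementary quotients is $\Hom(f_L,V)$, whose cokernel is $\Ext(L,V)$. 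Invertibility of the whole matrix therefore forces $\Hom(N,V)=0$ and $\Ext(L,V)=0$; equivalently, use $\Hom(N,V)\hookrightarrow\Hom(M,V)$, $\Ext(M,V)\twoheadrightarrow\Ext(L,V)$ and $s(f_M)(V)\neq 0\iff\Hom(M,V)=\Ext(M,V)=0$, bypassing the matrix entirely. Since $\hom(N,V)\geq\innerprod{\dimbar N,\beta}$ and $\ext(L,V)\geq-\innerprod{\dimbar L,\beta}$, this proves vanishing whenever $\innerprod{\dimbar N,\beta}>0$ or $\innerprod{\dimbar L,\beta}<0$. Note that these are the inequalities with $L$ and $N$ interchanged relative to the statement as printed; the printed version fails, e.g.\ for $Q\colon 1\to 2\to 3$, $\beta=(1,2,1)$, $M=(k\xrightarrow{1}k\to 0)$ with submodule $L=S_2$ and quotient $N=S_1$, one has $\innerprod{\dimbar L,\beta}=1>0$ while $s(f_M)(V)$ is the $1\times 1$ determinant of the composite $V_1\to V_2\to V_3$, generically nonzero. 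So: run the forced-kernel argument on the $N$-block and the forced-cokernel argument on the $L$-block, and correct the inequalities accordingly; applied to the opposite blocks, as your symmetric phrasing permits, the argument collapses for precisely the reason you anticipated.
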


\subsection{Orthogonal Projections}

For two dimension vector $\alpha,\beta$, we write $\alpha \perp \beta$ if $\Hom_Q(M,N)=\Ext_Q(M,N)=0$ for $M,N$ general in $\Rep_\alpha(Q)$ and $\Rep_\beta(Q)$.
We recall that an {\em exceptional sequence of dimension vector} $\mb{E}:=\{\ep_1,\ep_2,\dots, \ep_n\}$ is a sequence of real Schur roots of $Q$ such that $\ep_i\perp \ep_j$ for any $i<j$.
It is called {\em quiver} if $\innerprod{\ep_j,\ep_i}_Q\leq 0$ for any $i<j$.
It is called {\em complete} if $n=|Q_0|$.
The {\em quiver} $Q(\mb{E})$ of a quiver exceptional sequence $\mb{E}$ is by definition the quiver with vertices labeled by $\ep_i$ and $\ext_Q(\ep_j,\ep_i)$ arrows from $\ep_j$ to $\ep_i$.
According to \cite[Theorem 4.1]{S2}, $\ext_Q(\ep_j,\ep_i)=-\innerprod{\ep_j,\ep_i}_Q$.

Given an exceptional sequence of dimension vector $\mb{F}=\{\vep_1,\vep_2,\dots,\vep_r \}$ with $F_i$ general in $\Rep_{\vep_i}(Q)$,
we consider the (right) {\em orthogonal subcategory} $\mb{F}^\perp$ defined by
$$\mb{F}^\perp:=\{M\in\Rep(Q) \mid F_i\perp M \text{ for } i=1,2,\dots,r \}.$$
According to \cite{S1}, this abelian subcategory is equivalent to the module category of another quiver denoted by $Q_{\mb{F}}$.
We say that $Q$ {\em projects} to $Q_{\mb{F}}$ through $\mb{F}$.
In this case there is a unique quiver exceptional sequence $\mb{E}=\{\ep_1,\ep_2,\dots, \ep_{|Q_0|-r}\}$ of $Q$ such that
the concatenation of two sequences $(\mb{F},\mb{E})$ is a (complete) exceptional sequence and $Q(\mb{E})\cong Q_{\mb{F}}$.
We refer to \cite{S1} for the construction of $\mb{E}$.

We define a linear isometry (with respect to the Euler forms) $\iota_{\mb{F}}: K_0(Q_{\mb{F}})\to K_0(Q)$ by
\begin{equation} \label{eq:dimemb} \beta' \mapsto \sum \beta'(i)\ep_i.\end{equation}
Conversely we define a linear map $\pi_{\mb{F}}$ left inverse to $\iota_{\mb{F}}$ as the composition $K_0(Q)\to K_0(\mb{F}^\perp) \cong K_0(Q_{\mb{F}})$,
where the first map $\beta_0:=\beta \mapsto \beta_r$ is given by the recursion
$$\beta_{i+1} = \beta_i- \innerprod{\vep_{i+1},\beta_i}_Q \vep_{i+1}.$$
Both maps induce a linear map on the weights, still denoted by $\iota_{\mb{F}}$ and $\pi_{\mb{F}}$.
We write $\beta_{\mb{F}}$ and $\sigma_\mb{F}$ for $\pi_{\mb{F}}(\beta)$ and $\pi_{\mb{F}}(\sigma)$.
To effectively compute $\sigma_\mb{F}$ later, we can use the formulae
\begin{equation} \label{eq:wtproj} \sigma_\mb{F}(i) = \sigma\cdot \ep_i.
\end{equation}


\begin{theorem}[{\cite[Theorem 2.39]{DW2}}] \label{T:SIemb} Let $\ep$ be a real Schur root such that $\ep\perp\beta$.
Then we have that $\SI_{\beta_\ep}(Q_{\ep})\cong \SI_{\beta}(Q)_{\ep^\perp}$.
\end{theorem}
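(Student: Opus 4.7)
The plan is to use Schofield's semi-invariants (Theorem \ref{T:inv_span}) to reduce the statement to a matching of projective presentations on the two sides. First, because $\iota_\ep$ is a linear isometry with image $\ep^\perp \subseteq K_0(Q)$, every weight $\sigma \in \ep^\perp$ with $\sigma \cdot \beta = 0$ is uniquely of the form $\iota_\ep(\sigma_\ep)$, and the isometry together with $\ep \perp \beta$ gives $\sigma \cdot \beta = \sigma_\ep \cdot \beta_\ep$. Hence the linear constraint cutting out $\Sigma_\beta(Q) \cap \ep^\perp$ pulls back precisely to the one cutting out $\Sigma_{\beta_\ep}(Q_\ep)$, matching the graded pieces combinatorially.

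Next I would lift each minimal projective presentation $f' : P_1' \to P_0'$ in $Q_\ep$ of weight $\sigma_\ep$ to a (not necessarily minimal) presentation $f : P_1 \to P_0$ in $Q$ of weight $\iota_\ep(\sigma_\ep) = \sigma$. The lift exploits the equivalence of abelian categories $\mb{F}^\perp \cong \Rep(Q_\ep)$ with $\mb{F} = \{\ep\}$: each indecomposable projective $P_i'$ of $Q_\ep$ corresponds under the equivalence to a module $\widetilde{P}_i \in \mb{F}^\perp$, which admits a distinguished short $Q$-projective resolution whose class in $K_0(K^b(\proj Q))$ equals $\iota_\ep(\dimbar P_i')$. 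Assembling these resolutions termwise produces the required $f$.

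The crux is then to show that on the nonempty open subset $U \subseteq \Rep_\beta(Q)$ of representations $M$ satisfying $\Hom_Q(E, M) = \Ext_Q(E, M) = 0$, with $E$ the exceptional representation of dimension $\ep$, the Schofield determinant $s(f)(M)$ coincides, up to a nowhere-vanishing scalar depending only on the lift, with $s(f')(M_\ep)$ under the categorical equivalence $M \mapsto M_\ep$. Multiplicativity of Schofield invariants (Lemma \ref{L:exact}) then upgrades the resulting map $\SI_{\beta_\ep}(Q_\ep)_{\sigma_\ep} \to \SI_\beta(Q)_\sigma$ to a graded ring homomorphism. Surjectivity follows from Theorem \ref{T:inv_span} applied on the $Q$-side: every Schofield invariant of weight in $\ep^\perp$ arises this way, because its minimal presentation, once trivial contributions from $E$-related summands are pruned away using $\ep \perp \beta$ (together with Lemma \ref{L:gsreal} to kill multiples of $\ep$), descends to a presentation in $Q_\ep$. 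Injectivity is a density argument: $U$ is open dense in $\Rep_\beta(Q)$, so any regular function vanishing on $U$ vanishes identically.

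The main obstacle will be verifying the identification $s(f)(M) = c \cdot s(f')(M_\ep)$ on $U$. The Schofield invariant is the determinant of the $k$-linear map $\Hom_Q(P_0, M) \to \Hom_Q(P_1, M)$ induced by $f$, and under the equivalence $\mb{F}^\perp \cong \Rep(Q_\ep)$ these Hom-spaces need not coincide with their $Q_\ep$-counterparts. One must track the canonical long exact sequence coming from the embedding $\mb{F}^\perp \hookrightarrow \Rep(Q)$, analyze how the induced block structure on the Hom-pairings factors the determinant, and argue that the resulting correction factors combine into a single nonzero multiplicative constant independent of $M \in U$. This bookkeeping is where most of the technical work in \cite{DW2} lives.
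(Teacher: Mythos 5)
First, a point of reference: the paper does not prove Theorem \ref{T:SIemb} at all --- it is imported verbatim from \cite[Theorem 2.39]{DW2} --- so there is no internal argument to compare yours against. Your outline does follow the standard Schofield/Derksen--Weyman route (identify the weight lattices via the isometry $\iota_\ep$, lift presentations along the equivalence $E^\perp\simeq\Rep(Q_\ep)$, and match Schofield determinants on the dense open set $U$ of representations lying in $E^\perp$), and the lattice bookkeeping in your first paragraph is correct.

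Two substantive issues remain. The determinant identity $s(f)(M)=c\cdot s(f')(M_\ep)$ is essentially the whole content of the theorem, and you explicitly defer it; as written this is a plan rather than a proof. More concretely, the surjectivity step would fail as stated: a general representation $V$ of dimension $\alpha={}^\alpha\sigma$ with $\innerprod{\ep,\alpha}_Q=0$ need \emph{not} lie in $E^\perp$, and $E$ is typically not a direct summand of $V$ (generically $\hom_Q(E,V)$ and $\ext_Q(E,V)$ are equal and nonzero), so there are no ``$E$-related summands'' to prune, and Lemma \ref{L:gsreal} is not the relevant tool. What is actually needed is the adjunction triangle $V_1\to V\to V_0$ from the semiorthogonal decomposition $D^b(Q)=\langle E^\perp,\langle E\rangle\rangle$, with $V_1$ in the subcategory generated by $E$ and $V_0\in E^\perp$: for $M\in E^\perp$ the complex $\Hom^\bullet(V_1,M)$ vanishes, and the hypothesis $\innerprod{\ep,\alpha}_Q=0$ is exactly what forces $[V_1]=0$ in $K_0(Q)$, so that $V_1$ contributes only an invertible constant to $\det\Hom(f_V,-)$ and $s(f_V)|_U$ descends to the invariant of $V_0$ on $\Rep_{\beta_\ep}(Q_\ep)$. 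Finally, to promote the pointwise identity on $U$ to a well-defined linear (and then ring) isomorphism, you should record that all weight-$\sigma_\ep$ semi-invariants rescale by a common factor under a change of basis of $M_\ep$, so linear relations among the $s(f')$ transfer to the lifted $s(f)$ on $U$ and hence, by density of $U$, on all of $\Rep_\beta(Q)$.
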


\begin{proposition} \label{P:extremal_proj} Let $\ep$ is a real root of $Q$. Then $^\sigma \ep$ is extremal in $\cone$ if and only if $^\sigma\ep$ is a projector for $\SI_\beta(Q)$.
\end{proposition}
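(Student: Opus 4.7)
The plan is to prove the two implications separately, applying Proposition \ref{P:projector}(3) in one direction and combining Schofield's generic decomposition theory with Lemma \ref{L:exact} in the other. Throughout, let $A := \SI_\beta(Q)$.

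For \emph{projector implies extremal}, I would verify the three hypotheses of Proposition \ref{P:projector}(3). First, $A$ is a UFD: since $k[\Rep_\beta(Q)]$ is a polynomial ring, hence factorial, and the structure group $\SL_\beta$ has trivial character group, the ring of invariants $A$ is factorial. Second, $A_{\b{0}} = k$ because $Q$ has no oriented cycles. Third, every unit of $A$ lies in $k$: the grading is by the pointed monoid $\Sigma_\beta(Q)$ (Theorem \ref{T:cone}), and in a domain graded by a pointed monoid with scalar degree-zero part, units must themselves have weight zero. Proposition \ref{P:projector}(3) then yields the extremality of $^\sigma\ep$.

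For \emph{extremal implies projector}, I first observe that $^\sigma\ep \in \Sigma_\beta(Q)$ forces $\innerprod{\ep,\beta}_Q = 0$, and combined with $\innerprod{\ep,\ep}_Q = 1$ this makes $\ep$ a real Schur root. By Lemma \ref{L:gsreal}, $A_{n\,^\sigma\ep}$ is one-dimensional, spanned by $e^n$ where $e := s(f_E)$ for a general $E \in \Rep_\ep(Q)$. I plan to verify the projector condition through Lemma \ref{L:projector}: for any homogeneous $f \in A_\alpha$ with $d := \innerprod{^\sigma\ep,\alpha} > 0$, show that $f = e^d f'$ for some $f' \in A_{(^\sigma\ep)^\perp}$. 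By Theorem \ref{T:inv_span} it suffices to treat spanning elements $f = s(f_M)$ with $\dimbar M = \gamma$ satisfying $\innerprod{\ep,\gamma}_Q = d$. The core of the argument is to show, using extremality, that a general $M \in \Rep_\gamma(Q)$ admits a canonical exact sequence
$$0 \to dE \to M \to N \to 0$$
coming from the Schofield perpendicular decomposition, with $\dimbar N = \gamma - d\ep$ lying in $\ep^\perp$. Since $\ep\perp\beta$ gives $\innerprod{d\ep,\beta}_Q = \innerprod{\dimbar N,\beta}_Q = 0$, Lemma \ref{L:exact} yields $s(f_M) = e^d \cdot s(f_N)$ up to a scalar, and $s(f_N)$ has weight $\alpha - d\,^\sigma\ep \in (^\sigma\ep)^\perp$; a density argument inside Theorem \ref{T:inv_span} extends the factorization to arbitrary $f \in A_\alpha$.

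The main obstacle is establishing the canonical exact sequence, where the role of extremality is essential. Without it, the generic decomposition of $\gamma$ could include $\ep$ with a multiplicity $a \ne d$ or involve summands producing nonzero $\Ext_Q(E,-)$, either of which would prevent the clean extraction of $e^d$ from $s(f_M)$. I plan to close this gap by a careful analysis of the Schofield decomposition, showing that any obstruction to the desired exact sequence translates into a decomposition of $^\sigma\ep$ inside $\cone$ that violates the extreme-ray hypothesis; irreducibility of $e$ in $A$, an immediate consequence of extremality plus the primitivity of the real root $\ep$, is the algebraic counterpart that makes this reduction possible.
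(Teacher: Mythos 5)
Your overall strategy matches the paper's: the forward direction via the UFD property of $\SI_\beta(Q)$ and Proposition \ref{P:projector}(3), the converse by exhibiting copies of the exceptional representation $E$ inside a general representation and factoring $s(f_M)$ with Lemma \ref{L:exact}. However, two steps in the converse direction are genuinely missing. First, your claim that $\innerprod{\ep,\beta}_Q=0$ together with $\innerprod{\ep,\ep}_Q=1$ ``makes $\ep$ a real Schur root'' is false: a real root need not be Schur, and orthogonality to $\beta$ does not help. Schur-ness is needed before you may invoke Lemma \ref{L:gsreal} (and before $e=s(f_E)$ is even well defined up to scalar), and it has to be derived from extremality: if a general $E\in\Rep_\ep(Q)$ decomposed as $L\oplus N$, Lemma \ref{L:exact} would give $s(f_E)=s(f_L)s(f_N)\neq 0$, so the weights of $f_L$ and $f_N$ would lie in $\cone$ and sum to the extremal ray $^\sigma\ep$, forcing both to be positive multiples of $^\sigma\ep$ --- impossible since $\ep$ is real, hence primitive.

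Second, the ``main obstacle'' you name --- producing the exact sequence $0\to dE\to M\to N\to 0$ for general $M$ --- is exactly the substance of the proof, and your proposed closure (``any obstruction translates into a decomposition of $^\sigma\ep$ violating extremality'') covers only half of it. Extremality does show that the generic rank of a map $hE\to V$, for $V$ general of dimension $\gamma$ and $h=\hom_Q(\ep,\gamma)\geq d>0$, is a multiple $h'\ep$ of $\ep$: otherwise kernel and image would split $h\,{^\sigma\ep}$ into two non-proportional weights of $\cone$. But proving $h'=h$, i.e.\ that the general map $hE\to V$ is injective, is not an extremality argument: one applies $\Hom_Q(E,-)$ to $0\to h'E\to V\to W\to 0$ to get $\hom_Q(E,W)=h-h'$, and if $h>h'$ a nonzero map $(h-h')E\to W$ lets one enlarge the image of a general map $hE\to V$, contradicting genericity of the rank. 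Without this step the factorization $s(f_M)=e^{d}s(f_N)$ is unproved. Two smaller points: the multiplicity that naturally comes out is $h=\hom_Q(\ep,\gamma)$, which may exceed $d=\innerprod{\ep,\gamma}_Q$ (harmless, since a factor $e^h$ still yields the factor $e^d$ demanded by Lemma \ref{L:projector}); and what you actually know is $\innerprod{\ep,\beta}_Q=0$ rather than the stronger $\ep\perp\beta$, though only the Euler-form vanishing is needed to apply Lemma \ref{L:exact}.
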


\begin{proof} By \cite[Theorem 3.17]{PV}, any $\SI_\beta(Q)$ is a UFD.
Since $\SI_\beta(Q)_{\b{0}}=k$ and $\Sigma_\beta(Q)$ is pointed, the direction $``\Leftarrow"$ follows from Proposition \ref{P:projector}.(3). Conversely, let us first show that $\ep$ is in fact Schur.
If not, then a general representation $E$ in $\Rep_\ep(Q)$ decomposes as $E=L\oplus N$.
By Lemma \ref{L:exact}, we must have that $s(f_E)=s(f_L)s(f_N)\neq 0$.
Since $^\sigma \ep$ is extremal, the weights of $f_L$ and $f_N$ must lie on $\mb{R}^+ {^\sigma}\ep$.
But $\ep$ is real, so it is clearly impossible.

For any dimension vector $\alpha$ such that $\alpha\perp\beta$ and $\innerprod{\ep,\alpha}_Q>0$, we must have that $h\ep\hookrightarrow \alpha$ where $h=\hom_Q(\ep,\alpha)$.
Indeed, let $\gamma$ be the rank of a general morphism $f$ from a general representation $U\in\Rep_{h\ep}(Q)$ to a general representation $V\in\Rep_{\alpha}(Q)$.
Note that $h\geq \innerprod{\ep,\alpha}_Q>0$.
We claim that $\gamma = h\ep$.
We first observe that $\gamma$ must be a multiple of $\ep$, say $\gamma = h'\ep$.
Otherwise by Lemma \ref{L:exact} the weights of $\Ker(f)$ and $\Img(f)$ lie in $\cone$ contradicting $^\sigma\ep$ being extremal.
Now we have an exact sequence
$0\to h'E\xrightarrow{f\mid_{h'E}} V \to W \to 0$ where $E$ is the exceptional representation of dimension $\ep$.
By applying $\Hom_Q(E,-)$, we find that $\hom_Q(E,W)=h-h'$.
Since $\ep$ is real Schur, a general representation of dimension $h\ep$ is isomorphic to $hE=h'E \oplus (h-h')E$.
If $h>h'$, then there is a nontrivial homomorphism from $(h-h')E$ to $W$.
We can thus construct a homomorphism from $hE$ to $V$ with image strictly containing $f(h'E)$.
So we must have that $h=h'$.

Now by Theorem \ref{T:inv_span}, any semi-invariant function of weight $^\sigma \alpha$ is a linear combination of $s(f)$'s,
where $f$ is the minimal presentation of a general representation in $\Rep_\alpha(Q)$.
By Lemma \ref{L:exact}, any such $s(f)$ has a factor $s(f_E)^h$ in $\SI_\beta(Q)$.
We conclude that $^\sigma\ep$ is a projector from Lemma \ref{L:gsreal} and \ref{L:projector}.
\end{proof}

We say that a semi-invariant ring $\SI_\beta(Q)$ is a {\em naturally graded} upper cluster algebra $\uca(\Delta,\b{x}; \bs{\sigma})$ if $\SI_\beta(Q)\cong \uca(\Delta,\b{x}; \bs{\sigma})$ and $\bs{\sigma}(v)$ is the $\sigma$-weight of $\b{x}(v)$ under the isomorphism.
\begin{theorem} \label{T:invGUCA} Suppose that $\SI_\beta(Q)$ is a naturally graded UCA $\uca(\Delta,\b{x}; \bs{\sigma})$, and $B(\Delta)$ has full rank.
If $\bs{\sigma}(e)={^\sigma}\ep$ is real and extremal in $\cone$,
then $\ep$ is Schur and $\SI_{\beta_\ep}(Q_\ep)$ is the naturally graded UCA $\uca(\Delta_e,\b{x}_e; \pi_\ep(\bs{\sigma}))$.
\end{theorem}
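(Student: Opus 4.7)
The plan is to combine three ingredients: the characterisation of projectors in $\SI_\beta(Q)$ from Proposition \ref{P:extremal_proj}, the weight-restriction computation for UCAs in Proposition \ref{P:subalgebra}, and Schofield--Derksen--Weyman's identification of semi-invariants via orthogonal projection, Theorem \ref{T:SIemb}. First I would apply Proposition \ref{P:extremal_proj}: because $\bs{\sigma}(e) = {^\sigma}\ep$ is real and extremal in $\cone$, the vector $^\sigma\ep$ is a projector in $\SI_\beta(Q)$, and the proof of that proposition also yields that $\ep$ is a real Schur root, which disposes of the first conclusion of the theorem.

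Next I would run the following chain of identifications. By definition of projector, $\SI_\beta(Q)_{\ep^\perp} = (\SI_\beta(Q)_e)_{\ep^\perp}$. The full-rank hypothesis on $B(\Delta)$ is precisely what is needed to apply Proposition \ref{P:subalgebra}, giving
\[
  (\uca(\Delta,\b{x};\bs{\sigma})_e)_{\ep^\perp} \;\cong\; \uca(\Delta_e,\b{x}_e;\bs{\sigma}_e).
\]
On the representation side, to invoke Theorem \ref{T:SIemb} one needs $\ep \perp \beta$. Here $\innerprod{\ep,\beta}_Q = 0$ is immediate from ${^\sigma}\ep \in \Sigma_\beta(Q)$, and I would argue that $\hom_Q(E,V)=0$ for $E,V$ general of dimension $\ep,\beta$: a nonzero map would produce a quotient of $E$ that is also a subrepresentation of $V$, and since $E$ is exceptional, an analysis of its (kernel, image) decomposition combined with extremality of $^\sigma\ep$ in $\cone$ rules this out (the image must have a weight lying on $\mb{R}^+{^\sigma}\ep$, forcing $\ep \hookrightarrow \beta$, contradicting $\innerprod{\ep,\ep}_Q=1$). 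Then Theorem \ref{T:SIemb} yields $\SI_\beta(Q)_{\ep^\perp} \cong \SI_{\beta_\ep}(Q_\ep)$, and stitching the two chains together produces the required algebra isomorphism $\SI_{\beta_\ep}(Q_\ep) \cong \uca(\Delta_e,\b{x}_e;\bs{\sigma}_e)$.

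The last and most delicate point, which I expect to be the main obstacle, is the \emph{naturally graded} clause: one must verify $\bs{\sigma}_e = \pi_\ep \circ \bs{\sigma}$ as weight assignments on $\Delta_e$. Both are linear projections: on the UCA side $\bs{\sigma}_e(v) = \bs{\sigma}(v) - \innerprod{{^\sigma}\ep,\bs{\sigma}(v)}\,{^\sigma}\ep$, while on the invariant side $\pi_\ep$ is determined by \eqref{eq:wtproj} via $\pi_\ep(\sigma)(i) = \sigma\cdot\ep_i$, where $\{\ep_i\}$ is the unique quiver exceptional sequence complementary to $\ep$. Using that the bilinear form on weights corresponds under $^\alpha(-)$ to the Euler form on dimension vectors, together with the isometric embedding $\iota_\ep$ of \eqref{eq:dimemb} and its left inverse $\pi_\ep$, I would rewrite both formulas in the common coordinates indexed by $\{\ep_i\}$ and match them term by term.

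In summary, the algebraic backbone is automatic from Propositions \ref{P:extremal_proj} and \ref{P:subalgebra} together with Theorem \ref{T:SIemb}; the genuine bookkeeping is confined to verifying the $\ep\perp\beta$ orthogonality from extremality and to the weight-matching $\bs{\sigma}_e = \pi_\ep\circ\bs{\sigma}$, the latter amounting to a direct computation with the (asymmetric) Euler pairing.
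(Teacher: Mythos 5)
Your proposal is correct and follows essentially the same route as the paper's proof, which likewise combines Proposition \ref{P:extremal_proj} (projector and Schur property from extremality), Proposition \ref{P:subalgebra} (the weight-restricted UCA), and Theorem \ref{T:SIemb} (the semi-invariant side). The extra bookkeeping you flag --- checking $\ep\perp\beta$ and matching $\bs{\sigma}_e$ with $\pi_\ep(\bs{\sigma})$ via $\iota_\ep$ --- is left implicit in the paper (the latter in the remark following the theorem), and your way of handling it is consistent with the paper's conventions.
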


\begin{proof} From the proof of Proposition \ref{P:extremal_proj}, we have seen that $\ep$ must be Schur.
The rest follows from Proposition \ref{P:subalgebra}, \ref{P:extremal_proj}, and Theorem \ref{T:SIemb}.
\end{proof}

\begin{remark} In what follows, we will always identify $\bs{\sigma}_e$ with $\pi_\ep(\bs{\sigma})$ through the linear isometry $\iota_\ep$ if available.
So we may keep $\uca(\Delta_e,\b{x}_e; \bs{\sigma}_e)$ instead of writing $\uca(\Delta_e,\b{x}_e; \pi_\ep(\bs{\sigma}))$.
\end{remark}

\begin{remark} We note that if $\vep_1,\vep_2,\dots,\vep_r$ is an exceptional sequence with each $^\sigma \vep_i$ extremal in $\cone$,
then each $^\sigma(\vep_2)_{\vep_1},\dots,{^\sigma}(\vep_{n})_{\vep_1}$ is extremal in $\mb{R}^+\Sigma_{\beta_{\vep_1}}(Q_{\vep_1})$.
So Theorem \ref{T:invGUCA} can be inductively applied to such an exceptional sequence.
\end{remark}

\subsection{Complete Flag Varieties, Unipotent Subgroups, and Sextuple Flags}

We first recall the main result in \cite{Fs1}.
Consider the triple flag quiver $T_n$
$$\tripleflag$$
with the {\em standard} dimension vector $\beta_n$ as indicated above.
Let $\e_i^a$ be the unit vector supported on the $i$-th vertex of the $a$-th arm of $T_n$.
Let $\f_{i,j}$ be the weight given by $\e_n - \e_i^1 - \e_j^2 - \e_k^3$ for $k=n-i-j$,
and $\vep_{i,j}={^\alpha\f_{i,j}}$. We use the convention that $\e_0^a$ is the zero vector and $\e_n^a=\e_n$ for $a=1,2,3$.
It is clear that each $\f_{i,j}$ is extremal in $\mb{R}^+\Sigma_{\beta_n}(T_n)$.

It is proved in \cite{Fs1} that the semi-invariant ring $\SI_{\beta_n}(T_n)$ is the GUCA
$\uca(\Delta_n;\bs{\sigma}_n)$, where $\Delta_n$ is the ice hive quiver and $\bs{\sigma}_n(i,j)=\f_{i,j}$.
The {\em ice hive quiver $\Delta_n$ of size $n$} is an ice quiver in the plane with $\sm{n+2\\ 2}-3$ vertices arranged in a triangular grid.
We label the vertices as shown in Figure~\ref{fig:hive}.
Note that three vertices $(0,0),(n,0)$ and $(0,n)$ are missing, and all boundary vertices are frozen.
There are three types of arrows: $(i,j)\to (i+1,j), (i,j)\to (i,j-1)$, and $(i,j) \to (i-1,j+1)$.
It is easy to see that $B(\Delta_n)$ has full rank.
\begin{figure}[h]
\begin{center}
$\hivesix$
\end{center}
\caption{The ice hive quiver $\Delta_6$.}
\label{fig:hive}
\end{figure}

\begin{example}[Complete flag varieties] \label{ex:cflag}
Consider the exceptional sequence $\mb{F}:=\{\vep_{i,n-i} \}_{1\leq i\leq n-1}$ of $T_n$.
Note that $\vep_{i,n-i} \perp \vep_{j,n-j}$ for $i\neq j$ so the order of $\mb{F}$ does not matter.
We find that the unique quiver exceptional sequence $\mb{E}$ completing $\mb{F}$ is given by
$$\{ \ep_n, \dots, \ep_2, \ep_{1},\e_{n-1}^3,\dots, \e_2^3, \e_{1}^3 \},$$
where $\ep_{i} = {^\alpha} (\e_{n}-\e_i^1-\e_{n-i+1}^2)$.
The quiver of $\mb{E}$ is displayed below, where $\e_1^3, \dots, \e_{n-1}^3$ and $\ep_1,\dots, \ep_{n}$ corresponds to the vertices $1,\dots,n-1$, and $1',\dots,n'$ respectively.
$$\flagstar$$

We denote the quiver of $\mb{E}$ by $SF_n$. Let $\beta_n'$ be the dimension vector given by $\beta_n'(i)=i$ and $\beta_n'(i')=1$.
We can verify by \eqref{eq:dimemb} that under the embedding the dimension vector $\beta_n'$ goes to $\beta_n$.
It is not hard to see that $\SI_{\beta_n'}(SF_n)$ is the {\em total coordinate ring} $\mc{R}(\Fl_n)$ of the complete flag variety $\Fl_n$ of the $n$-dimensional vector space. Let $\op{Cl}(X)$ be the divisor class group of a variety $X$. By definition,
$$\mc{R}(X):=\bigoplus_{[D]\in\op{Cl}(X)}\Gamma(X,O_X(D)),$$
where the multiplication is given by multiplying homogeneous sections in the field of rational functions $k(X)$.
However, to rigorously prove this fact, we need {\em variation of GIT}. We will write this up in some lecture notes.

We observe that the exceptional sequence $\mb{F}$ corresponds to the last row of frozen vertices of $\Delta_n$.
Let $\Delta_n^\flat$ be the ice quiver obtained from $\Delta_n$ by deleting the last row of frozen vertices (see Figure \ref{fig:flats}).
\begin{figure}[h]
        \centering
        \begin{subfigure}[h]{0.3\textwidth}
            $\hivesixf$  \label{fig:hivef}
        \end{subfigure}                 {\quad \hfill}
        \begin{subfigure}[h]{0.3\textwidth}
            $\hivesixff$  \label{fig:hiveff}
        \end{subfigure}                {\qquad\qquad \hfill} \\
        \caption{The ice quiver $\Delta_6^\flat$ (left) and $\Delta_6^{\flat\flat}$ (right)}\label{fig:flats}
\end{figure}
It follows from Theorem \ref{T:invGUCA} that
\begin{proposition}The total coordinate ring of $\Fl_n$ is the GUCA $\uca(\Delta_n^\flat;\bs{\sigma}_n^\flat)$.
\end{proposition}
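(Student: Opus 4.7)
The plan is to apply Theorem \ref{T:invGUCA} iteratively along the exceptional sequence $\mb{F} = \{\vep_{i,n-i}\}_{1 \leq i \leq n-1}$. Since these roots are pairwise orthogonal, the order of projection is immaterial, and the combinatorial target $\Delta_n^\flat$ is obtained by removing, one at a time, the frozen vertices in the bottom row of $\Delta_n$.

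First, I would verify the combinatorial correspondence. The weight $\f_{i,n-i}$ is by definition $\bs{\sigma}_n(i,n-i)$, so the frozen vertex $(i,n-i)$ in the bottom row of $\Delta_n$ indexes the cluster variable of weight $\f_{i,n-i}$. The extremality of each $\f_{i,n-i}$ in the grading cone $\mb{R}^+\Sigma_{\beta_n}(T_n)$ has already been noted in the text. In particular, taking $e=(1,n-1)$ and $\ep = \vep_{1,n-1}$, the hypotheses of Theorem \ref{T:invGUCA} are satisfied once we invoke the main result of \cite{Fs1} that $\SI_{\beta_n}(T_n) \cong \uca(\Delta_n;\bs{\sigma}_n)$ is a naturally graded UCA with $B(\Delta_n)$ of full rank.

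Second, Theorem \ref{T:invGUCA} then yields
\[
\SI_{(\beta_n)_{\vep_{1,n-1}}}\!\bigl((T_n)_{\vep_{1,n-1}}\bigr) \;\cong\; \uca\bigl((\Delta_n)_{(1,n-1)};\,(\bs{\sigma}_n)_{(1,n-1)}\bigr).
\]
The remark following Theorem \ref{T:invGUCA} guarantees that extremality is preserved along the projection, so the inductive hypothesis propagates: each subsequent $\f_{i,n-i}$ remains extremal in the grading cone of the projected semi-invariant ring, and each projection simply deletes one more vertex from the bottom row of the ice hive quiver. After $n-1$ such projections the bottom row has been entirely removed, giving $\Delta_n^\flat$ together with the restricted weight configuration $\bs{\sigma}_n^\flat$, while the quiver $T_n$ is simultaneously projected to $Q_{\mb{F}}$ with transported dimension vector.

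Third, one must identify $Q_{\mb{F}}$ with $SF_n$ and verify that the transported dimension vector equals $\beta_n'$. Both facts were spelled out in the text immediately before the proposition: the unique quiver exceptional sequence completing $\mb{F}$ is $\mb{E}$ as displayed, and a direct check via \eqref{eq:dimemb} shows $\iota_{\mb{F}}(\beta_n') = \beta_n$. Combining these with the iterated isomorphism gives
\[
\SI_{\beta_n'}(SF_n) \;\cong\; \uca\bigl(\Delta_n^\flat;\,\bs{\sigma}_n^\flat\bigr).
\]

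The main obstacle lies not in the cluster-theoretic machinery, which runs smoothly once the extremality of the $\f_{i,n-i}$ is granted, but in the identification $\SI_{\beta_n'}(SF_n) \cong \mc{R}(\Fl_n)$. As the author indicates, a rigorous proof of this identification requires variation of GIT and is deferred; assuming it, the proposition follows by concatenating that identification with the iterated GUCA isomorphism above.
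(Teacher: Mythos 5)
Your proposal is correct and follows essentially the same route as the paper: iterate Theorem \ref{T:invGUCA} along the pairwise orthogonal exceptional sequence $\mb{F}=\{\vep_{i,n-i}\}$ (corresponding to the bottom row of frozen vertices of $\Delta_n$), using the remark after that theorem to propagate extremality, and then invoke the identifications $Q_{\mb{F}}\cong SF_n$, $\iota_{\mb{F}}(\beta_n')=\beta_n$, and $\SI_{\beta_n'}(SF_n)\cong\mc{R}(\Fl_n)$ exactly as set up in the surrounding text. You also correctly isolate the one genuinely deferred ingredient, namely the GIT identification of $\SI_{\beta_n'}(SF_n)$ with the total coordinate ring, which the paper likewise leaves to forthcoming lecture notes.
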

\noindent Here, the new weight configuration $\bs{\sigma}_n^\flat$ can be computed from \eqref{eq:wtproj}.
For example, $\bs{\sigma}_n^\flat(i,0)= \sum_{k=i+1}^n\e_k' - \e_{n-i}$, which is clearly extremal in $\mb{R}^+\Sigma_{\beta_n'}(SF_n)$.
\end{example}

\begin{example}[The unipotent subgroups] \label{ex:ucell}
We sketch how to get the cluster structure of the maximal unipotent subgroup $\op{N}_n$ of $\GL_n$ from the previous example, and leave the details to the readers.
Although as an ungraded algebra it is just a polynomial ring, it is certainly non-trivial if the multigrading is taken into account.

We verify that each $\bs{\sigma}_n^\flat(i,0)$ corresponds to a real Schur root, denoted by $\vep_i$.
In fact, $\mb{F}:=\{\vep_{n-1},\dots,\vep_{2},\vep_{1} \}$ is an exceptional sequence of $SF_n$.
We find that the unique quiver exceptional sequence $\mb{E}$ completing $\mb{F}$ is given by
$\{ \ep_n', \dots, \ep_2',\ep_{1}' \},$
where $\ep_{i}' = \e_{n-1+i}' + \sum_{i}^{n-1}\e_i$.
Then we can verify that the arrow matrix of $Q(\mb{E})$ is a strict upper-triangular matrix with all entries equal to one.

Let $\b{1}$ be the dimension vector $(1,1,\dots,1)$.
We can verify by \eqref{eq:dimemb} that under the embedding the dimension vector $\b{1}$ goes to $\beta_n'$.
Clearly $\SI_{\b{1}}(Q(\mb{E}))$ is the coordinate ring of $\op{N}_n$.
Let $\Delta_n^{\flat\flat}$ be the ice quiver obtained from $\Delta_n^\flat$ by deleting the right (or left) row of frozen vertices.
It follows that
\begin{proposition} The coordinate ring of $\op{N}_n$ is the GUCA $\uca(\Delta_n^{\flat\flat};\bs{\sigma}_n^{\flat\flat})$.
\end{proposition}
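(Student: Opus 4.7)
The plan is to apply Theorem \ref{T:invGUCA} iteratively, starting from the GUCA realization $\mc{R}(\Fl_n) \cong \uca(\Delta_n^\flat;\bs{\sigma}_n^\flat)$ just established, and projecting through the exceptional sequence $\mb{F} = \{\vep_{n-1},\ldots,\vep_2,\vep_1\}$ of $SF_n$. The key observation is that the weights $\bs{\sigma}_n^\flat(i,0)$, $1\leq i\leq n-1$, sit at the right (or left) column of frozen vertices of $\Delta_n^\flat$, and these are exactly the vertices we want to peel off to produce $\Delta_n^{\flat\flat}$.

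First I would verify the hypotheses. For each $i$, I identify $\bs{\sigma}_n^\flat(i,0) = \sum_{k=i+1}^n \e_k' - \e_{n-i}$ with the dimension vector $\vep_i = {}^\alpha\bs{\sigma}_n^\flat(i,0)$ of $SF_n$ via the Euler form, and check that $\vep_i$ is a real Schur root and that $\mb{F}$ is a quiver exceptional sequence in the specified order. Extremality of $\bs{\sigma}_n^\flat(i,0)$ in $\mb{R}^+\Sigma_{\beta_n'}(SF_n)$ should follow from Theorem \ref{T:cone} by exhibiting a supporting hyperplane, or equivalently from the fact that these are coefficient weights of the known GUCA structure lying on extreme rays of the grading cone.

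Next, I invoke the Remark following Theorem \ref{T:invGUCA}: extremality of the images is preserved under iterated projection, so the theorem applies at every stage of the sequence, and we may delete the vertices $(n-1,0),(n-2,0),\ldots,(1,0)$ in succession. By construction the resulting ice quiver is $\Delta_n^{\flat\flat}$, and the transported weight configuration is $\bs{\sigma}_n^{\flat\flat}$ by formula \eqref{eq:wtproj}. On the invariant-theoretic side, iterating Theorem \ref{T:SIemb} gives $\SI_{\b{1}}(Q(\mb{E})) \cong \SI_{\beta_n'}(SF_n)_{\mb{F}^\perp}$, where $\mb{E}$ is the quiver exceptional sequence completing $\mb{F}$ described in the example. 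A check via \eqref{eq:dimemb} confirms that $\iota_{\mb{F}}(\b{1}) = \beta_n'$. Finally, since all entries of $\b{1}$ are $1$, the group $\SL_{\b{1}}$ is trivial, so $\SI_{\b{1}}(Q(\mb{E}))$ is the full coordinate ring of $\Rep_{\b{1}}(Q(\mb{E}))$; this is an affine space of dimension $\binom{n}{2}$, and the multigrading by characters of $\GL_{\b{1}}$ matches the standard multigrading of $k[\op{N}_n]$ by the maximal torus of $\GL_n$.

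The main obstacle is the explicit combinatorial verification that $\mb{E} = \{\ep_n',\ldots,\ep_1'\}$ with $\ep_i' = \e_{n-1+i}' + \sum_{k=i}^{n-1}\e_k$ really is the unique quiver exceptional sequence of $SF_n$ completing $\mb{F}$, and that the resulting $Q(\mb{E})$ has the claimed strictly upper-triangular arrow matrix with all entries equal to one. This amounts to evaluating $\innerprod{\ep_j',\ep_i'}_{SF_n}$ for $i<j$ using the Euler form of $SF_n$, and showing each equals $-1$ while $\innerprod{\ep_i',\ep_i'}_{SF_n}=1$ and $\innerprod{\vep_i,\ep_j'}_{SF_n}=0$ for all $i,j$. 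Schofield's construction \cite{S1} of the right orthogonal complement guides the computation, but the bookkeeping across the ``primed'' and ``unprimed'' vertices of $SF_n$ is what makes this step technical.
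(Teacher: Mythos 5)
Your proposal follows essentially the same route as the paper, which itself only sketches this argument: identify the weights $\bs{\sigma}_n^\flat(i,0)$ with the real Schur roots forming the exceptional sequence $\mb{F}=\{\vep_{n-1},\dots,\vep_1\}$ of $SF_n$, iterate Theorem \ref{T:invGUCA} (via the remark on exceptional sequences) to delete the corresponding column of frozen vertices, and identify $\SI_{\b{1}}(Q(\mb{E}))$ with $k[\op{N}_n]$ via the completing sequence $\mb{E}$ and \eqref{eq:dimemb}. The combinatorial verification you flag as the main obstacle is precisely the part the paper also leaves to the reader.
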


\noindent We want to tell curious readers that we cannot project from any vertex of $\Delta_n^{\flat\flat}$ except for $(n,1)$ because none of the weights in $\bs{\sigma}_n^{\flat\flat}$ corresponds to a real Schur root.
\end{example}

\begin{example}[Complete sextuple flags] \label{ex:6flag}
Consider the exceptional sequence $\mb{F}:=\{\vep_{0,n}, \vep_{n,0}, \vep_{n,n} \}$ of $T_{2n}$.
We find that the unique quiver exceptional sequence $\mb{E}$ completing $\mb{F}$ is given by $\{ \ep, \mb{F}_a', \mb{F}_a \}_{a=1,2,3}$ (the order of $a$ does not matter),
where $\mb{F}_a:=\e_{n-1}^a,\dots,\e_2^a,\e_1^a, \mb{F}_a':=\e_{2n-1}^a,\dots,\e_{n+2}^a,\e_{n+1}^a$ and $\ep = {^\alpha} (\e_{2n}-\e_n^1-\e_{n}^2-\e_n^3)$.
The quiver of $\mb{E}$ is the sextuple flag quiver, where $\e_i^a$ ($1\leq i\leq n-1$) and $\e_{n+i}^a$ ($1\leq i\leq n-1$) correspond to the vertices on the left and right three flags respectively, and $\ep$ corresponds to the central vertex $n$.
$$\sextupleflag$$

We call such a quiver sextuple flag quiver, denoted by $S_n^6$. Let $\beta_n^6$ be the dimension vector indicated as above.
We can verify by \eqref{eq:dimemb} that under the embedding the dimension vector $\beta_n^6$ goes to $\beta_{2n}$ of $T_{2n}$.
Let $\Delta_n^\triangledown$ be the ice quiver obtained from $\Delta_{2n}$ by deleting the vertices corresponding to $\mb{F}$.
\begin{figure}[h]
\begin{center}
$\hivesixp$
\end{center}
\caption{The ice quiver $\Delta_3^\triangledown$.}
\label{fig:hivep}
\end{figure}
It follows from Theorem \ref{T:invGUCA} that
\begin{proposition} The semi-invariant ring $\SI_{\beta_n^6}(S_n^6)$ is the GUCA $\uca(\Delta_n^\triangledown;\bs{\sigma}_n^\triangledown)$.
\end{proposition}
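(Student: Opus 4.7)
The plan is to derive the proposition from Theorem \ref{T:invGUCA} by projecting through each of the three weights of $\mb{F}$ in turn, relying on the inductive recipe spelled out in the remark that follows Theorem \ref{T:invGUCA}.

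First, by the main result of \cite{Fs1} recalled just above the example, $\SI_{\beta_{2n}}(T_{2n})$ is the naturally graded UCA $\uca(\Delta_{2n};\bs{\sigma}_{2n})$ with $B(\Delta_{2n})$ of full rank, where $\bs{\sigma}_{2n}(i,j)=\f_{i,j}$. The three frozen vertices $(0,n),(n,0),(n,n)$ of $\Delta_{2n}$ map under $\bs{\sigma}_{2n}$ to $\f_{0,n},\f_{n,0},\f_{n,n}$, which are the weights of the exceptional sequence $\mb{F}$; in particular they are real, and each one is extremal in $\mb{R}^+\Sigma_{\beta_{2n}}(T_{2n})$ as already observed in the discussion of the ice hive quiver.

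Second, I would apply Theorem \ref{T:invGUCA} with $e=(0,n)$. Its hypotheses hold, and the output is
$$\SI_{(\beta_{2n})_{\vep_{0,n}}}\bigl((T_{2n})_{\vep_{0,n}}\bigr) \;\cong\; \uca\bigl((\Delta_{2n})_{(0,n)};\,(\bs{\sigma}_{2n})_{(0,n)}\bigr),$$
with the $B$-matrix of the new quiver still of full rank by Lemma \ref{L:projB}. The remark after Theorem \ref{T:invGUCA} guarantees that the projected weights $\pi_{\vep_{0,n}}(\f_{n,0})$ and $\pi_{\vep_{0,n}}(\f_{n,n})$ remain extremal in the grading cone of the smaller semi-invariant ring, so Theorem \ref{T:invGUCA} applies again to delete the vertex $(n,0)$, and once more to delete $(n,n)$.

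Third, after the three projections the right hand side is $\uca(\Delta_n^\triangledown;\bs{\sigma}_n^\triangledown)$ by the definition of $\Delta_n^\triangledown$ as the quiver obtained from $\Delta_{2n}$ by deleting the three vertices of $\mb{F}$, with $\bs{\sigma}_n^\triangledown = \pi_{\mb{F}}(\bs{\sigma}_{2n})$ computed via \eqref{eq:wtproj}. On the left hand side, the identifications $(T_{2n})_{\mb{F}}=Q(\mb{E})=S_n^6$ and $(\beta_{2n})_{\mb{F}}=\beta_n^6$ (the latter from the verification of \eqref{eq:dimemb}) yield $\SI_{\beta_n^6}(S_n^6)$, as desired.

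The main obstacle, in my view, is the extremality check at the intermediate stages: one has to be sure that each projected weight really is extremal in the cone of the smaller semi-invariant ring, not merely nonzero. This is precisely what the remark after Theorem \ref{T:invGUCA} asserts for exceptional sequences, but it would be worth confirming directly from the combinatorics of $\Sigma_{\beta_{2n}}(T_{2n})$ that the three weights of $\mb{F}$ span a face of $\cone$. Everything else, namely full-rank preservation of $B$ and the bookkeeping with $\iota_{\mb{F}}$ and $\pi_{\mb{F}}$, is routine given the tools already established.
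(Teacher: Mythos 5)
Your proposal is correct and follows exactly the paper's route: the paper also deduces this proposition by iterating Theorem \ref{T:invGUCA} along the exceptional sequence $\mb{F}=\{\vep_{0,n},\vep_{n,0},\vep_{n,n}\}$, with the extremality of the intermediate projected weights supplied by the remark following that theorem. The only additional input the paper records is the explicit verification that the completing quiver exceptional sequence $\mb{E}$ has quiver $S_n^6$ and that $\beta_n^6$ maps to $\beta_{2n}$ under \eqref{eq:dimemb}, which you also note.
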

\noindent Here, $\bs{\sigma}_n^\triangledown$ can also be easily obtained from \eqref{eq:wtproj}.
Since it will appear in Proposition \ref{P:multipleLR}, we list it for readers' convenience.
Let $i = i_qn + i_r$ such that $0\leq i_r < n$, and similar for $j$ and $k=n-i-j$, then
\begin{align*} \bs{\sigma}_n^\triangledown(i,j) = (2-i_q-j_q-k_q)\e_n - \e_{i_r}^{1+3i_q} - \e_{j_r}^{2+3j_q} - \e_{k_r}^{3+3k_q}.
\end{align*}
\end{example}

\subsection{Vertex Removal}
Let $Q$ be a quiver, and $\beta$ a dimension vector of $Q$.
For a fixed vertex $r\in Q$, let $\br{Q}$ be the quiver such that $\br{Q}_0=Q_0\setminus \{r\}$,
and
$$\br{Q}_1=Q_1\setminus \{a\mid h(a)=r \text{ or } t(a)=r\} \cup \{[ab]\mid h(a)=t(b)=r\},$$
where $t([ab])=t(a)$ and $h([ab])=h(b)$.
The {\em vertex removal} at $r$ on $\Rep_\beta(Q)$ is the algebraic morphism $\Rep_\beta(Q) \to \Rep_{\br{\beta}}(\br{Q})$ sending $M$ to $\br{M}$ defined by
\begin{align*}
\br{M}(a) & = M(a) & \text{if  } & t(a),h(a)\neq r, \\
\br{M}([ab]) & = M(ab) & \text{if  } & t(a)\xrightarrow{a} r \xrightarrow{b} h(b).
\end{align*}

\begin{lemma} \label{L:projSI} Assume the following condition on $\beta$
\begin{equation} \label{eq:beta_r} \min\big( \sum_{h(a)=r} \beta(t(a)),\sum_{t(b)=r} \beta(h(b)) \big) \leq \beta(r).
\end{equation}
The vertex removal $\Rep_\beta(Q)\to \Rep_{\br{\beta}}(\br{Q})$
induces an embedding $\iota:\SI_{\br{\beta}}(\br{Q}) \hookrightarrow \SI_{\beta}(Q)$
mapping $\SI_{\br{\beta}}(\br{Q})$ onto $\bigoplus_{\sigma_0} \SI_\beta (Q)_{\sigma_0}$,
where $\sigma_0$ runs through all weights not supported on $r$.
\end{lemma}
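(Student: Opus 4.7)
The plan is to realise $\iota$ as pullback along the vertex-removal morphism $\pi:\Rep_\beta(Q)\to\Rep_{\br{\beta}}(\br{Q})$ and then invoke the First Fundamental Theorem of Invariant Theory (FFT) for $\GL_{\beta(r)}$ acting on the arrows incident to $r$.

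First I would verify that $\pi$ is equivariant along the quotient homomorphism $\GL_\beta\twoheadrightarrow\GL_{\br{\beta}}$ that forgets the $r$-factor: for arrows $s\xrightarrow{a}r\xrightarrow{b}t$, the $g(r)$-factors of the base change cancel when we form $M(b)M(a)$. It then follows formally that the pullback $\pi^*$ carries $\SI_{\br{\beta}}(\br{Q})_{\sigma_0}$ into $\SI_\beta(Q)_{\sigma_0}$, where $\sigma_0$ is regarded as a weight of $\mb{Z}^{Q_0}$ by extension by $0$ at $r$. This produces $\iota$, and its image automatically lies in $\bigoplus_{\sigma_0}\SI_\beta(Q)_{\sigma_0}$.

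Next I would establish injectivity of $\iota$ via dominance (actually surjectivity) of $\pi$. Setting $V_{\mathrm{in}}=\bigoplus_{h(a)=r}V_{t(a)}$ and $V_{\mathrm{out}}=\bigoplus_{t(b)=r}V_{h(b)}$, the only nontrivial part of $\pi$ is the composition map $\mu:\Hom(V_{\mathrm{in}},V_r)\oplus\Hom(V_r,V_{\mathrm{out}})\to\Hom(V_{\mathrm{in}},V_{\mathrm{out}})$, $(\alpha,\beta)\mapsto\beta\alpha$, whose image is the locus of maps of rank $\leq\beta(r)$. The hypothesis \eqref{eq:beta_r} is precisely what forces every map in the target to satisfy this rank bound, so $\mu$ (hence $\pi$) is surjective and $\pi^*$ is injective.

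For surjectivity of $\iota$ onto $\bigoplus_{\sigma_0}\SI_\beta(Q)_{\sigma_0}$ I would pick $f\in\SI_\beta(Q)_{\sigma_0}$ with $\sigma_0(r)=0$, and first promote $f$ from an $\SL(V_r)$-invariant to a $\GL(V_r)$-invariant using that the center of $\GL(V_r)$ acts on $f$ by $\det(g(r))^{\sigma_0(r)}=1$. Then the FFT for $\GL_{\beta(r)}$, applied in its surjective regime guaranteed by \eqref{eq:beta_r}, identifies the $\GL(V_r)$-invariants on the arrows incident to $r$ with $k[\Hom(V_{\mathrm{in}},V_{\mathrm{out}})]$ via $\mu^*$; tensoring with the coordinate ring of the remaining arrows (on which $\GL(V_r)$ acts trivially) yields an isomorphism $\pi^*:k[\Rep_{\br{\beta}}(\br{Q})]\xrightarrow{\sim}k[\Rep_\beta(Q)]^{\GL(V_r)}$. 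Restricting this isomorphism to the $\SL_{\br{\beta}}$-semi-invariant component of weight $\sigma_0$ exhibits $f$ as a pullback $\pi^*g$ with $g\in\SI_{\br{\beta}}(\br{Q})_{\sigma_0}$, completing the proof.

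The step I expect to be the main obstacle is the FFT invocation in the last paragraph: one has to pin down why \eqref{eq:beta_r} is exactly the condition that turns the determinantal identification into a polynomial isomorphism. Without it, $\mu$ would only surject onto a proper determinantal subvariety of $\Hom(V_{\mathrm{in}},V_{\mathrm{out}})$, so $k[\Rep_\beta(Q)]^{\GL(V_r)}$ would be a proper quotient of $k[\Rep_{\br{\beta}}(\br{Q})]$ by the ideal of $(\beta(r)+1)$-minors of the composite arrows, and $\iota$ would fail to be surjective onto the claimed graded piece.
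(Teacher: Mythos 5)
Your proposal is correct and follows essentially the same route as the paper: both arguments reduce to the first and second fundamental theorems of invariant theory for $\GL(V_r)$ acting on $\Hom(V_{\mathrm{in}},V_r)\times\Hom(V_r,V_{\mathrm{out}})$, together with the observation that \eqref{eq:beta_r} forces every map in $\Hom(V_{\mathrm{in}},V_{\mathrm{out}})$ to have rank at most $\beta(r)$, so the determinantal relations are vacuous and $k[\Rep_\beta(Q)]^{\GL(V_r)}\cong k[\Rep_{\br{\beta}}(\br{Q})]$. Your extra bookkeeping (equivariance of $\pi$, promotion from $\SL(V_r)$- to $\GL(V_r)$-invariance for weights with $\sigma_0(r)=0$) just spells out what the paper compresses into the identity $\bigoplus_{\sigma_0}\SI_\beta(Q)_{\sigma_0}=k[\Rep_\beta(Q)]^{\SL_{\br{\beta}}\times\GL(V_r)}$.
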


\begin{proof} It suffices to show that $\SI_{\br{\beta}}(\br{Q})\cong \bigoplus_{\sigma_0} \SI_\beta (Q)_{\sigma_0}$.
Let $M_{\op{in}}:=\bigoplus_{h(a)=r} M(t(a))$ and $M_{\op{out}}:=\bigoplus_{t(b)=r} M(h(b))$.
By the first and second fundamental theorem of invariant theory of $\GL(V)$, we can identify
$$\Hom(M_{\op{in}},M(r))\times \Hom(M(r),M_{\op{out}}) /\!/ \GL(M(r))$$
with the rank at most $\beta(r)$ maps in $\Hom(M_{\op{in}}, M_{\op{out}})$.
But due to our assumption, all maps in $\Hom(M_{\op{in}}, M_{\op{out}})$ have rank at most $\beta(r)$.
By definition, $\bigoplus_{\sigma_0} \SI_\beta(Q)_{\sigma_0}=k[\Rep_\beta(Q)]^{\SL_{\br{\beta}}\times \GL(M(r))}$,
which is equal to $k[\Rep_{\br{\beta}}(\br{Q})]^{\SL_{\br{\beta}}}$.
\end{proof}

\noindent In other words, the $\sigma_0$ in the above lemma is in $\theta^\perp$, where $\theta$ is the weight corresponding to the unit vector $\e_r$.
It follows from Proposition \ref{P:outerproj} and Lemma \ref{L:projSI} that

\begin{theorem} \label{T:removal} Suppose that $\SI_\beta(Q)$ is a naturally graded UCA $\uca(\Delta;\bs{\sigma})$, and $B(\Delta)$ has full rank.
Let $r$ be a vertex of $Q$ satisfying \eqref{eq:beta_r}
and that $\bs{\sigma}(v)(r) = 0$ for all $v\in \Delta_0$ except for a set of frozen vertices $\b{v}$, where all $\bs{\sigma}(v)(r)<0$ (or all $>0$).
Then we have that $\SI_{\br{\beta}}(\br{Q})$ is the naturally graded UCA $\uca(\Deltauv;\bs{\sigma}(\hat{\b{v}}))$, where $\b{u}$ is attached to $\b{v}$.
\end{theorem}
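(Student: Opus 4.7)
The proof I would propose is essentially a direct combination of the two main ingredients already established, and the paper even telegraphs this (``It follows from Proposition \ref{P:outerproj} and Lemma \ref{L:projSI} that\ldots''). The plan is as follows.

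First, I would choose the weight $\theta$ to be the one corresponding to the coordinate unit vector $\e_r \in \mb{Z}^{Q_0}$, so that $\innerprod{\theta,\sigma_0} = 0$ precisely when $\sigma_0$ is not supported on $r$, i.e., when $\sigma_0(r) = 0$. (This identification was already noted in the remark following Lemma \ref{L:projSI}.) Under the natural grading, the $\sigma$-weight of $\b{x}(v)$ equals $\bs{\sigma}(v)$, and consequently $\innerprod{\theta,\bs{\sigma}(v)}$ has the same sign as $\bs{\sigma}(v)(r)$ for each $v \in \Delta_0$.

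Second, I would invoke Lemma \ref{L:projSI}, whose hypothesis \eqref{eq:beta_r} is exactly what is assumed in the theorem. This gives the isomorphism of graded algebras
\[
\SI_{\br{\beta}}(\br{Q}) \;\cong\; \bigoplus_{\sigma_0(r)=0}\SI_{\beta}(Q)_{\sigma_0} \;=\; \SI_{\beta}(Q)_{\theta^\perp},
\]
where the right-hand side is the graded subalgebra of $\SI_\beta(Q) \cong \uca(\Delta;\bs{\sigma})$ cut out by $\theta^\perp$.

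Third, I would verify that the hypotheses of Proposition \ref{P:outerproj} hold for this $\theta$. The assumption of the theorem says that $\bs{\sigma}(v)(r) = 0$ for every $v \in \Delta_0$ except for a set $\b{v}$ of frozen vertices, where all values $\bs{\sigma}(v)(r)$ have the same strict sign. Translating through the first step, this is precisely the statement that $\innerprod{\theta,\bs{\sigma}(v)} = 0$ for all $v$ outside the frozen set $\b{v}$, and strictly of one sign on $\b{v}$. Combined with the full rank assumption on $B(\Delta)$, Proposition \ref{P:outerproj} then yields
\[
\uca(\Delta,\b{x};\bs{\sigma})_{\theta^\perp} \;=\; \uca\bigl(\Deltauv,\b{x}(\hat{\b{v}});\bs{\sigma}(\hat{\b{v}})\bigr),
\]
with $\b{u}$ attached to $\b{v}$. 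Chaining the two isomorphisms concludes the proof, and the identification is natural in the graded sense because both intermediate isomorphisms preserve the $\sigma$-weight.

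The only genuine work is the sign bookkeeping between the pairing $\innerprod{\theta,-}$ and coordinate evaluation at $r$, which is essentially a choice of conventions; nothing in the argument requires a new idea beyond what is packaged in Lemma \ref{L:projSI} and Proposition \ref{P:outerproj}. Hence I do not anticipate a main obstacle here --- the theorem is a clean corollary of the already-proven reduction machinery, specialized to the weight dual to the vertex being removed.
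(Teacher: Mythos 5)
Your proposal is correct and is exactly the argument the paper intends: the theorem is stated as an immediate consequence of Lemma \ref{L:projSI} (identifying $\SI_{\br{\beta}}(\br{Q})$ with the graded subalgebra of weights vanishing at $r$) combined with Proposition \ref{P:outerproj} applied to the weight $\theta$ dual to $\e_r$. The sign bookkeeping you flag is handled by the unimodularity of the form, as noted in the remark following Lemma \ref{L:projSI}, so there is no gap.
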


Finally, we remark that Proposition \ref{P:outerproj} can also be used to remove arrows.
The outer automorphism group $\Aut_1(Q)$ of the quiver $Q$ is by definition $\prod_{i,j\in Q_0} \GL(A_{i,j})$,
where $A_{i,j}$ is the linear space spanned by arrows from $i$ to $j$.
In particular, for each arrow $a$ of $Q$, there is an associated torus $k_a^*$ in $\Aut_1(Q)$.
Since the action of $\Aut_1(Q)$ commutes with the action of $\GL_\beta$,
the torus $T_1:=\prod_{a\in Q_1} k_a^*$ induces another grading of the semi-invariant ring.
If the cluster in a seed is homogeneous with respect to the grading induced by $k_a^*$,
then we can use Proposition \ref{P:outerproj} to remove the arrow $a$.

\subsection{Partial Flag Varieties, Quintuple and Quadruple Flags, and a Triple Flag} \label{ss:ex_vr}

\begin{exercise}[Partial flag varieties] We are able to find the cluster structure of total coordinate ring of any partial flag varieties in type $A$. Since the divisor class group of a Grassmannian is $\mb{Z}$, its total coordinate ring is just the usual coordinate ring.
Hence we should be able to recover the result in \cite{Sc} as well.
We can start from the result in Example \ref{ex:cflag}, then remove some vertices on the long arm of $SF_n$ depending on the particular flag variety. The following hint may be useful.

Let $\b{r}_{l,m}= \{n-i\mid i=l,l+1,\dots,m\}$ for $n>m\geq l$.
We define a sequence of vertices $\b{u}_{l}:=\{(l-1,1),(l-2,2),\dots,(1,l-1)\}$.
This contains all mutable vertices of $l$-th row of $\Delta_n^\flat$.
If we want to remove all vertices in $\b{r}_{l,m}$, we first apply a sequence mutation in the following order
\begin{equation} \label{eq:mu_ml}
\b{u}_{l};\ \b{u}_{l+1},\b{u}_{l};\ \b{u}_{l+2},\b{u}_{l+1},\b{u}_{l};\ \dots\dots;\ \b{u}_{m},\b{u}_{m-1},\dots,\b{u}_{l}.
\end{equation}
Then we are able to apply Theorem \ref{T:removal}.
\end{exercise}

\begin{exercise}[Complete quintuple and quadruple flags] To get the cluster structure of complete quintuple flags, we start with Example \ref{ex:6flag}, then we remove all vertices in a flag in the order $n-1,n-2,\dots,1$.
Remove vertices in another flag, and we get a complete quadruple flag.
We shall find that

\begin{proposition} The semi-invariant ring $\SI_{\beta_n^5}(S_n^5)$ is the GUCA $\uca(\merge_n;\bs{\sigma}_n')$;
The semi-invariant ring $\SI_{\beta_n^4}(S_n^4)$ is the GUCA $\uca(\square_n;\bs{\sigma}_n'')$.
\end{proposition}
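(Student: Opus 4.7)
The plan is to apply Theorem \ref{T:removal} iteratively to the sextuple-flag GUCA of Example \ref{ex:6flag}. Starting from the identification $\SI_{\beta_n^6}(S_n^6) \cong \uca(\Delta_n^\triangledown;\bs{\sigma}_n^\triangledown)$, I would remove the vertices of one chosen flag of $S_n^6$, say flag $a = 6$, in the order $\e_{n-1}^a, \e_{n-2}^a, \ldots, \e_1^a$, producing $\SI_{\beta_n^5}(S_n^5) \cong \uca(\merge_n;\bs{\sigma}_n')$. Repeating the entire procedure on a second flag then yields $\SI_{\beta_n^4}(S_n^4) \cong \uca(\square_n;\bs{\sigma}_n'')$.

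For each removal $r = \e_i^a$, the three hypotheses of Theorem \ref{T:removal} must be checked. Condition \eqref{eq:beta_r} is immediate: at the current stage the incoming dimension sum at $r$ is at most $i-1$ and the outgoing sum is $i+1$ or $n$ (after the upper flag vertices have already been removed), so the minimum is at most $i = \beta(r)$. The weight-vanishing condition is verified using the explicit formula
\[
\bs{\sigma}_n^\triangledown(I,J) = (2-I_q-J_q-K_q)\e_n - \e_{I_r}^{1+3I_q} - \e_{J_r}^{2+3J_q} - \e_{K_r}^{3+3K_q}
\]
with $K = 2n - I - J$: the coefficient of $\e_i^a$ equals $-1$ precisely on a small set of boundary (hence frozen) vertices $\b{v}$ of the current quiver and vanishes elsewhere, with consistent sign. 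One then identifies $\b{u}$ attached to $\b{v}$; typically $\b{u} = \emptyset$ suffices, but any mutable vertex all of whose arrows land in $\b{v}$ must be frozen. Full-rank of $B(\Delta)$ is preserved because deletion of frozen vertices and freezing of mutable ones can only increase or preserve rank.

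Combining these verifications, Theorem \ref{T:removal} together with Lemma \ref{L:projSI} produces the new GUCA structure on the semi-invariant ring of the quiver with one more vertex removed. Iterating $n-1$ times along one flag yields $\uca(\merge_n;\bs{\sigma}_n')$ on the quintuple-flag side, and iterating a further $n-1$ times along a second flag gives $\uca(\square_n;\bs{\sigma}_n'')$ on the quadruple-flag side.

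The main obstacle is the bookkeeping across the iteration: each flag-vertex removal alters the underlying ice quiver (for instance, after $\e_{n-1}^a$ is deleted, $\e_{n-2}^a$ becomes directly connected to the center), so the set $\b{v}$ of relevant frozen vertices and the choice of $\b{u}$ must be recomputed at every step. A clean inductive formulation---giving a closed-form description of the intermediate ice quiver and weight configuration after removing $\e_{n-1}^a,\ldots,\e_{n-k}^a$---is what will make the iteration tractable and pin down the precise shapes of $\merge_n$ and $\square_n$.
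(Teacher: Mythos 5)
Your overall strategy is exactly the one the paper intends: the statement appears inside an exercise, and the paper's own ``proof'' is precisely the iteration of Theorem \ref{T:removal} starting from Example \ref{ex:6flag}, removing the vertices of one flag in the order $n-1,n-2,\dots,1$ and then repeating for a second flag. The check of \eqref{eq:beta_r} and the choice of removal order are correct. The gap is in the execution of the weight-vanishing hypothesis and in your prescription for $\b{u}$.

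Concretely, for the flag attached to the corner $(0,0)$ of $\Delta_{2n}$ (the flag labelled $6$ in the formula for $\bs{\sigma}_n^\triangledown$), the vertices $(i,j)$ whose weight involves $\e_m^6$ are exactly those on the antidiagonal $i+j=n-m$. For $m=n-1$ this is $\{(0,1),(1,0)\}$, which is frozen; but for every $m\le n-2$ this antidiagonal contains the \emph{mutable} interior vertices $(1,n-m-1),\dots,(n-m-1,1)$ --- for instance $(1,1)$ carries the term $-\e_{n-2}^6$. So your claim that the relevant coefficient is supported only on ``boundary (hence frozen) vertices'' fails from the second step on, and with your choice $\b{u}=\emptyset$ the hypothesis of Theorem \ref{T:removal} is simply violated at that step. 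The correct choice is forced by Definition \ref{D:attached} and the proof of Proposition \ref{P:outerproj}: $\b{u}$ must contain every mutable vertex adjacent to $\b{v}$ (otherwise mutation away from $\b{u}$ does not restrict to mutation of $\Deltauv$). Thus at the step removing $\e_m^6$ one takes $\b{v}=\{(i,j)\mid i+j=n-m\}$ and $\b{u}=\{\text{mutable }(i,j)\mid i+j=n-m+1\}$, freezes $\b{u}$, and this newly frozen antidiagonal then becomes the mutable part of the next $\b{v}$; in particular no intermediate mutations are needed, unlike Example \ref{Ex:T335}. Your alternative criterion --- freeze a mutable vertex only if \emph{all} of its arrows land in $\b{v}$ --- again yields the empty set here and is not what the theorem requires. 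Figure \ref{fig:flags} confirms this: $\merge_3$ is $\Delta_3^\triangledown$ with the vertices $i+j\le 2$ deleted and $(1,2),(2,1)$ frozen, and those two frozen vertices arise exactly as the $\b{u}$ of the final step. (A smaller point: deleting frozen columns can only \emph{decrease} the rank of $B$, not increase it, so full rank must be rechecked rather than asserted; for these quivers it does hold, by the same argument as for $\Delta_n$.)
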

\noindent We are not going to give the precise construction of the quivers $\merge_n$ and $\square_n$ because it is rather clear.
When $n=3$, the quiver $\merge_3$ and $\square_3$ is displayed below.

\begin{figure}[h]
        \centering
        \begin{subfigure}[h]{0.3\textwidth}
                $\pentagonthree$
        \end{subfigure}                 {\quad \hfill}
        ~ 
        \begin{subfigure}[h]{0.3\textwidth}
                $\squarethree$
        \end{subfigure}                {\qquad\qquad \hfill} \\
        \caption{The quiver $\merge_3$ (left) and $\square_3$ (right)}\label{fig:flags}
\end{figure}
\end{exercise}

\begin{conjecture} The quiver $S_n^m$ can be projected from a triple flag quiver for any $m\geq 3, n\geq 2$.
The semi-invariant ring $\SI_{\beta_n^m}(S_n^m)$ of any complete $m$-tuple flag is an upper cluster algebra.
\end{conjecture}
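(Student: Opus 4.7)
I propose induction on $m$, with base cases $m \in \{3,4,5,6\}$ already handled by \cite{Fs1}, Example~\ref{ex:6flag}, and the exercises in Section~\ref{ss:ex_vr}. For $m \geq 7$, write $m = 3k - r$ with $k \geq 3$ and $r \in \{0,1,2\}$. The plan is first to realize $S_n^{3k}$ as a projection from the triple flag $T_{kn}$ with its standard dimension vector $\beta_{kn}$, and then, if $r>0$, apply Theorem~\ref{T:removal} to the resulting graded UCA to remove $r$ arms.

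The core construction is the explicit quiver exceptional sequence $\mb{F}$ in $T_{kn}$ of length $3(k-1)$,
$$
\mb{F} \;=\; \{\vep_{an,0}\}_{a=1}^{k-1} \,\cup\, \{\vep_{0,bn}\}_{b=1}^{k-1} \,\cup\, \{\vep_{an,bn}\}_{\substack{a,b\geq 1\\ a+b=k}},
$$
whose members correspond to the non-corner boundary coarse-grid points of step $n$ in the hive quiver $\Delta_{kn}$; that is, they are precisely the $3(k-1)$ frozen vertices of $\Delta_{kn}$ lying at multiples of $n$ along the three sides of the hive triangle. This count matches the vertex deficit $(3kn-2) - (3k(n-1)+1) = 3(k-1)$. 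Each weight has the shape $\e_{kn} - \e_{an}^1 - \e_{bn}^2 - \e_{(k-a-b)n}^3$, so projection through it cuts two or three of the arms of $T_{kn}$ at positions which are multiples of $n$. Direct bookkeeping shows that collectively $\mb{F}$ cuts each of the three arms of $T_{kn}$ at positions $n, 2n, \dots, (k-1)n$, splitting each arm of length $kn-1$ into $k$ sub-arms of length $n-1$ and leaving a single central vertex of dimension $n$ --- precisely the structure of $S_n^{3k}$.

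For $r > 0$, Theorem~\ref{T:removal} applied to the resulting $\uca(S_n^{3k};\bs{\sigma}_n^{3k})$ removes an arm: take its tip as the vertex $r$ (dimension $1$, so \eqref{eq:beta_r} is immediate) and the single frozen tip vertex as the ``attached'' set $\b{v}$. Iterating once or twice produces $S_n^{3k-1}$ or $S_n^{3k-2}$. The UCA conclusion then propagates along the whole construction by combining Theorems~\ref{T:invGUCA} and \ref{T:removal}, with Proposition~\ref{P:extremal_proj} certifying that each projector weight $\vep_{an,bn}$ is extremal in the grading cone $\cone$.

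The main obstacle is verifying that $\mb{F}$ above really is a quiver exceptional sequence of real Schur roots, each extremal in $\cone$, and that its right orthogonal quiver $Q_{\mb{F}}$ is isomorphic to $S_n^{3k}$ with $\iota_{\mb{F}}(\beta_n^{3k}) = \beta_{kn}$. The cleanest route appears to be a secondary induction on $k$: process the three sides of $\mb{F}$ in order (first $\{\vep_{an,0}\}_a$, then $\{\vep_{0,bn}\}_b$, finally $\{\vep_{an,bn}\}_{a+b=k}$), and use the recursion $\beta \mapsto \beta - \innerprod{\vep,\beta}_Q\, \vep$ defining $\pi_{\mb{F}}$ together with \eqref{eq:wtproj} to track the successive degenerations stage by stage. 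The delicate points will be to confirm that each intermediate projection remains an extremal real Schur root of the already-projected quiver (so that Theorem~\ref{T:invGUCA} applies inductively), and to identify the final orthogonal quiver precisely as $S_n^{3k}$ rather than some variant carrying extra arrows or altered vertex dimensions.
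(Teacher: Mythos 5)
You should first be aware that this statement is posed as an open conjecture: the paper offers no proof of it, so your proposal is not being measured against an existing argument but on its own terms. Your reduction of the case $3\mid m$ to the coarse-grid exceptional sequence $\mb{F}=\{\vep_{an,bn}\}$ in $T_{kn}$ is the natural generalization of Example \ref{ex:6flag}, and your bookkeeping (the vertex deficit $3(k-1)$, each cut position being hit by exactly two boundary weights) is consistent. But the entire content of the first half of the conjecture is exactly what you defer to the ``main obstacle'': that $\mb{F}$ admits an ordering making it a quiver exceptional sequence of real Schur roots each of which stays extremal after the preceding projections, and that the completing sequence $\mb{E}$ has quiver $Q_{\mb{F}}\cong S_n^{3k}$ with $\iota_{\mb{F}}(\beta_n^{3k})=\beta_{kn}$ --- in particular that there is exactly one arrow from one end of each sub-arm to a single central root of the correct dimension and no extra arrows between sub-arms. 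Until those $\Hom$/$\Ext$ computations are carried out, the proposal for $m=3k$ reformulates the conjecture rather than proving it.

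The passage from $S_n^{3k}$ to $S_n^{3k-1}$ and $S_n^{3k-2}$ is where the argument as written would fail. Theorem \ref{T:removal} removes a single vertex $r$ of $Q$, so deleting an arm requires $n-1$ successive applications, and at each step one must exhibit a mutation sequence after which $\bs{\sigma}(u)(r)=0$ for every mutable vertex $u$ of the \emph{cluster} quiver, together with a frozen set $\b{v}$ whose weights at $r$ all have one sign. This is precisely the step that Example \ref{Ex:T335} shows can fail (``no matter how we mutate \dots there are at least $3$ vertices $v$ such that $\bs{\sigma}(v)(r)\neq 0$''), and it is why even the quintuple and quadruple cases are left as exercises with explicit removal orders $n-1,n-2,\dots,1$ --- from the centre outward, not from the tip. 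Your description also conflates the two quivers appearing in Theorem \ref{T:removal}: the vertex $r$ lives in $Q$, while $\b{v}$ is a set of frozen vertices of $\Delta$ whose expected cardinality is dictated by the Krull dimension drop $|Q_0|-\innerprod{\beta,\beta}$ (see the Method in Section \ref{ss:ex_vr}); moreover $\b{u}$, not $\b{v}$, is the attached set, and $\Deltauv$ is formed by deleting $\b{v}$ and freezing $\b{u}$. Taking ``the single frozen tip vertex'' as $\b{v}$ is therefore not a meaningful instantiation of the hypotheses. Without exhibiting the required mutation sequences for general $n$ and $k$, or an argument that they exist, the induction does not close.
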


What we will do in the rest of this subsection is to illustrate in a single example on
how to inductively apply the vertex removal technique (Theorem \ref{T:removal}) to find possible cluster structure.
If the method described below fails, we believe that the cluster structure may not exist.
However, the method is not deterministic.

\noindent {\bf The Method:}
Suppose that $\SI_\beta(Q)$ is a GUCA $\uca(\Delta;\bs{\sigma})$, and we want to remove a vertex $r$ of $Q$ satisfying \eqref{eq:beta_r}
Let us assume that $\beta$ is a Schur root, so the stabilizer in general position of the action is zero-dimensional.
Then the Krull dimension of $\SI_\beta(Q)$ is given by
$$\dim\Rep_\beta(Q)-\dim\SL_\beta=|Q_0|-\innerprod{\beta,\beta}.$$
We first compute the difference $d$ of $|Q_0|-\innerprod{\beta,\beta}$ before and after the vertex removal.
If Theorem \ref{T:removal} works, then $\bs{\sigma}(v)(r)$ should vanish for all but exactly $d$ frozen vertices,
where $\bs{\sigma}(v)(r)<0$ (or $>0$). These $d$ vertices are our choice of $\b{v}$.
Otherwise the method fails, or the cluster structure may not exist for $\SI_{\br{\beta}}(\br{Q})$.
Next we need to seek a suitable sequence of mutations such that after application $\bs{\sigma}(u)(r)=0$ for all mutable $u$.
If no such sequence exists, then the method fails, or the cluster structure may not exist.
If succeeds, we take $\Deltauv$, where $\b{u}$ is attached to $\b{v}$.

\begin{example} \label{Ex:T335}
In this example, we explain how to find a cluster algebra structure of the semi-invariant ring $\SI_\beta(T_{3,3,5})$,
where $(T_{3,3,5},\beta)$ is obtained from $(T_6,\beta_6)$ by removing the $5$'s of all three arms, and the $1$'s and $3$'s of the first two arms.
We also explain why the cluster algebra structure may not exist if we further forget either $1$ or $3$ of the third arm.
We suggest that readers play this example with B. Keller's applet \cite{Ke}.

We can directly apply Theorem \ref{T:removal} to remove all $5$'s and one $3$, say of the first arm.
The results correspond to the Figure \eqref{fig:rho5^1}, \eqref{fig:rho5^2}, \eqref{fig:rho5^3}, and \eqref{fig:rho3^1}.
We apply another mutation at $(3,2)$ to make the quiver easier to draw. This step is optional.

Next, to remove $1$ of the first arm, we first mutate at $(1,3)$ and $(1,2)$,
then we find that $\b{v}=\{(1,1),(1,4)\}$ and $\b{u}=\{(1,2)\}$ as required.
So we delete $\b{v}$ and freeze $\b{u}$, and perform an optional mutation at $(1,3)$.
Similarly, to remove $3$ of the second arm, we mutate at $(1,3)$ and $(2,3)$, then we find that $\b{v}=\{(0,3),(3,1)\}$ and $\b{u}=\{(2,3)\}$.
So we delete $\b{v}$ and freeze $\b{u}$, and perform an optional mutation at $(1,3)$.
Finally to remove $1$ of the first arm, we mutate at $(1,3)$ and $(2,1)$, then we find that $\b{v}=\{(1,2),(4,1)\}$ and $\b{u}=\{(2,1)\}$.
So we delete $\b{v}$ and freeze $\b{u}$, and perform an optional mutation at $(2,2)$.
The results correspond to the Figure \eqref{fig:rho1^1}, \eqref{fig:rho3^2}, and \eqref{fig:rho1^2}.
The interested readers can verify using \cite{Ke} that the types of the cluster algebras are affine $E_9,E_8,E_7,D_6,D_5,D_4,A_3$.

\begin{figure}[h]
        \centering
        \begin{subfigure}[h]{0.3\textwidth}
                $\hivesixfive$
                \caption{remove $^15$}    \label{fig:rho5^1}
        \end{subfigure}                 {\hfill}
        ~ 
        \begin{subfigure}[h]{0.3\textwidth}
                $\hivesixfivefive$
                \caption{remove $^25$}    \label{fig:rho5^2}
        \end{subfigure}                 \hfill\hfill\hfill\\
        \begin{subfigure}[h]{0.3\textwidth}
                $\hivesixfivefivefive$
                \caption{remove $^35$}    \label{fig:rho5^3}
        \end{subfigure}                 {\hfill}
        \begin{subfigure}[h]{0.3\textwidth}
                $\hivesixfffthree$
                \caption{remove $^13$}    \label{fig:rho3^1}
        \end{subfigure}                 \hfill\hfill\hfill \\
        \begin{subfigure}[h]{0.3\textwidth}
                $\hivesixfffthreeone$
                \caption{remove $^11$}    \label{fig:rho1^1}
        \end{subfigure}                 {\hfill}
        \begin{subfigure}[h]{0.3\textwidth}
                $\hivesixfffttone$
                \caption{remove $^23$}    \label{fig:rho3^2}
        \end{subfigure}                 \hfill\hfill\hfill \\
        \begin{subfigure}[h]{0.3\textwidth}
                $\hivesixfffttoneone$
                \caption{remove $^21$}    \label{fig:rho1^2}
        \end{subfigure}  \qquad \qquad \qquad
        \caption{Operations towards $\SI_\beta(T_{3,3,5})$ in steps}\label{fig:animals}
\end{figure}

For the last one, the cluster variable associated to a vertex $v$ is semi-invariant of weight $\f_v$,
where $\f_{ij}$ as before and
\begin{align*}
\f_{21}' &= 3\e_6-2\e_1^3-\e_2^1-\e_2^2-\e_4^1-\e_4^2-\e_4^3,\\
\f_{23}' &= 3\e_6-\e_2^1-\e_2^2-2\e_3^3-\e_4^1-\e_4^2,\\
\f_{13}''' &= 2\e_6-\e_1^3-\e_2^1-\e_2^2-\e_3^3-\e_4^2,\\
\f_{32}' &= 2\e_6-\e_1^3-\e_2^1-\e_2^2-\e_3^3-\e_4^1,\\
\f_{22}' &= 2\e_6-\e_1^3-\e_3^3-\e_4^1-\e_4^2.
\end{align*}
One can verify that they all correspond to all real Schur roots.
The semi-invariant ring $\SI_\beta(T_{3,3,5})$ is this graded upper cluster algebra.

However, if we want to further remove $3$ or $1$ of the third arm, then we are unable to choose the set $\b{v}$ because the expected cardinality of $\b{v}$ is $2$. Even worse, no matter how we mutate from \eqref{fig:rho1^2},
there are at least $3$ vertices $v$ such that $\bs{\sigma}(v)(r)\neq 0$.
So we suspect that it may not be an upper cluster algebra.
\end{example}

\section{Cluster Characters from Quivers with Potentials} \label{S:CC}
\subsection{Quivers with Potentials}
In \cite{DWZ1} and \cite{DWZ2}, the mutation of quivers with potentials is invented to model the cluster algebras.
We emphasis here that ice quivers considered here can have arrows between frozen vertices.
Following \cite{DWZ1}, we define a potential $W$ on an ice quiver $\Delta$ as a (possibly infinite) linear combination of oriented cycles in $\Delta$.
More precisely, a {\em potential} is an element of the {\em trace space} $\Tr(\ckQ):=\ckQ/[\ckQ,\ckQ]$,
where $\ckQ$ is the completion of the path algebra $k\Delta$ and $[\ckQ,\ckQ]$ is the closure of the commutator subspace of $\ckQ$.
The pair $(\Delta,W)$ is an {\em ice quiver with potential}, or IQP for short.
For each arrow $a\in \Delta_1$, the {\em cyclic derivative} $\partial_a$ on $\widehat{k\Delta}$ is defined to be the linear extension of
$$\partial_a(a_1\cdots a_d)=\sum_{k=1}^{d}a^*(a_k)a_{k+1}\cdots a_da_1\cdots a_{k-1}.$$
For each potential $W$, its {\em Jacobian ideal} $\partial W$ is the (closed two-sided) ideal in $\ckQ$ generated by all $\partial_a W$.
The {\em Jacobian algebra} $J(\Delta,W)$ is the quotient algebra $\widehat{k\Delta}/\partial W$.
If $W$ is polynomial and $J(\Delta,W)$ is finite-dimensional, then the completion is unnecessary to define $J(\Delta,W)$.
This is the case throughout this paper.

The key notion introduced in \cite{DWZ1,DWZ2} is the {\em mutation} of quivers with potentials and their decorated representations.
For an ice quiver with {\em nondegenerate potential} (see \cite{DWZ1}), the mutation in certain sense ``lifts" the mutation in Definition \ref{D:Qmu}.
Since we do not need mutation in an explicit way, we refer readers to the original text.

\begin{definition} A {\em decorated representation} of a Jacobian algebra $J:=J(\Delta,W)$ is a pair $\mc{M}=(M,M^+)$,
where $M\in \Rep(J)$, and $M^+$ is a finite-dimensional $k^{\Delta_0}$-module.
\end{definition}

Let $\mc{R}ep(J)$ be the set of decorated representations of $J(\Delta,W)$ up to isomorphism. There is a bijection between two additive categories $\mc{R}ep(J)$ and $K^2(\proj J)$ mapping any representation $M$ to its minimal presentation in $\Rep(J)$, and the simple representation $S_u^+$ of $k^{\Delta_0}$ to $P_u\to 0$.
Suppose that $\mc{M}$ corresponds to a projective presentation
$P(\beta_1)\to P(\beta_0)$ for $\beta_1,\beta_0\in \mb{Z}_{\geqslant 0}^{Q_0}$.

\begin{definition} The {\em $\g$-vector} $\g(\mc{M})$ of a decorated representation $\mc{M}$ is the weight vector of its image in $K^b(\proj J)$, that is, $\g=\beta_1-\beta_0$.
The representation $\mc{M}$ is called {\em $\g$-coherent} if $\min(\beta_1(u),\beta_0(u))=0$ for all~vertices~$u$.
\end{definition}

\begin{definition} \label{D:ice}
For an IQP $(\Delta,W)$, {\em freezing} a set of vertices $\b{u}$ is the operation that
we set all vertices in $\b{u}$ to be frozen, but we do not delete arrows between frozen vertices.
We also keep the same potential $W$.
\end{definition}

\begin{remark} This definition is slightly different from \cite{Fs1} where we delete arrows between frozen vertices, and thus change potential as well. This difference can be ignored if we only look at the category of $\mu$-supported representations (see Definition \ref{D:mu_suprep}).
The reason is that for the two different definitions, the two categories are equivalent and their objects take the same value under the cluster character (Definition \ref{D:CC}).
However, if we insist on deleting arrows between frozen vertices, that will cause additional trouble in constructing the two functors in Section \ref{s:resind}.
\end{remark}

\begin{definition} \label{D:mu_suprep}
A decorated representation $\mc{M}$ is called {\em $\mu$-supported} if the supporting vertices of $M$ are all mutable.
We denote by $\mc{R}ep^{\g,\mu}(J)$ the set of all $\g$-coherent $\mu$-supported decorated representations of $J$.
\end{definition}

%
%

\subsection{The Cluster Character}
\begin{definition}[\cite{DWZ2}] \label{D:CC}
We define the {\em cluster character} $C: \mc{R}ep^{\g,\mu}(J) \to \mb{Z}(\b{x})$ by
\begin{equation} \label{eq:CC}
C(\mc{M})= \b{x}^{\g(\mc{M})} \sum_{\e} \chi\big(\Gr^{\e}(M) \big) \hat{\b{y}}^{\e},
\end{equation}
where $\Gr^{\e}(M)$ is the variety parameterizing $\e$-dimensional quotient representations of $M$, and $\chi(-)$ denotes the topological Euler-characteristic.
\end{definition}

\noindent It turns out \cite[Lemma 5.3]{Fs1} that if $\mc{M}\in \mc{R}ep^{\g,\mu}(J)$, then $C(\mc{M})$ is an element in $\uca(\Delta,\b{x})$.
We note that $C(\mc{M})$ has a well-defined $\g$-vector, which is $\g(\mc{M})$.

\begin{theorem}[{\cite[Theorem 5.6]{Fs1}}] \label{T:CC}
Suppose that IQP $(\Delta,W)$ is non-degenerate and $B(\Delta)$ has full rank.
Let $\mc{R}$ be a set of $\g$-coherent $\mu$-supported decorated representations with all distinct $\g$-vectors, then $C$ maps $\mc{R}$ (bijectively) to a set of linearly independent elements in the upper cluster algebra $\uca(\Delta)$.
\end{theorem}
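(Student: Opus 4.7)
The plan is to reduce the claim to the purely algebraic independence criterion already supplied by Lemma \ref{L:independent}. Two ingredients are already at hand: the cited Lemma 5.3 of \cite{Fs1} places every $C(\mc{M})$, for $\mc{M}\in\mc{R}ep^{\g,\mu}(J)$, inside $\uca(\Delta,\b{x})$; and the defining expansion \eqref{eq:CC} visibly has the factored shape \eqref{eq:z} with prefactor $\b{x}^{\g(\mc{M})}$ and ``polynomial in $\b{y}$'' part $F=\sum_{\e}\chi(\Gr^{\e}(M))\hat{\b{y}}^{\e}$. So the core task is to upgrade this shape to a genuine well-defined $\g$-vector statement, and then invoke Lemma \ref{L:independent}.

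First I would verify that $F$ is not divisible by any $y_i$. The term at $\e=0$ contributes $\chi(\Gr^{0}(M))=1$, giving $F$ a nonzero constant term and hence no $y_i$-factor. Combined with the full-rank hypothesis on $B(\Delta)$, which forces $y_1,\dots,y_p$ to be algebraically independent (see Section \ref{s:gv}), this makes $\g(C(\mc{M}))$ unambiguously defined, and a direct reading of \eqref{eq:CC} identifies it with $\g(\mc{M})$. The $\g$-coherence hypothesis enters precisely here: it guarantees that the projective presentation underlying $\mc{M}$ has no cancellations in the components $\beta_1(u)$ and $\beta_0(u)$, so that $\b{x}^{\g(\mc{M})}=\b{x}^{\beta_1-\beta_0}$ is genuinely the monomial prefactor and cannot be absorbed into $F$.

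The rest of the argument is then immediate. Distinctness of the $\g$-vectors on $\mc{R}$ translates through the previous paragraph into distinctness of $\g(C(\mc{M}))$ for $\mc{M}\in\mc{R}$, so Lemma \ref{L:independent} yields linear independence of $\{C(\mc{M})\mid\mc{M}\in\mc{R}\}$ in $\uca(\Delta)$. Bijectivity of $C|_{\mc{R}}$ is then automatic: an equality $C(\mc{M}_1)=C(\mc{M}_2)$ forces $\g(\mc{M}_1)=\g(\mc{M}_2)$, contradicting the distinctness hypothesis.

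The single delicate point, and the place where I expect the main obstacle to lie, is the bookkeeping in the previous paragraph reconciling the convention $\hat{\b{y}}^{\e}$ used in \eqref{eq:CC} with the $y$-monomials $y_u=\b{x}^{-b_u}$ appearing in \eqref{eq:z}: one must confirm that no coefficient-variable factors hidden in $\hat{y}_u$ silently shift the $\b{x}^{\g(\mc{M})}$ prefactor. Everything else — containment in the UCA, nontriviality of $F$, the final appeal to Lemma \ref{L:independent} — is essentially citation, so this verification is the only step requiring genuine care, and it is exactly where the non-degeneracy of $(\Delta,W)$, the full-rank assumption on $B(\Delta)$, and the $\g$-coherence of $\mc{M}$ all combine.
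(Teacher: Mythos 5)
Your argument is correct and is essentially the proof of the cited result \cite[Theorem 5.6]{Fs1}, which this paper only quotes: containment in $\uca(\Delta)$ comes from \cite[Lemma 5.3]{Fs1}, the constant term $\chi(\Gr^{0}(M))=1$ together with the full-rank hypothesis (algebraic independence of the $y_u$) shows $C(\mc{M})$ has well-defined $\g$-vector equal to $\g(\mc{M})$, and Lemma \ref{L:independent} then gives linear independence and injectivity. The notational point you flag is harmless here: since $y_u=\b{x}^{-b_u}$ already incorporates the frozen variables (the rows $b_u$ have $q$ entries), $\hat{\b{y}}$ and $\b{y}$ coincide and no coefficient factors are absorbed into the prefactor.
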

\noindent However, it is not clear to us whether any UCA has a basis with distinct $\g$-vectors.

\begin{definition}[\cite{DF}]
To any $\g\in\mathbb{Z}^{\Delta_0}$ we associate the {\em reduced} presentation space $$\PHom_J(\g):=\Hom_J(P([\g]_+),P([-\g]_+)),$$
where $[\g]_+$ is the vector satisfying $[\g]_+(u) = \max(\g(u),0)$.
We denote by $\Coker(\g)$ the cokernel of a general presentation in $\PHom_J(\g)$.
\end{definition}
\noindent Reader should be aware that $\Coker(\g)$ is just a notation rather than a specific representation.
If we write $M=\Coker(\g)$, this simply means that we take a general presentation in $\PHom_J(\g)$, then let $M$ to be its cokernel.

\begin{definition}[{\cite{Fs1}}] \label{D:mu_supg}
A weight vector $\g\in K_0(\proj J)$ is called {\em $\mu$-supported} if $\Coker(\g)$ is $\mu$-supported.
Let $G(\Delta,W)$ be the set of all $\mu$-supported vectors in $K_0(\proj J)$.
\end{definition}

\begin{definition}[\cite{P}]
We define the {\em generic character} $C_W:G(\Delta,W)\to \mb{Z}(\b{x})$~by
\begin{equation} \label{eq:genCC}
C_W(\g)=\b{x}^{\g} \sum_{\e} \chi\big(\Gr^{\e}(\Coker(\g)) \big) {\b{y}}^{\e}.
\end{equation}
\end{definition}

\noindent It is known \cite[Lemma 5.11]{Fs1} that a general presentation in $\PHom_J(\g)$ corresponds to a $\g$-coherent decorated representation.
So the image of $C_W$ is also contained in $\uca(\Delta,\bs{x})$. In particular, we have that $G(\Delta,W)\subseteq G(\Delta)$.

\begin{remark} \label{R:basis} Assume that we are in the situation of Theorem \ref{T:CC}.
Moreover, we assume that $\uca(\Delta)$ is $\bs{\sigma}$-graded with finite-dimensional graded components.
Let $G(\mc{R})$ be the set of all $\g$-vectors in $\mc{R}$.
If $C(\mc{R})$ forms a basis of $\uca(\Delta)$, then so does $C_W(G(\mc{R}))$ by Lemma \ref{L:independent}.
\end{remark}


\begin{definition} \label{D:model}
We say that an IQP $(\Delta,W)$ {\em models} an algebra $\mc{A}$ if the generic cluster character maps $G(\Delta,W)$ onto a basis of $\mc{A}$.
If $\mc{A}$ is the upper cluster algebra $\uca(\Delta)$, then we simply say that $(\Delta,W)$ is a {\em cluster model}.
If in addition $G(\Delta,W)$ is given by lattice points in some polyhedron, then we say that the model is {\em polyhedral}.
\end{definition}

\noindent In this definition we do not require the IQP to be non-degenerate.
If $\uca(\Delta)$ is $\bs{\sigma}$-graded with finite-dimensional graded components, then by Remark \ref{R:basis} $(\Delta,W)$ being a cluster model is equivalent to that $\uca(\Delta)$ is $\g$-indexed and $G(\Delta,W)=G(\Delta)$.


\section{The IQP Models} \label{S:model}
\subsection{Restriction and Induction} \label{s:resind}

\begin{definition} For an IQP $(\Delta,W)$, {\em deleting} a set of vertices $\b{v}\subset \Delta_0$ is the operation that
we delete all vertices in $\b{v}$ for $\Delta$, and set all incoming and outgoing arrows of $v\in \b{v}$ to be zero in $W$.
We denote the new IQP by $(\Delta_\b{v},W_{\b{v}})$.
\end{definition}

\noindent The operation of deleting $\b{v}$ is the same as {\em restricting} $(\Delta,W)$ to the vertex set $\Delta_0\setminus \b{v}$ as in \cite[Definition 8.8]{DWZ1}.
In what follows, we shall mainly consider the case when $\b{v}$ is a single vertex $e$. The general case can be easily treated by induction.
According to \cite[Proposition 8.9]{DWZ1}, the restriction induces an algebra epimorphism $p:J(\Delta,W)\twoheadrightarrow J(\Delta_e,W_e)$.

We denote $J_e:=J(\Delta_e,W_e)$. Let $\res_e:\Rep(J_e)\to \Rep(J)$ be the restriction of scalar functor induced by $p$.
Concretely, if $N\in\Rep(J_e)$, then the extension of $N$ by zeros to the quiver $\Delta$ is clearly a representation of $J$.
Such a representation is $\res_e(N)$. Conversely, we denote the functor $\otimes_J J_e:\Rep(J)\to \Rep(J_e)$ by $\ind_e$.
It is well-known \cite{Sk} that the functor $\ind_e$ is left adjoint to the exact functor $\res_e$.

We extend the functor $\res_e$ to decorated representations by copying the decorated spaces.
We denote such a map still by $\res_e: \mc{R}ep(J_e)\to \mc{R}ep(J)$.
The functor $\ind_e$ can be naturally extended.
For $P_1\xrightarrow{f} P_0 \in \proj J$, we define $\ind_e(f) \in \proj J_e$ as $P_1\otimes_J J_e \xrightarrow{f\otimes_J J_e} P_0\otimes_J J_e$.
We note that $P_e\otimes_J J_e=0$ but $P_u \otimes_J J_e$ is the indecomposable projective $P_u$ in $\Rep(J_e)$.
So $\g(\ind_e(\mc{M}))=\ll_e(\g(\mc{M}))$, where $\ll_e$ is the linear map forgetting the $e$-th coordinate.

\begin{lemma} \label{L:gdiff} The $\g$-vector of $\mc{M}$ and $\res_e(\mc{M})$ only differ on the $e$-th coordinate.
Moreover, $\g(\res_e(\mc{M}))(e)$ is always non-negative.
\end{lemma}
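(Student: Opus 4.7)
My plan is to compare the minimal projective presentations of $M$ in $\Rep(J_e)$ and of $\res_e(M)$ in $\Rep(J)$ directly, and then read off the $\g$-vectors. Write the minimal $J_e$-presentation $P^e(\beta_1)\xrightarrow{f} P^e(\beta_0)\to M\to 0$ and the minimal $J$-presentation $P(\gamma_1)\to P(\gamma_0)\to\res_e(M)\to 0$. Since $\res_e$ transports the decoration $M^+$ by extension by zero at the vertex $e$, the lemma reduces to proving $\gamma_0(u)=\beta_0(u)$ and $\gamma_1(u)=\beta_1(u)$ for all $u\neq e$, together with $\gamma_0(e)=0$ and $\gamma_1(e)\geq 0$.

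The equalities at $\gamma_0$ come from a direct top computation: for any $N\in\Rep(J_e)$ and $u\neq e$, the $J$-top of $\res_e(N)$ at $u$ coincides with the $J_e$-top of $N$ at $u$, because the only extra arrows in $\Delta$ ending at $u$ originate at $e$ and act as zero on $\res_e(N)$, while at the vertex $e$ the representation vanishes. Applying this to $M$ identifies the projective cover of $\res_e(M)$ in $\Rep(J)$ as $P(\beta_0)$ (extended by zero at $e$), via the epimorphism $p\colon J\twoheadrightarrow J_e$. For the syzygy, set $K=\Ker\bigl(P(\beta_0)\twoheadrightarrow \res_e(M)\bigr)$ and let $\pi\colon P(\beta_0)\twoheadrightarrow\res_e P^e(\beta_0)$ (induced by $p$) have kernel $K_0$; this yields a short exact sequence $0\to K_0\to K\to\res_e(\Img f)\to 0$ in $\Rep(J)$. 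The same top computation applied to $\Img f$ gives $\dim\mathrm{top}_u(\res_e \Img f)=\beta_1(u)$ for $u\neq e$, and right exactness of the top functor forces $\dim\mathrm{top}_u K\geq \beta_1(u)$.

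The main obstacle is the reverse inequality $K_0(u)\subseteq \mathrm{rad}(K)(u)$ for $u\neq e$. Identifying $\ker p$ with the two-sided ideal $(e_e)\subset J$, every element of $K_0(u)$ is a $k$-linear combination of products $xy$ with $x$ a path from some $v$ to $e$ and $y$ a path from $e$ to $u$. Since $u\neq e$, the path $y$ has length $\geq 1$; write $y=\alpha z$ for an arrow $\alpha\colon e\to w$ and $z\in e_w J e_u$, where $w\neq e$ by the no-loop hypothesis on $\Delta$. Then $x\alpha\in (e_e)\cap e_v J e_w = K_0(w)\subseteq K(w)$, and the arrow actions composing $z$ (each of which preserves $K$ as a submodule) exhibit $xy$ as $K(\beta)(\xi)$ for a suitable terminal arrow $\beta$ and $\xi\in K(t(\beta))$; in the degenerate case where $z$ is trivial (so $w=u$), one has $x\alpha=K(\alpha)(x)$ with $x\in K(e)=P(\beta_0)(e)$. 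Either way $xy\in\mathrm{rad}(K)(u)$, which gives $\gamma_1(u)=\beta_1(u)$ for $u\neq e$, while $\gamma_1(e)=\dim\mathrm{top}_e K\geq 0$ holds automatically. The two assertions of the lemma then follow after accounting for the decoration as above.
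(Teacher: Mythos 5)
Your proof is correct, but it runs along a genuinely different track from the paper's. The paper simply quotes the identity $\g(u)=\dim(\Coker \gamma_u)-\dim M(u)+\dim M^+(u)$ from \cite[(1.13)]{DWZ1} and observes that none of the three terms changes for $u\neq e$ when passing from $\mc{M}$ to $\res_e(\mc{M})$, with non-negativity at $e$ immediate since $\res_e(M)(e)=0=M^+(e)$; it is a two-line verification, but it leans on the nontrivial DWZ formula expressing $\g$-vectors of minimal presentations through the maps $\gamma_u$. You instead build the minimal $J$-presentation of $\res_e(M)$ directly from the minimal $J_e$-presentation: the identification $J_e\cong J/(e_e)$ (which is exactly the content of the cited \cite[Proposition 8.9]{DWZ1}, so you may use it freely), the matching of tops away from $e$, the exact sequence $0\to K_0\to K\to \res_e(\Img f)\to 0$ for the syzygies, and the check that $K_0(u)\subseteq \mathrm{rad}(K)(u)$ for $u\neq e$ by peeling an arrow off each path through $e$. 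All of these steps hold up (your terminal-arrow reduction is if anything slightly more involved than necessary --- splitting off the \emph{last} arrow of $y$ lands you in $\mathrm{rad}(K)(u)$ in one step). What your route buys is self-containedness plus a sharper structural statement: the minimal $J$-presentation of $\res_e(M)$ is the $J_e$-one with some copies of $P_e$ added to the first term, which in particular makes the non-negativity at $e$ transparent and dovetails with the induction/restriction formalism of Lemma \ref{L:resind}. The cost is length; the paper's citation-based argument is essentially free given \cite{DWZ1}.
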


\begin{proof} Recall from \cite[(1.13)]{DWZ1} that \begin{equation}
\label{eq:delta2dim} \g(u)=\dim(\Coker \gamma_u)-\dim M(u) + \dim M^+(u). \end{equation}
We refer readers to \cite[Section 10]{DWZ1} for the definition of the map $\gamma_u$.
Comparing $\mc{M}$ and $\res_e(\mc{M})$, we find that there is no changes on the three terms on the right for $u\neq e$.
The last statement is also clear from the equation.
\end{proof}

\begin{lemma} \label{L:resind} The composition $\ind_e\cdot \res_e$ is the identity functor.
In particular, if $e$ is frozen and $\Coker(f)$ is $\mu$-supported, then so is $\Coker(\ind_e(f))$.
\end{lemma}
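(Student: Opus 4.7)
The plan is to treat the two assertions separately, leveraging the standard behavior of restriction/extension of scalars along a surjective ring homomorphism.

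For the first assertion, I would begin by making the projection $p\colon J \twoheadrightarrow J_e$ explicit: its kernel is the two-sided ideal $I=Je_eJ$, and $\ind_e$ is then the functor $-\otimes_J J_e$, which on any right $J$-module $N$ computes $N/NI$. The key elementary fact is that if $N$ already admits a $J_e$-structure (equivalently, $NI=0$), then $N\otimes_J J_e = N$ canonically. Applied to $\res_e(\mc{N})$ for $\mc{N}\in\mc{R}ep(J_e)$: by construction the $J$-action on $\res_e(N)$ factors through $p$, so $NI=0$, and $\ind_e(\res_e(N))=N$ on underlying modules. On the level of presentations, $\ind_e$ is defined by tensoring $P_1\to P_0$ with $J_e$, and since both $\res_e$ and $\ind_e$ simply copy the decoration spaces indexed by vertices of $\Delta_e$ (which are unaffected), the equality $\ind_e\circ\res_e=\op{Id}$ holds as functors on $\mc{R}ep(J_e)$.

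For the ``in particular'' part, I would observe that $\Coker(\ind_e(f)) = \Coker(f)\otimes_J J_e = M\otimes_J J_e$, where $M=\Coker(f)$. Since $e$ is frozen, the $\mu$-supported hypothesis on $M$ forces $M(e)=0$, i.e., $Me_e=0$. Because $I=Je_eJ$, the right ideal action gives
\[ MI \;=\; M\cdot Je_eJ \;=\; (MJ)\,e_e\,J \;=\; Me_e\,J \;=\; 0, \]
so $M\otimes_J J_e = M$ as abelian groups, with the induced $J_e$-structure matching the one obtained from $p$. The supporting vertices of $\Coker(\ind_e(f))$ in $\Delta_e$ are thus exactly the supporting vertices of $M$ in $\Delta$. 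Since $e$ is frozen, the set of mutable vertices of $\Delta_e$ equals that of $\Delta$, so $\Coker(\ind_e(f))$ remains $\mu$-supported.

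There is no real obstacle; the only subtle point is to ensure the functorial claim is stated for the correct categories, namely decorated representations presented via $K^2(\proj J)$. The choice of (minimal) presentation may change under $\res_e$ and back under $\ind_e$, but since the associated cokernel and decoration datum are preserved, the identification at the level of $\mc{R}ep$ is unambiguous. The frozenness of $e$ is used precisely to ensure that $\mu$-support is a property transported across the deletion, since otherwise the mutable set shrinks.
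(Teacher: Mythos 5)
Your proof is correct, but it takes a genuinely different route from the paper's. The paper argues by abstract nonsense: since $\ind_e\cdot\res_e$ is right exact, it suffices to check that it is the identity on the indecomposable projectives of $J_e$, and this is done by applying $\ind_e$ to the $J$-projective presentation $mP_e\to P_u\to \res_e(P_u)\to 0$ supplied by Lemma \ref{L:gdiff}; the ``in particular'' clause is then deduced by writing $\Coker(f)=\res_e(N)$ for some $\mu$-supported $J_e$-module $N$ (possible precisely because $e$ is frozen, so $\mu$-supportedness forces vanishing at $e$). You instead compute $\ind_e(-)=-\otimes_J J_e\cong -/(-)I$ with $I=\Ker(p)=Je_eJ$ and observe that $NI=0$ for any module pulled back along $p$. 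Your argument is more self-contained in that it avoids Lemma \ref{L:gdiff} and the reduction to projectives, but it does lean on the identification of $\Ker(p)$ with the two-sided ideal generated by the idempotent $e_e$; this is essentially the content of the cited restriction result of Derksen--Weyman--Zelevinsky and is worth stating as such rather than asserting it outright. Both proofs treat the decoration spaces by simply copying them, and your closing remarks --- that $\Coker(\ind_e(f))=\Coker(f)\otimes_J J_e$ by right exactness and that deleting the frozen vertex $e$ leaves the set of mutable vertices unchanged --- are exactly where the frozenness hypothesis enters in the paper as well.
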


\begin{proof} Since the composition is right exact, it suffices to show that it is an identity on the indecomposable projective representations.
By Lemma \ref{L:gdiff}, for any $P_u \in \Rep(J_e)$ we have the following presentation
$m P_e\to P_u\to \res_e(P_u)\to 0$ for some $m\in\mb{Z}_{\geqslant 0}$.
We apply the right exact functor $\ind_e$ to the exact sequence, and get
$$0=m \ind_e(P_e)\to P_u=\ind_e(P_u)\to \ind_e \res_e(P_u)\to 0.$$
If $e$ is frozen, then $f=\res_e(f')$ for some $\mu$-supported $f'$. The last statement follows.
\end{proof}

Suppose that $P_0=\bigoplus_{i=1}^m P_{u_i}$ and $P_1=\bigoplus_{j=1}^n P_{v_j}$.
Then we can view $P_0,P_1$ as row vectors with entries in $P_{u_i},P_{v_j}$, and $f:P_1\to P_0$ can be represented by an $n\times m$ matrix $(a_{ji})$ with $a_{ji}$ a linear combination of paths from $u_i$ to $v_j$.
From this description, we see that the functor $\otimes_J J_e$ induces a surjective algebraic map $\Hom_J(P_1,P_0)\to \Hom_{J_e}(P_1\otimes_{J}J_e,P_0\otimes_{J}J_e)$.
In particular, if $f$ is general enough in $\Hom_J(P_1,P_0)$, then
both $C(\mc{M})$ and $C(\ind_e(\mc{M}))$ take the value of the generic character, where $\mc{M}$ is the decorated representation corresponding to $f$.

\subsection{Orthogonal Projection}
We first focus on the case when $e$ is frozen.
We keep our assumption that $\ep=\bs{\sigma}(e)$ is real.

\begin{lemma} \label{L:commute_e} We have the following commutative diagram for $e$ frozen.
$$\vcenter{\xymatrix@C=5ex{
\mc{R}ep^{\g,\mu}(J) \ar@{^(->}[r]^{C} \ar@{->>}[d]_{\ind_e} & \uca(\Delta,\b{x};\bs{\sigma}) \ar@{->>}[d]_{\ell_e} \\
\mc{R}ep^{\g,\mu}(J_e) \ar@{^(->}[r]^{C\quad} & \uca(\Delta_e,\b{x}_e;\bs{\sigma}_e)  
}}$$
\end{lemma}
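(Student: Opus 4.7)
Plan. The four maps are essentially already in place; the real work is the commutativity $\ell_e\circ C=C\circ \ind_e$ on $\mc{R}ep^{\g,\mu}(J)$.

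First I would dispose of well-definedness. That $C$ lands in $\uca(\Delta,\b{x};\bs{\sigma})$ as a homogeneous element of weight $\g(\mc{M})\bs{\sigma}$ is \cite[Lemma 5.3]{Fs1}. The functor $\ind_e$ simply drops the $e$-component of the projective presentation, so it preserves $\g$-coherence on the remaining coordinates, and the mutable vertices are unchanged so it preserves $\mu$-support. Surjectivity of $\ell_e$ onto $\uca(\Delta_e,\b{x}_e;\bs{\sigma}_e)$ is Proposition \ref{P:subalgebra} (applicable since $e$ is frozen). Surjectivity of $\ind_e$ uses the identity $\ind_e\circ\res_e=\op{id}$ from Lemma \ref{L:resind}, after observing that $\res_e$ takes $\mc{R}ep^{\g,\mu}(J_e)$ into $\mc{R}ep^{\g,\mu}(J)$: $\res_e$ extends by zero at the frozen vertex $e$ (so $\mu$-support is preserved) and Lemma \ref{L:gdiff} controls the $e$-coordinate of the $\g$-vector.

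Second, for commutativity, fix $\mc{M}\in\mc{R}ep^{\g,\mu}(J)$. The crucial structural observation is that since $e$ is frozen and $\mc{M}$ is $\mu$-supported, $M(e)=0$; every arrow incident to $e$ therefore acts as zero on $M$, so $M$ factors through $p\colon J\twoheadrightarrow J_e$ and is canonically identified with $\ind_e M=M\otimes_J J_e$ as a $J_e$-module. Unpacking,
$$\ell_e(C(\mc{M})) = \ell_e(\b{x}^{\g(\mc{M})})\sum_{\e}\chi(\Gr^\e(M))\,\ell_e(\hat{\b{y}}^\e),$$
$$C(\ind_e\mc{M}) = \b{x}_e^{\g(\ind_e\mc{M})}\sum_{\e'}\chi(\Gr^{\e'}(\ind_e M))\,(\hat{\b{y}}')^{\e'},$$
I would match piece-by-piece: the $\b{x}$-factor by $\ell_e(\b{x}^{\g(\mc{M})})=\b{x}_e^{\ll_e(\g(\mc{M}))}=\b{x}_e^{\g(\ind_e\mc{M})}$ from the definition of $\ll_e$; the $\hat{\b{y}}$-factor by Lemma \ref{L:projB}, which gives $\ell_e(\hat{y}_u)=\hat{y}_u'$ for each mutable $u$ and hence $\ell_e(\hat{\b{y}}^\e)=(\hat{\b{y}}')^\e$ whenever $\e(e)=0$; and the Euler-characteristic coefficient by the identification $M\cong\ind_e M$ as $J_e$-modules. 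Since $M(e)=0$, only $\e$ with $\e(e)=0$ contribute nontrivially on either side.

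The main (small) technical point is the identification of quiver Grassmannians: one needs a natural bijection between $\e$-dimensional quotient $J$-modules of $M$ and $\e$-dimensional quotient $J_e$-modules of $\ind_e M$ for $\e(e)=0$. This is essentially automatic because the $J$- and $J_e$-submodule structures on a representation with $M(e)=0$ coincide — the relations coming from arrows incident to $e$ are vacuous on such modules — giving an isomorphism of varieties, not merely a bijection of points. Once this is verified, the three pieces assemble to the required identity $\ell_e\circ C=C\circ\ind_e$, completing the square.
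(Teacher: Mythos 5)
Your proof is correct and takes essentially the same route as the paper's: both exploit that a $\mu$-supported $\mc{M}$ has $M(e)=0$ for $e$ frozen, so $\mc{M}=\res_e(\mc{N})$ is canonically a $J_e$-module, and then match the $\b{x}$-monomial via $\ll_e$ and the $y$-monomials via Lemma \ref{L:projB}. Your explicit identification of the quiver Grassmannians is a detail the paper leaves implicit.
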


\begin{proof} The surjectivity of $\ell_e$ follows from Proposition \ref{P:subalgebra}.
For $\mc{M}\in \mc{R}ep^{\g,\mu}(J)$, we have that $\mc{M}= \res_e(\mc{N})$ for some $\mc{N} \in \mc{R}ep^{\g,\mu}(J_e)$ because $e$ is frozen. Recall from Lemma \ref{L:resind} that $\ind_e\cdot \res_e(\mc{N}) = \mc{N}$.
By the definition of $\res_e$, $\mc{N}$ is isomorphic to $\res_e(\mc{N})$ restricting to the subquiver $\Delta_e$.
By definition $C(\mc{M}) = \b{x}^{\g(\mc{M})} \sum \chi(\Gr^{\e}(\mc{M})) \b{y}^{\e}$.
By Lemma \ref{L:projB} we have that
$$\ell_e(C(\mc{M})) = {\ell}_e(\b{x}^{\g(\mc{M})}) \sum \chi(\Gr^{\e}(\mc{M})) (\b{y}')^{\e}.$$
On the other hand, $$C(\ind_e(\mc{M})) = C(\mc{N}) =\b{x}_e^{\g(\mc{N})} \sum \chi(\Gr^{\e}(\mc{M})) (\b{y}')^{\e}.$$
Since ${\ell}_e(\b{x}^{\g(\mc{M})}) = \b{x}_e^{\ll_e(\g(\mc{M}))}= \b{x}_e^{\g(\mc{N})}$, we conclude that
$\ell_e C=C\ind_e$.

\end{proof}

\begin{remark} If $e$ is a projector, then we have the embedding $i_e: \uca(\Delta_e;\bs{\sigma}_e) \hookrightarrow \uca(\Delta;\bs{\sigma})$.
However, we want to warn readers that another functor $\res_e$ and the embedding do not fit into the above commutative diagram in general, i.e., $C \res_e \neq i_eC$.
\end{remark}

\begin{theorem} \label{T:frozen_model}  Suppose that $e$ is a projector in $\uca(\Delta;\bs{\sigma})$.
If $(\Delta,W)$ models $\uca(\Delta;\bs{\sigma})$, then $(\Delta_e,W_e)$ models $\uca(\Delta_e;\bs{\sigma}_e)$.
\end{theorem}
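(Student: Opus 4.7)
The plan is to first reduce to the case where $e$ is frozen, and then deduce the theorem from the commutative diagram of Lemma \ref{L:commute_e} combined with Corollary \ref{C:Gproj_e}. For the reduction, I would simply freeze $e$ in $\Delta$ without altering the potential $W$: this leaves the Jacobian algebra unchanged, and by Lemma \ref{L:reduce2frozen} together with the projector hypothesis the upper cluster algebras coincide after passing to the $\ep^\perp$ part, which is all the theorem cares about. So one may assume $e$ is already frozen in $\Delta$.

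With $e$ frozen, I would push the basis $\mc{B}=C_W(G(\Delta,W))$ of $\uca(\Delta;\bs{\sigma})$ through $\ell_e$. Lemma \ref{L:commute_e} rewrites $\ell_e(C_W(\g))=C_{W_e}(\ll_e(\g))$, and Proposition \ref{P:subalgebra} together with the projector assumption makes $\ell_e$ surject onto $\mc{S}:=\uca(\Delta_e;\bs{\sigma}_e)$. Hence $C_{W_e}(\ll_e(G(\Delta,W)))$ is a spanning set of $\mc{S}$. Corollary \ref{C:Gproj_e} asserts that $\mc{S}$ is $\g$-indexed with $\g$-vectors sitting inside $\ll_e(G(\Delta))$, so by Lemma \ref{L:independent} any subset of $\mc{S}$ whose elements carry distinct well-defined $\g$-vectors is automatically linearly independent. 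Since $C_{W_e}(\g')$ has $\g$-vector $\g'$, this upgrades the spanning set to a basis indexed bijectively by $\ll_e(G(\Delta,W))\subseteq G(\Delta_e,W_e)$, the inclusion being immediate from Lemma \ref{L:resind} and the fact that $\ind_e$ can only shrink the support of a decorated representation.

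The remaining and main obstacle is the reverse containment $G(\Delta_e,W_e)\subseteq\ll_e(G(\Delta,W))$. Given $\g'\in G(\Delta_e,W_e)$ with a generic cokernel $\mc{N}\in\mc{R}ep^{\g,\mu}(J_e)$, I would lift it via restriction of scalars to $\mc{M}:=\res_e(\mc{N})\in\mc{R}ep(J)$. By Lemma \ref{L:gdiff} the $\g$-vector $\g$ of $\mc{M}$ agrees with $\g'$ off the $e$-coordinate, and $\mc{M}$ remains $\mu$-supported because $e$ is frozen. The delicate point is verifying that $\mc{M}$ is actually realized as the cokernel of a \emph{general} presentation in $\PHom_J(\g)$, so that $\g\in G(\Delta,W)$; this should follow from the surjectivity of the natural map $\Hom_J(P_1,P_0)\twoheadrightarrow \Hom_{J_e}(\ind_e P_1,\ind_e P_0)$ noted at the end of Section \ref{s:resind}, combined with the identity $\ind_e\circ\res_e=\op{id}$ of Lemma \ref{L:resind}. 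Once this set-theoretic identification $\ll_e(G(\Delta,W))=G(\Delta_e,W_e)$ is in hand, $C_{W_e}$ sends $G(\Delta_e,W_e)$ bijectively onto a basis of $\mc{S}$, which is precisely what it means for $(\Delta_e,W_e)$ to model $\mc{S}$.
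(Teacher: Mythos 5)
Your handling of the frozen case is essentially the paper's argument and is sound: push the basis $C_W(G(\Delta,W))$ through $\ell_e$, identify the image with generic characters of $(\Delta_e,W_e)$ via Lemma \ref{L:commute_e}, and use distinctness of $\g$-vectors (Lemma \ref{L:independent}, Corollary \ref{C:Gproj_e}) to upgrade the spanning set to a basis. Two remarks on that part. First, the ``main obstacle'' you isolate, namely $G(\Delta_e,W_e)\subseteq \ll_e(G(\Delta,W))$, is not really an obstacle: $C_{W_e}(G(\Delta_e,W_e))$ lies in $\uca(\Delta_e)$ and is linearly independent because its elements have distinct well-defined $\g$-vectors, and it contains the basis $C_{W_e}(\ll_e(G(\Delta,W)))$; a linearly independent set containing a basis equals that basis, which forces the reverse containment. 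Second, your direct lifting argument for that containment is itself incomplete: knowing that $\res_e(\mc{N})$ is $\mu$-supported with $\g$-vector $\g$ does not show that the \emph{generic} cokernel in $\PHom_J(\g)$ is $\mu$-supported. Surjectivity of $\Hom_J(P_1,P_0)\to\Hom_{J_e}(\ind_e P_1,\ind_e P_0)$ only yields $\ind_e(\Coker(\g))\cong\Coker(\g')$, which is consistent with $\Coker(\g)$ carrying extra support at $e$ that $\ind_e$ kills; one would need a semicontinuity argument. Better to drop this step entirely.

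The genuine gap is the reduction to the frozen case. Freezing $e$ does leave the Jacobian algebra unchanged, but it changes both objects you need: $\uca(\Delta^e;\bs{\sigma})$ is in general a proper subalgebra of $\uca(\Delta;\bs{\sigma})$ (Lemma \ref{L:reduce2frozen} only identifies them after inverting $e$), and, more seriously, $G(\Delta^e,W)$ is in general a proper subset of $G(\Delta,W)$, since after freezing, $\mu$-support forbids the generic cokernel from being supported at $e$. Hence the hypothesis that $(\Delta,W)$ models $\uca(\Delta;\bs{\sigma})$ does not transfer to $(\Delta^e,W)$ for free, and your frozen-case argument applied to $(\Delta^e,W)$ only has access to the characters $C_W(\g)$ with $\g$ in the smaller set $G(\Delta^e,W)$; you must still prove that their images under $\ell_e$ span $\uca(\Delta_e;\bs{\sigma}_e)$. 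This is exactly where the paper does the real work: for each $\g\in G(\Delta,W)$ one finds $m\gg 0$ with $\g+m\e_e\in G(\Delta^e,W)$ and observes that $\ell_e(C_W(\g))$ and $\ell_e(C_W(\g+m\e_e))$ share the $\g$-vector $\g-\innerprod{\ep,\g\bs{\sigma}}\e_e$, so that by Lemma \ref{L:independent} nothing is lost from the span in passing from $G(\Delta,W)$ to $G(\Delta^e,W)$. Without an argument of this kind the reduction is unjustified; note also that Lemma \ref{L:commute_e} genuinely requires $e$ frozen (for mutable $e$ the variable $y_e$ does not transform correctly under $\ell_e$), so the reduction cannot be bypassed.
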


\begin{proof} We first assume that $e$ is frozen. We first show that $C_{W_e}(G(\Delta_e,W_e))$ forms a basis of $\uca(\Delta_e)$.
By assumption, there is a subset $\mc{R}$ of $\mc{R}ep^{\g,\mu}(J(\Delta,W))$ with distinct $\g$-vectors in $\Ker \ep_l$ (defined after Remark \ref{R:cluster})
such that $C(\mc{R})$ forms a basis of $\uca(\Delta;\bs{\sigma})_{\ep^\perp}$.
Moreover, by the remark in the end of Section \ref{s:resind},
each element in $\mc{R}$ can be chosen to take the value of the generic character.
Since $e$ is a projector, $\uca(\Delta_e;\bs{\sigma}_e)=\uca(\Delta;\bs{\sigma})_{\ep^\perp}$ by Proposition \ref{P:subalgebra}.
So $\ell_e (C(\mc{R}))=C(\ind_e(\mc{R}))$ forms a basis of $\uca(\Delta_e;\bs{\sigma}_e)$.
Since $\g(\ind_e(\mc{M}))=\ll_e(\g(\mc{M}))$,
by a similar argument as in Corollary \ref{C:Gproj_e} we see that
elements in $\ind_e(\mc{R})$ have distinct $\g$-vectors.

Now we come back to the general situation, that is, $e$ is not necessary frozen.
We first claim that $\ell_e(C_W(G(\Delta^e,W)))$ spans $\uca(\Delta_e;\bs{\sigma}_e)$ if $C_W(G(\Delta,W))$ spans $\uca(\Delta;\bs{\sigma})$.
For any such $\g\in G(\Delta,W)$, we can always find some $m$ sufficiently large such that $\Coker(\g+m\e_e)$ is not supported on $e$, i.e., $\g+m\e_e\in G(\Delta^e,W)$.
By the definition of $\ell_e$, the $\g$-vector of $\ell_e(C_W(\g))$ is equal to $\g-\innerprod{\ep,\g\bs{\sigma}} \e_e$.
Note that this is also the $\g$-vector of $\ell_e(C_W(\g+m\e_e))$.
Since $\ell_e(C_W(G(\Delta,W)))$ spans $\uca(\Delta_e,\b{x}_e;\bs{\sigma}_e)$, so is $\ell_e(C_W(G(\Delta^e,W)))$ by Lemma \ref{L:independent}.
Finally, we can apply the previous conclusion to $(\Delta^e,W)$.
\end{proof}

\begin{remark} If the model of $\uca(\Delta)$ is polyhedral, then so is the model of $\uca(\Delta_e)$ because $\ll_e$ is totally unimodular.
The similar remark also applies to Theorem \ref{T:hat}.
\end{remark}

\begin{corollary} \label{C:model_proj} Suppose that $(\Delta,W)$ models $\SI_{\beta}(Q)$ and $B(\Delta)$ has full rank.
If $\bs{\sigma}(e)={^\sigma}\ep$ is a real weight extremal in $\mb{R}^+\Sigma_\beta(Q)$, then $(\Delta_e,W_e)$ models $\SI_{\beta_\ep}(Q_\ep)$.
\end{corollary}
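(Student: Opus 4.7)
The plan is to chain together the three main tools developed earlier, namely Proposition \ref{P:extremal_proj}, Theorem \ref{T:invGUCA}, and Theorem \ref{T:frozen_model}, and observe that their hypotheses are all met under the assumptions of the corollary.

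First I would verify that $\b{x}(e)$ is a projector in $\uca(\Delta,\b{x};\bs{\sigma})$ in the sense of Definition \ref{D:projector}. Since $(\Delta,W)$ models $\SI_\beta(Q)$, in particular $\uca(\Delta,\b{x};\bs{\sigma}) \cong \SI_\beta(Q)$ as graded algebras. The weight $\bs{\sigma}(e)={^\sigma}\ep$ is assumed real and extremal in $\cone$, so Proposition \ref{P:extremal_proj} asserts that ${^\sigma}\ep$ is a projector for $\SI_\beta(Q)$; as a consequence, any nonzero element of weight ${^\sigma}\ep$, in particular the cluster variable $\b{x}(e)$, satisfies the projector condition $(\uca(\Delta)_e)_{{^\sigma}\ep^\perp}=\uca(\Delta)_{{^\sigma}\ep^\perp}$.

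Next I would apply Theorem \ref{T:frozen_model} to this projector $e$: it yields that $(\Delta_e,W_e)$ models $\uca(\Delta_e,\b{x}_e;\bs{\sigma}_e)$. Independently, Theorem \ref{T:invGUCA} (whose hypotheses are precisely those of the corollary, since $(\Delta,W)$ being a cluster model forces $\SI_\beta(Q)$ to be the naturally graded UCA $\uca(\Delta,\b{x};\bs{\sigma})$) identifies $\SI_{\beta_\ep}(Q_\ep)$ with the naturally graded UCA $\uca(\Delta_e,\b{x}_e;\pi_\ep(\bs{\sigma}))$. Under the isometry $\iota_\ep$, $\pi_\ep(\bs{\sigma})$ corresponds to $\bs{\sigma}_e$, so these two upper cluster algebras coincide up to this identification. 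Composing the two isomorphisms gives that $(\Delta_e,W_e)$ models $\SI_{\beta_\ep}(Q_\ep)$.

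I do not expect any serious obstacle here; the corollary is essentially a bookkeeping statement that combines the algebraic content of Theorem \ref{T:invGUCA} (which passes the UCA structure from $\SI_\beta(Q)$ to $\SI_{\beta_\ep}(Q_\ep)$ under orthogonal projection through a real extremal weight) with the representation-theoretic content of Theorem \ref{T:frozen_model} (which passes the IQP cluster model across a projector). The one subtlety worth spelling out in the write-up is the identification of the grading lattices: Proposition \ref{P:extremal_proj} supplies the projector condition in the weight lattice $\mb{Z}^{Q_0}$ of $\SI_\beta(Q)$, while Theorem \ref{T:frozen_model} is stated in terms of the weight configuration $\bs{\sigma}_e$; the compatibility is exactly the naturality of $\iota_\ep$ and $\pi_\ep$ noted in the remark following Theorem \ref{T:invGUCA}, so no new verification is needed.
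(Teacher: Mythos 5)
Your proposal is correct and follows exactly the paper's own argument, which simply cites Proposition \ref{P:extremal_proj}, Theorem \ref{T:invGUCA}, and Theorem \ref{T:frozen_model} in the same combination. The extra care you take with the identification of $\bs{\sigma}_e$ and $\pi_\ep(\bs{\sigma})$ via $\iota_\ep$ is precisely the content of the remark following Theorem \ref{T:invGUCA}, so nothing further is needed.
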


\begin{proof} This follows from Theorem \ref{T:invGUCA}, Proposition \ref{P:extremal_proj}, and Theorem \ref{T:frozen_model}.
\end{proof}

\subsection{Example continued}
The quiver $\Delta_n$ can be equipped with a {\em rigid} potential $W_n$ (see \cite{Fs1} for detail).
It is proved in \cite{Fs1} that $(\Delta_n,W_n)$ models the semi-invariant ring $\SI_{\beta_n}(T_n)$.
Moreover, $G(\Delta_n,W_n)$ is given by the lattice points in the hive cone ${\sf G}_n$ defined as follows.
A maximal straight path in $\Delta_n$ must start from a frozen vertex and end in another frozen vertex, eg., from $(i,0)$ to $(0,i)$.
We define ${\sf G}_n$ by \begin{equation} \label{eq:cone_sp}
\text{$\sum_{u\in p} \g(u) \geq 0$ for
all strict subpaths $p$ of a maximal straight path.}  \end{equation}
Here, subpaths include all trivial paths $e_v$ for $v$ frozen, and the notation $\sum_{u\in p} \g(u)$ stand for
$\g(u_0)+\g(u_1)+\cdots+ \g(u_t)$ if $p$ is the path $u_0 \xrightarrow{} u_1 \xrightarrow{}\cdots\xrightarrow{} u_t$.

\begin{definition} Given a weight configuration $\bs{\sigma}$ of a quiver $\Delta$ and a convex polyhedral cone ${\sf G} \subset \mb{R}^{\Delta_0}$,
we define the (not necessarily bounded) convex polytope ${\sf G}(\sigma)$ as ${\sf G}$ cut out by the hyperplane sections $\g \bs{\sigma} = \sigma$.
\end{definition}
\noindent It follows from Knutson-Tao \cite{KT} and Derksen-Weyman \cite{DW1} (see \cite{Fs1}) that the lattice points in ${\sf G}_n(\sigma)$ counts the Littlewood-Richardson coefficient $c_{\mu(\sigma),\nu(\sigma)}^{\lambda(\sigma)}$.
We refer readers to \cite[7.1]{DW2} for the definition of the partitions $\lambda(\sigma),\mu(\sigma)$, and $\nu(\sigma)$.

\begin{example}[Example \ref{ex:cflag} continued]
By Corollary \ref{C:model_proj}, $G(\Delta_n^{\flat};\bs{\sigma}_n^{\flat})$ is given by lattice points in some polyhedral cone
denoted by ${\sf{G}}_n^\flat$.
Similar to the calculation in \cite[Section 6]{Fs1}, we find that the cone ${\sf{G}}_n^\flat$ can also be described by \eqref{eq:cone_sp}.

By a calculation similar to \cite[Proposition 1]{DW1}, we find that
the dimension of each graded piece $\SI_{\beta_n'}(SF_n)_\sigma$ counts the {\em Kostka number} $K_{\lambda}^{\mu}$,
which is by definition the multiplicity of the irreducible complex representation $\S_\lambda$ of $\GL_n$
in the tensor product $\S_{\mu(1)}\otimes \S_{\mu(2)}\otimes \cdots\otimes \S_{\mu(n)}$.
The partitions $\lambda$ and the weight $\mu$ are determined by $\sigma$ as follows
$$\lambda(\sigma)=((n-1)^{-\sigma(n-1)},\dots,2^{-\sigma(2)},1^{-\sigma(1)})^*,\ \mu(\sigma)=(\sigma(1'),\sigma(2'),\dots,\sigma(n')).$$
Alternatively, one can see this from the identification in Theorem \ref{T:SIemb}.
Under the identification $\SI_{\beta_n'}(SF_n)_{\sigma} \cong \SI_{\beta_n}(T_n)_{\iota_{\mb{F}}(\sigma)}$, the Kostka coefficient can be interpreted as certain Littlewood-Richardson coefficient.
This interpretation is well-known.

\begin{proposition} \label{P:Kostka} The Kostka number $K_{\lambda(\sigma)}^{\mu(\sigma)}$ is counted by the lattice points in ${\sf G}_n^\flat(\sigma)$.
\end{proposition}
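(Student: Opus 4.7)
The plan is to chain together three results: the polyhedral IQP model for $\SI_{\beta_n'}(SF_n)$ obtained from the model for $\SI_{\beta_n}(T_n)$, a cone description via the straight-path inequalities, and the stated identification of the graded pieces with Kostka numbers. First I would observe that each $\bs{\sigma}_n(0,n-i) = \f_{i,n-i}$ is a real Schur root extremal in $\mb{R}^+\Sigma_{\beta_n}(T_n)$, as noted in Example \ref{ex:cflag}. Since $(\Delta_n,W_n)$ models $\SI_{\beta_n}(T_n)$ and $B(\Delta_n)$ has full rank, an iterated application of Corollary \ref{C:model_proj} along the exceptional sequence $\mb{F} = \{\vep_{i,n-i}\}_{1\leq i\leq n-1}$ yields that $(\Delta_n^\flat, W_n^\flat)$ models $\SI_{\beta_n'}(SF_n)$, where $W_n^\flat$ is the restriction of $W_n$ to $\Delta_n^\flat$.

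By the remark following Theorem \ref{T:frozen_model}, this model is polyhedral: the defining cone of $G(\Delta_n^\flat, W_n^\flat)$ is the image under the (totally unimodular) coordinate-forgetting map $\ll_{\b{v}}$ of the intersection of ${\sf G}_n$ with the hyperplane sections cut out by the deleted weights. I would then verify by a direct inspection of the straight-path inequalities, analogous to the argument in \cite[Section 6]{Fs1}, that this image is precisely the cone ${\sf G}_n^\flat$ described by \eqref{eq:cone_sp} applied to the vertices of $\Delta_n^\flat$. Combined with the polyhedrality of the model, this identifies $\dim \SI_{\beta_n'}(SF_n)_\sigma$ with the lattice-point count $|{\sf G}_n^\flat(\sigma) \cap \mb{Z}^{(\Delta_n^\flat)_0}|$. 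The proposition then follows by invoking the identification $\dim \SI_{\beta_n'}(SF_n)_\sigma = K_{\lambda(\sigma)}^{\mu(\sigma)}$ stated in the paragraph preceding the proposition (which is proved either directly, mimicking \cite[Proposition 1]{DW1} with King's stability criterion, or via Theorem \ref{T:SIemb} by reducing the relevant Littlewood-Richardson coefficient to a Kostka number).

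The main obstacle is the cone identification in the middle step. The straight-path inequalities are easy to write down, but one must check both that all defining inequalities of the projected cone are captured by \eqref{eq:cone_sp} on $\Delta_n^\flat$, and that the projection $\ll_{\b{v}}$ is surjective onto the lattice points of ${\sf G}_n^\flat(\sigma)$, so that counting is preserved rather than merely bounded. This relies on the rigidity of the hive inequalities and is essentially the content of the argument in \cite[Section 6]{Fs1}, adapted to the subquiver $\Delta_n^\flat$; once established, the remaining pieces are immediate consequences of results already proven in the paper.
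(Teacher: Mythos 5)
Your proposal is correct and follows essentially the same route as the paper: apply Corollary \ref{C:model_proj} iteratively along the exceptional sequence $\mb{F}=\{\vep_{i,n-i}\}$ to get a polyhedral model for $\SI_{\beta_n'}(SF_n)$, identify the projected cone with ${\sf G}_n^\flat$ via the straight-path inequalities \eqref{eq:cone_sp} as in \cite[Section 6]{Fs1}, and combine with the identification $\dim \SI_{\beta_n'}(SF_n)_\sigma = K_{\lambda(\sigma)}^{\mu(\sigma)}$ (via \cite[Proposition 1]{DW1} or Theorem \ref{T:SIemb}). The only blemish is the indexing slip $\bs{\sigma}_n(0,n-i)$, which should be $\bs{\sigma}_n(i,n-i)=\f_{i,n-i}$, the last row of frozen vertices of $\Delta_n$.
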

\end{example}

\begin{example}[Example \ref{ex:ucell} continued]
By Corollary \ref{C:model_proj}, $G(\Delta_n^{\flat\flat};\bs{\sigma}_n^{\flat\flat})$ is given by lattice points in some polyhedral cone denoted by ${\sf{G}}_n^{\flat\flat}$.
Since the semi-invariant ring is just a polynomial ring in $N=\binom{n}{2}$ variables, we see that the cone is generated by $N$ extremal rays.
Moreover, the cone ${\sf{G}}_n^{\flat\flat}$ can also be described by \eqref{eq:cone_sp}, and we denote this hyperplane representation by $\g H_n \geq 0$.
The cone is related to {\em Kostant's vector partition function} $k_A$ \cite[24.1]{Hu} in type $A$ as follows.
Recall that $k_A(\lambda)$ counts the lattice points in the polytope $${\sf K}_A(\lambda):=\{ \h\in \mb{Z}_{\geqslant 0}^N \mid \h\Phi_n=\lambda \},$$
where $\Phi_n$ is the matrix form by rows of positive roots in type $A_{n-1}$.
It is easy to verify by linear algebra that $H_n\Phi_n=\bs{\sigma}_n^{\flat\flat}$, so we have that ${\sf G}_n^{\flat\flat}(\sigma) = {\sf K}_A(\lambda(\sigma))$,
where $\lambda(\sigma)=\g H_n^{-1}\bs{\sigma}_n^{\flat\flat}$ if $\sigma=\g\bs{\sigma}_n^{\flat\flat}$.
Since $H_n$ is {\em totally unimodular}, the two polytope have the same number of lattice points.

\begin{question} Recall that Kostant's formulae \cite[24.2]{Hu}
$$K_{\lambda}^{\mu} = \sum_{\omega\in S_n} (-1)^{\vep(\omega)} k_A(\omega\cdot\lambda-\mu)$$
relates any Kostka number to an alternating sum of values in the partition function, and Steinberg's formulae \cite[24.4]{Hu}
$$c_{\mu,\nu}^\lambda = \sum_{\omega\in S_n} (-1)^{\vep(\omega)} K_{\nu}^{\omega\cdot\lambda-\mu}$$
relates any Littlewood-Richardson coefficient to an alternating sum of Kostka numbers.
Here $\omega\cdot\lambda$ is the dot action of $S_n$ defined by
$\omega\cdot\lambda = \omega(\lambda+\rho)-\rho$ for $\rho$ the half sum of positive roots in type $A$.
Can we see these two formula from the relations among polytopes ${\sf G}_n^{\flat\flat},{\sf G}_n^{\flat}$, and ${\sf G}_n$?
\end{question}
\end{example}

\begin{example}[Example \ref{ex:6flag} continued]
By Corollary \ref{C:model_proj}, $G(\Delta_n^\triangledown;\bs{\sigma}_n^\triangledown)$ is given by lattice points in some polyhedral cone denoted by ${\sf G}_n^\triangledown$.
By a similar calculation in \cite[Section 6]{Fs1}, we find that the cone ${\sf{G}}_n^\triangledown$ can also be described by \eqref{eq:cone_sp}.
Similar to \cite[Proposition 1]{DW1}, the dimension of each graded piece $\SI_{\beta_n^6}(S_n^6)$ counts certain {\em multiple} Littlewood-Richardson coefficients $c_{\bs{\eta}}^\lambda$ (\cite[Section 1]{Fk1}), where $\bs{\eta}=(\eta_1,\eta_2,\dots,\eta_5)$ and $\lambda$ are determined by the weight $\sigma$ similar to \cite[7.1]{DW2}.

\begin{proposition} \label{P:multipleLR} The multiple Littlewood-Richardson coefficients $c_{\bs{\eta}(\sigma)}^{\lambda(\sigma)}$ is an ordinary Littlewood-Richardson coefficient $c_{\mu',\nu'}^{\lambda'}$, which is counted by the lattice points in ${\sf G}_n^\triangledown(\sigma)$.
\end{proposition}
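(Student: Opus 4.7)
The strategy combines two ingredients. First, by Corollary~\ref{C:model_proj} applied iteratively to the exceptional sequence $\mb{F}=\{\vep_{0,n},\vep_{n,0},\vep_{n,n}\}$ starting from the hive model $(\Delta_{2n},W_{2n})$ of $\SI_{\beta_{2n}}(T_{2n})$, the IQP $(\Delta_n^\triangledown,W_n^\triangledown)$ models $\SI_{\beta_n^6}(S_n^6)$ polyhedrally. Granting the preceding assertion that ${\sf G}_n^\triangledown$ is cut out by the subpath inequalities \eqref{eq:cone_sp} on $\Delta_n^\triangledown$, the dimension of the weight-$\sigma$ component of $\SI_{\beta_n^6}(S_n^6)$ then equals $|{\sf G}_n^\triangledown(\sigma)\cap\mb{Z}^{(\Delta_n^\triangledown)_0}|$, supplying the polytopal lattice-point count claimed in the proposition.

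Second, iterating Theorem~\ref{T:SIemb} along $\mb{F}$ produces a weight-preserving isomorphism
$$\SI_{\beta_n^6}(S_n^6)_{\sigma}\;\cong\;\SI_{\beta_{2n}}(T_{2n})_{\iota_\mb{F}(\sigma)},$$
where $\iota_\mb{F}$ is the linear isometry from \eqref{eq:dimemb}. The right-hand side, being a graded piece of a triple-flag semi-invariant ring, has dimension equal (by the Knutson-Tao/Derksen-Weyman interpretation recalled just before Proposition~\ref{P:Kostka}) to the ordinary Littlewood-Richardson coefficient $c_{\mu',\nu'}^{\lambda'}$, where $\mu',\nu',\lambda'$ are extracted from the triple-flag weight $\iota_\mb{F}(\sigma)$ via the recipe in \cite[7.1]{DW2}. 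Combining this with the intrinsic interpretation of $\dim\SI_{\beta_n^6}(S_n^6)_\sigma$ as the multiple Littlewood-Richardson coefficient $c_{\bs{\eta}(\sigma)}^{\lambda(\sigma)}$ (which is how the sextuple-flag invariants decompose into tensor products of Schur modules, in analogy with \cite[Proposition~1]{DW1}) yields both the collapse $c_{\bs{\eta}(\sigma)}^{\lambda(\sigma)}=c_{\mu',\nu'}^{\lambda'}$ and the lattice-point formula.

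The main obstacle will be justifying the first ingredient: verifying that the projected cone ${\sf G}_n^\triangledown$ admits the uniform description \eqref{eq:cone_sp} on the restricted quiver, rather than inheriting a more intricate facet structure through the threefold forgetful projection $\ll_\mb{F}$ from ${\sf G}_{2n}$. Concretely, one must check that every strict subpath inequality in $\Delta_{2n}$ involving one of the deleted vertices $(n,0),(0,n),(n,n)$ is either trivially satisfied on $\ker \ep_l$ or already implied by a subpath inequality of $\Delta_n^\triangledown$ --- a finite case-check following the hive-cone analysis in \cite[Section~6]{Fs1}. The explicit computation of $\mu',\nu',\lambda'$ from $\iota_\mb{F}(\sigma)$, given the formula for $\bs{\sigma}_n^\triangledown(i,j)$ in Example~\ref{ex:6flag}, is then a routine rewriting in terms of the partition data associated to the standard triple flag weight $\iota_\mb{F}(\sigma)=\sum_{(i,j)}\sigma(i,j)\,\vep_{i,j}$.
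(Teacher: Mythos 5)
Your proposal is correct and follows essentially the same route as the paper: the lattice-point count comes from Corollary \ref{C:model_proj} applied to the projection from $(T_{2n},\beta_{2n})$ through $\mb{F}$ (with the cone description \eqref{eq:cone_sp} checked as in \cite[Section 6]{Fs1}), and the collapse of the multiple Littlewood--Richardson coefficient to an ordinary one comes from the weight-preserving identification $\SI_{\beta_n^6}(S_n^6)_{\sigma}\cong\SI_{\beta_{2n}}(T_{2n})_{\iota_{\mb{F}}(\sigma)}$ of Theorem \ref{T:SIemb}, exactly parallel to the Kostka-number discussion following Proposition \ref{P:Kostka}. The obstacle you flag (the facet structure of the projected cone) is precisely the point the paper disposes of by the cited calculation.
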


\end{example}

\subsection{Vertex Removal} \label{S:VR}

Suppose that $\b{u}$ is attached to $\b{v}$, and $(\Delta,\b{x};\bs{\sigma})$ is a graded seed.
If we restrict the seed to $\Deltauv$, we get a new graded seed $(\Deltauv,\b{x}(\hat{\b{v}});\bs{\sigma}(\hat{\b{v}}))$ as in Section \ref{s:notin}.
Denote $J_{\b{v}}:=J(\Delta_{\b{v}},W_{\b{v}})$.
Recall that in the situation of Proposition \ref{P:outerproj}, we have an epimorphism $p_{\b{v}}: \uca(\Delta;\bs{\sigma}) \twoheadrightarrow \uca(\Deltauv;\bs{\sigma}(\hat{\b{v}}))$.
The proof of the following lemma, theorem, and corollary is completely analogous to (but easier than)
that of Lemma \ref{L:commute_e}, Theorem \ref{T:frozen_model}, and Corollary \ref{C:model_proj},
so we leave the details to the readers.

\begin{lemma} We have the following commutative diagram under the assumption of Proposition \ref{P:outerproj}.
$$\vcenter{\xymatrix@C=5ex{
\mc{R}ep^{\g,\mu}(J) \ar@{^(->}[r]^{C} \ar@{->>}[d]_{\res_{\b{v}}} & \uca(\Delta,\b{x};\bs{\sigma}) \ar@{->>}[d]_{p_{\b{v}}} \\
\mc{R}ep^{\g,\mu}(J_{\b{v}}) \ar@{^(->}[r]^{C\quad} & \uca(\Deltauv,\b{x}(\hat{\b{v}});\bs{\sigma}(\hat{\b{v}}))  
}}$$
\end{lemma}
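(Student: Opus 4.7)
The plan is to imitate the proof of Lemma \ref{L:commute_e}, with the triple $(e,\ell_e,\ind_e)$ replaced throughout by $(\b{v},p_{\b{v}},\res_{\b{v}})$. The surjectivity of $p_{\b{v}}:\uca(\Delta,\b{x};\bs{\sigma})\twoheadrightarrow\uca(\Deltauv,\b{x}(\hat{\b{v}});\bs{\sigma}(\hat{\b{v}}))$ is already immediate from Proposition \ref{P:outerproj}. The analog of Lemma \ref{L:projB} that is needed -- namely $p_{\b{v}}(\hat{y}_u)=\hat{y}_u'$ for each $u$ mutable in $\Deltauv$, where $\hat{y}_u'$ denotes the $y$-variable associated to $(\Deltauv,\b{x}(\hat{\b{v}}))$ -- was asserted in the paragraph preceding the lemma and will be verified by observing that $B(\Deltauv)$ is obtained from $B(\Delta)$ by striking out the $\b{u}$-rows (now frozen) and the $\b{v}$-columns (now deleted), so that the substitution $p_{\b{v}}:\b{x}(v)\mapsto 0$ acts on $\hat{y}_u=\b{x}^{-b_u}$ precisely by trimming off the $\b{v}$-entries of $b_u$.

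Next I would use that $\b{v}$ consists of frozen vertices of $\Delta$ and that $\mc{M}\in\mc{R}ep^{\g,\mu}(J)$ is $\mu$-supported, so $M(v)=0$ for every $v\in\b{v}$. Thus $\res_{\b{v}}(\mc{M})$ has the same underlying vector spaces and structure maps as $\mc{M}$ at every vertex of $\Delta_0\setminus\b{v}$, giving the identity
$\Gr^{\e}(M)=\Gr^{\e}(\res_{\b{v}}(M))$
for each dimension vector $\e$ supported on $\Delta_0\setminus\b{v}$. A direct application of the dimension formula \eqref{eq:delta2dim} then yields $\g(\res_{\b{v}}(\mc{M}))=\pp_{\b{v}}(\g(\mc{M}))$, since the local data defining $\gamma_u$ at each $u\in\hat{\b{v}}$ is unaffected by deletion of the trivial spaces at $\b{v}$.

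Finally I would assemble these pieces. By the Laurent phenomenon applied to $\uca(\Delta,\b{x})$, the element $C(\mc{M})$ is polynomial in each frozen variable $\b{x}(v)$, so the substitution $p_{\b{v}}$ is well-defined on the full sum $\b{x}^{\g(\mc{M})}\sum_\e\chi(\Gr^\e(M))\hat{\b{y}}^\e$ even when individual Laurent monomials in its expansion carry formal negative powers of $\b{x}(v)$ that cancel only in the sum. Combining the $y$-variable identity of the first step with the $\g$-vector and Grassmannian identities of the second step, I would conclude
$$p_{\b{v}}(C(\mc{M}))=\b{x}(\hat{\b{v}})^{\pp_{\b{v}}(\g(\mc{M}))}\sum_\e\chi(\Gr^\e(\res_{\b{v}}(M)))\,(\hat{\b{y}}')^\e=C(\res_{\b{v}}(\mc{M})).$$

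The main obstacle I foresee is controlling the cancellations in the Laurent expansion of $C(\mc{M})$ under $p_{\b{v}}$: individual monomials $\b{x}^{\g(\mc{M})-\e B}$ may have negative $\b{v}$-exponents and thus do not by themselves survive the substitution, and one must invoke the Laurent phenomenon to pass to a polynomial form where $p_{\b{v}}$ is unambiguous. The attached condition on $\b{u}$ is used precisely to ensure that, after applying $p_{\b{v}}$, the remaining $y$-variables at vertices $u\in\hat{\b{u}\cup\b{v}}$ organize cleanly into the $\hat{\b{y}}'$ of $(\Deltauv,\b{x}(\hat{\b{v}}))$; once this bookkeeping is in place, the computation collapses to the asserted identity and the proof is strictly easier than that of Lemma \ref{L:commute_e} because $p_{\b{v}}$ is just an evaluation rather than a twisted normalization.
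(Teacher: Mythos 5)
Your proposal matches the paper's intended argument: the paper explicitly defers to the proof of Lemma \ref{L:commute_e} ("completely analogous to, but easier than"), and you carry out exactly that substitution, using $p_{\b{v}}(y_u)=y_u'$ (asserted before Corollary \ref{C:Gprojv}) in place of Lemma \ref{L:projB}, the identity $\Gr^{\e}(M)=\Gr^{\e}(\res_{\b{v}}(M))$ coming from $\mu$-supportedness, and the compatibility of $\g$-vectors via \eqref{eq:delta2dim}. The only caveat, which you flag but do not fully discharge and which the paper also glosses over, is that the $y$-variable identity and the termwise computation quietly rely on $b_{u,v}=0$ for every mutable $u\notin\b{u}\cup\b{v}$ and $v\in\b{v}$ (so that $p_{\b{v}}$ neither annihilates nor leaves undefined the monomials $\b{y}^{\e}$), a fact not explicit in the hypotheses of Proposition \ref{P:outerproj} but implicitly assumed there as well.
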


\begin{theorem}  \label{T:hat} Under the same assumption as Proposition \ref{P:outerproj},
we suppose that $(\Delta,W)$ models $\uca(\Delta;\bs{\sigma})$.
Then $(\Delta_{\b{v}},W_{\b{v}})$ models $\uca(\Deltauv;\bs{\sigma}(\hat{\b{v}}))$.
\end{theorem}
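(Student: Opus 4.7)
The strategy is to follow the template of Theorem~\ref{T:frozen_model} step by step, with the commutative square of the preceding lemma replacing that of Lemma~\ref{L:commute_e}, the projection $p_{\b{v}}$ replacing $\ell_e$, the linear map $\pp_{\b{v}}$ replacing $\ll_e$, and $\uca(\Delta;\bs{\sigma})_{\theta^\perp}$ replacing $\uca(\Delta;\bs{\sigma})_{\ep^\perp}$.

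First I would use the modeling hypothesis to extract a subset $\mc{R}\subset\mc{R}ep^{\g,\mu}(J)$ such that $C(\mc{R})$ is a basis of $\uca(\Delta;\bs{\sigma})$ and each $\mc{M}\in\mc{R}$ is represented by a sufficiently general presentation in its $\PHom_J$-space; the remark at the end of Section~\ref{s:resind}, applied with $\res_{\b{v}}$ in place of $\ind_e$, ensures that this genericity is preserved by the vertical functor of the preceding lemma. Isolating the subfamily $\mc{R}_\theta:=\{\mc{M}\in\mc{R}\mid\innerprod{\theta,\g(\mc{M})\bs{\sigma}}=0\}$ and using the homogeneity of cluster characters yields a basis $C(\mc{R}_\theta)$ of $\uca(\Delta;\bs{\sigma})_{\theta^\perp}$. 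By Proposition~\ref{P:outerproj} this graded piece coincides with $\uca(\Deltauv;\bs{\sigma}(\hat{\b{v}}))$, on which $p_{\b{v}}$ acts as the identity; pushing $C(\mc{R}_\theta)$ through the commutative diagram then produces a basis $C(\res_{\b{v}}(\mc{R}_\theta))=p_{\b{v}}(C(\mc{R}_\theta))$ of $\uca(\Deltauv;\bs{\sigma}(\hat{\b{v}}))$, whose $\g$-vectors are distinct because each equals $\pp_{\b{v}}(\g(\mc{M}))$ and $\pp_{\b{v}}$ is injective on the relevant weight slice of $G(\Delta)$ by the analogue of Corollary~\ref{C:Gprojv}.

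Next I would identify each $C(\res_{\b{v}}(\mc{M}))$ with the value $C_{W_{\b{v}}}(\pp_{\b{v}}(\g(\mc{M})))$ of the generic character. Since the quotient map $J\twoheadrightarrow J_{\b{v}}$ sends a sufficiently general presentation in $\PHom_J(\g(\mc{M}))$ to one in $\PHom_{J_{\b{v}}}(\pp_{\b{v}}(\g(\mc{M})))$, the corresponding cokernels are generic on both sides, so the Euler-characteristic expansion in Definition~\ref{D:CC} returns the same polynomial. The resulting $\g$-vectors lie in $G(\Delta_{\b{v}},W_{\b{v}})$ because the cokernel restricted to $\Deltauv$ remains $\mu$-supported. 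For the reverse inclusion I would mimic the lift-and-shift argument of the last paragraph of the proof of Theorem~\ref{T:frozen_model}: given $\g\in G(\Delta_{\b{v}},W_{\b{v}})$, one lifts $\i_{\b{v}}(\g)$ to a vector in $G(\Delta,W)$ by adjoining sufficiently large non-negative multiples of $\e_v$ for $v\in\b{v}$, which preserves the $\mu$-support and keeps the image under $\pp_{\b{v}}$ equal to $\g$.

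I expect this lifting step to be the main obstacle: one must ensure simultaneously that the shifted vector is $\mu$-supported (so it belongs to $G(\Delta,W)$) and that its weight lies in $\theta^\perp$ (so the lift falls into $\mc{R}_\theta$). Both conditions are controllable precisely because all $\innerprod{\theta,\bs{\sigma}(v)}$ for $v\in\b{v}$ share a common sign, so a single monotone shift in the $\b{v}$-coordinates can achieve both simultaneously; once this is secured, the linear-independence and spanning arguments carry over from Theorem~\ref{T:frozen_model} verbatim to show that $(\Delta_{\b{v}},W_{\b{v}})$ models $\uca(\Deltauv;\bs{\sigma}(\hat{\b{v}}))$.
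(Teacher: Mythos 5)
Your overall template is the right one: the paper's proof of this theorem is literally ``completely analogous to (but easier than)'' that of Theorem \ref{T:frozen_model}, and your first two paragraphs carry out the translation correctly --- restrict the modeling basis to the weight slice $\theta^\perp$, identify $\mc{A}_{\theta^\perp}$ with $\uca(\Deltauv;\bs{\sigma}(\hat{\b{v}}))$ via Proposition \ref{P:outerproj}, push through the commutative square, and use the genericity remark at the end of Section \ref{s:resind} to recognize the images as values of the generic character. One point you pass over too quickly in the forward direction: ``$\mu$-supported'' for $\Deltauv$ is strictly stronger than for $\Delta$, because the vertices $\b{u}$ are newly frozen; that $\res_{\b{v}}(\mc{M})$ is not supported on $\b{u}$ is precisely what the commutative-diagram lemma must guarantee, so you should make explicit that you are leaning on it there rather than on the bare statement ``the restricted cokernel remains $\mu$-supported.''

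The genuine gap is in your last paragraph. The lift-and-shift step cannot work, and your justification for it is exactly backwards. If you replace $\i_{\b{v}}(\g)$ by $\tilde{\g}=\i_{\b{v}}(\g)+\sum_{v\in\b{v}}m_v\e_v$ with $m_v\geq 0$, the weight changes by $\sum_{v}m_v\bs{\sigma}(v)$, so $\innerprod{\theta,\tilde{\g}\bs{\sigma}}=\sum_{v\in\b{v}}m_v\innerprod{\theta,\bs{\sigma}(v)}$; since all $\innerprod{\theta,\bs{\sigma}(v)}$ share one sign, this vanishes only when every $m_v=0$. The common-sign hypothesis is thus exactly what makes ``not supported on $\b{v}$ after the shift'' and ``weight still in $\theta^\perp$'' mutually exclusive, not simultaneously achievable. (The shift trick is legitimate in Theorem \ref{T:frozen_model} because $\ell_e$ merely divides by a power of $e$ and is injective on homogeneous elements, whereas $p_{\b{v}}$ annihilates every homogeneous element whose weight lies outside $\theta^\perp$.) Fortunately no lifting is needed --- this is why the present case is the ``easier'' one: $\b{v}$ is already frozen, so only the first half of the proof of Theorem \ref{T:frozen_model} has an analogue. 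To see that the generic character carries all of $G(\Delta_{\b{v}},W_{\b{v}})$ onto a basis, observe that its image consists of elements of $\uca(\Deltauv;\bs{\sigma}(\hat{\b{v}}))$ with distinct well-defined $\g$-vectors, hence is linearly independent by Lemma \ref{L:independent} (equivalently, use Corollary \ref{C:Gprojv}); since you have already exhibited inside it a basis, namely $C_{W_{\b{v}}}(\pp_{\b{v}}(\g(\mc{R}_\theta)))$, a linearly independent set containing a basis must coincide with it. Replacing your final paragraph by this observation closes the proof.
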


\begin{corollary} \label{C:removal_model} Under the same assumption as Theorem \ref{T:removal},
we suppose that the semi-invariant ring $\SI_\beta(Q)$ is modeled by $(\Delta,W)$.
Then $\SI_{\br{\beta}}(\br{Q})$ is modeled by $(\Deltauv,W_{\b{v}})$.
\end{corollary}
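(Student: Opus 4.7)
The plan is to observe that Corollary \ref{C:removal_model} is essentially an immediate consequence of Theorem \ref{T:removal} combined with Theorem \ref{T:hat}, and that the only thing that needs checking is compatibility of the hypotheses. Concretely, I would first note that the assumption of Theorem \ref{T:removal} -- that $\bs{\sigma}(v)(r) = 0$ for every $v\in\Delta_0$ except on a set of frozen vertices $\b{v}$, where all values $\bs{\sigma}(v)(r)$ have the same (strict) sign -- is literally the hypothesis of Proposition \ref{P:outerproj} applied to the weight $\theta$ dual to the unit vector $\e_r$ under the unimodular pairing $\innerprod{-,-}$ on $\mb{Z}^{Q_0}$. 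In particular, $\b{u}$ attached to $\b{v}$ makes sense and is the same $\b{u}$ in both statements. Thus the setup of Theorem \ref{T:hat} is in force.

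Next, by Theorem \ref{T:removal}, the vertex-removal map induces a natural isomorphism
\[
\SI_{\br{\beta}}(\br{Q}) \cong \uca(\Deltauv;\bs{\sigma}(\hat{\b{v}})),
\]
and this is precisely the realization as a naturally graded UCA. On the IQP side, the hypothesis that $(\Delta,W)$ models $\SI_\beta(Q)$ means by Definition \ref{D:model} that the generic character $C_W$ maps $G(\Delta,W)$ onto a basis of $\uca(\Delta;\bs{\sigma})\cong \SI_\beta(Q)$. Feeding this into Theorem \ref{T:hat}, we obtain that $C_{W_{\b{v}}}$ sends $G(\Deltauv, W_{\b{v}})$ onto a basis of $\uca(\Deltauv;\bs{\sigma}(\hat{\b{v}}))$.

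Finally, composing these two identifications yields that $(\Deltauv,W_{\b{v}})$ models $\SI_{\br{\beta}}(\br{Q})$, which is exactly the claim. There is no real obstacle here beyond bookkeeping: the substantive work has been done in Theorem \ref{T:removal} (which supplies the cluster-algebra side) and Theorem \ref{T:hat} (which supplies the transfer of the IQP model), and the only concern is ensuring the two sets of hypotheses line up -- in particular that the full-rank condition on $B(\Delta)$, the sign condition on $\bs{\sigma}(v)(r)$ for $v\in\b{v}$, and the attachment of $\b{u}$ to $\b{v}$ are shared. All of these are built into the assumption of Theorem \ref{T:removal}, so the corollary follows. As remarked after Theorem \ref{T:frozen_model}, the polyhedrality of the model is likewise inherited, since the map $\pp_{\b{v}}$ of Corollary \ref{C:Gprojv} is totally unimodular.
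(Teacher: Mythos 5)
Your proof is correct and takes essentially the same route the paper intends: the paper derives Corollary \ref{C:removal_model} exactly as the stated analogue of Corollary \ref{C:model_proj}, i.e., by combining Theorem \ref{T:removal} with Theorem \ref{T:hat}, after noting that the hypotheses of Theorem \ref{T:removal} instantiate those of Proposition \ref{P:outerproj} with $\theta$ the weight corresponding to $\e_r$. Your hypothesis bookkeeping and the closing remark on polyhedrality match the paper's own remarks.
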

\noindent We are not going provide more examples for this part. Good examples for illustration are contained in \cite[Section 10]{Fk1}.

\bibliographystyle{amsplain}

\end{document}